\documentclass[a4paper,12pt,reqno]{amsart}
\usepackage[utf8]{inputenc}
\usepackage{lmodern,amsmath,amssymb,mathtools,amsthm}
\usepackage{amsthm}
\usepackage{wasysym, enumerate, hyperref}
\usepackage{graphics,graphicx,geometry}
\usepackage{xcolor}
\usepackage{mathrsfs}
\usepackage{cleveref}
\usepackage{floatrow,morefloats}
\usepackage{epsfig}
\usepackage{epstopdf}
\usepackage{caption,subcaption}
\usepackage{exscale, relsize}
\usepackage{mathtools}
\usepackage{multirow}
\usepackage{ifthen}
\usepackage{siunitx}

\DeclarePairedDelimiter\floor{\lfloor}{\rfloor}
\usepackage{float}
\usepackage{placeins}
\usepackage{booktabs}
\usepackage{subcaption}

\numberwithin{equation}{section}
\newtheorem{definition}{Definition}[section]

\newtheorem{lemma}{Lemma}[section]
\newtheorem{theorem}{Theorem}[section]
\newtheorem{proposition}{Proposition}
\newtheorem{corollary}{Corollary}[section]
\newtheorem{remark}{Remark}[section]

\title{ Sobolev Orthogonal Polynomials on the unit circle with varying inner product
}

\author{Sourav Majumder$^\dagger$}
\address{$^\dagger$Department of Mathematics\\ Indian Institute of Technology, Roorkee-247667, Uttarakhand, India}
\email{souravmajumder93@gmail.com, sourav\_m@ma.iitr.ac.in}

\author{A. Swaminathan$^\ddagger$}{\thanks{$^{\#}$Corresponding author}}
\address{$^\ddagger$Department of Mathematics\\ Indian Institute of Technology, Roorkee-247667, Uttarakhand, India}
\email{mathswami@gmail.com, a.swaminathan@ma.iitr.ac.in}
\allowdisplaybreaks
\bigskip
\begin{document}
	
\subjclass[2020] {Primary 42C05, 41A60; Secondary 33C47, 30C15.}
\keywords{Sobolev orthogonal polynomials on the unit circle; Asymptotic formulas; Bernstein-Christoffel Sobolev orthogonal polynomials; Zeros of orthogonal polynomials; Hessenberg matrices; }
	
	\begin{abstract}
		In this work, we investigate the  validity of the Szeg\"o type strong asymptotic formula for the Sobolev orthogonal polynomials associated with a Sobolev-type inner product depending on a parameter $s$, which is defined as
		\begin{equation*}
			\langle f,g\rangle _{\frac{\lambda}{n^s}}:= \int_{\mathbb{T}} f(z)\overline{g(z)} d\mu_1(z)+ \dfrac{\lambda}{n^s} \int_{\mathbb{T}} f^{\prime}(z)\overline{g^{\prime}(z)} d\mu_2(z), \quad \text{where $s$ is fixed and } \quad s\geq 2,
		\end{equation*}
		and $\lambda >0$.
		Here, $\mu_1$, $\mu_2$ are absolutely continuous with respect to the normalized lebesgue measure  and $\int_{\mathbb{T}} \mu^{\prime}_i(z) dm=1$, where $\mu_i^{\prime}(z)= \dfrac{d\mu_i}{dm} >0$ for $i=1,2$.
		Further,  $\mu_1, \mu_2$ belongs to the Szeg\"o class and also has  non-zero bounded Radon-Nikodym derivatives on the unit circle.
		We observe that the measure appearing in the Szeg\"o function of the strong asymptotic limit depends on $s$. As a specific case, we consider the Bernstein-Christoffel Sobolev orthogonal polynomials and study their properties, including the asymptotic distribution and location of their zeros for different values of $s$. We further establish a spectral characterization of their zeros as the eigenvalue of an explicitly constructed modification of the associated Hessenberg matrix and provide numerical evidence supporting the obtained asymptotic and zeros results.
	\end{abstract}
	
	\maketitle
	\markboth{Sourav Majumder and A. Swaminathan}{Strong Asymptotics}

	

	\section{Introduction}
	\subsection{Preliminaries:}
	Let $\mathbb{D}=\{ z\in \mathbb{C}: \left|z\right|<1\}$ be the open unit disc, and let $\mathcal{M}$ be the finite positive Borel measures on the unit circle $\mathbb{T}=\{z\in \mathbb{C} : \left|z\right|=1\}$, with infinite points in its support. It is a very well-known fact \cite{Geronimus-Book, OPUCVol1-Simon Barry, Szego-book} that there exists a sequence of polynomials $\{\phi_n\}_{n\geq 0}$, with deg $\phi_n= n$,  such that
	\begin{align*}
		\int_{\mathbb{T}}\phi_m(z,d\mu)\overline{\phi_n(z,d\mu)} d\mu(z)=\delta_{m,n},
	\end{align*}
	where $\delta_{m,n}$ is the Kronecker delta.  $\{\phi_n(z,d\mu)\}_{n\geq 0}$ is said to be the sequence of orthonormal polynomials with respect to the measure $d\mu$, and we denote it  by
	\begin{align*}
		\phi_n(z;d\mu)= \kappa_n(d\mu)z^n+ \text{lower degree terms}, \quad \text{where}\quad\kappa_n(d\mu)>0.
	\end{align*}
	The corresponding sequence of monic orthogonal polynomials  $ \kappa_n(d\mu)^{-1} \phi_n(z;d\mu)$ will be denoted by $\left\{\Phi_n(z,d\mu)\right\}_{n \geqslant 0}$. They satisfy the following forward and backward recurrence relations \cite{OPUCVol1-Simon Barry,Szego-book},
$$
	\begin{aligned}
		& \Phi_{n+1}(z,d\mu)=z \Phi_n(z,d\mu)+\Phi_{n+1}(0,d\mu) \Phi_n^*(z,d\mu), \quad n \geqslant 0, \\
		& \Phi_{n+1}(z,d\mu)=\left(1-\left|\Phi_{n+1}(0,d\mu)\right|^2\right) z \Phi_n(z,d\mu)+\Phi_{n+1}(0,d\mu) \Phi_{n+1}^*(z,d\mu), \quad n \geqslant 0,
	\end{aligned}
$$
where $\Phi_n^*(z,d\mu)=z^n \overline{\Phi_n(1 / \bar{z},d\mu)}$, is the reversed polynomial, and $\left\{-\overline{\Phi_{n+1}(0,d\mu)}\right\}_{n \geqslant 0}$ are the so-called Verblunsky coefficients or reflection coefficients. These verblunsky coefficients play an important role in the theory of orthogonal polynomials on the unit circle (\textbf{OPUC}) as they completely determine the whole sequence $\{ \Phi_{n}(z, d\mu)\}$.
	\noindent From  the literature of $\bf{OPUC}$, we consider two important families of measures given by the following:
\begin{definition}
		 $\mu$ belongs to the Nevai class $\mathcal{N}$ \cite{JAT1982-MNT-Remarks on Rakhmanov's paper, Constructive approximation1987- Mate NevaiTotik- extension of Szego theory}, if
\begin{align*}
			\lim _{n \rightarrow \infty}\left|\Phi_{n+1}(0,d\mu)\right|=0.
		\end{align*}
\end{definition}
\begin{definition}
		$\mu$ belongs to the Szeg\"o class $\mathcal{S}$  \cite{Geronimus-Book, OPUCVol1-Simon Barry},  if
		\begin{align*}
			\log\mu^{'}\in L^1[0,2\pi].
		\end{align*}
\end{definition}
On the other hand, if $\mu \in \mathcal{S}$, then the Szeg\"o function, $D\left( \mu^{\prime},z,d\mu\right)$ is defined by
\begin{align*}
		D\left(\mu^{\prime},z;d\mu\right)=\exp \left(\dfrac{1}{4 \pi} \int_{0}^{2\pi} \dfrac{\mathrm{e}^{\mathrm{i} \theta}+z}{\mathrm{e}^{\mathrm{i} \theta}-z} \log \mu^{\prime}(\theta)  d\theta \right) , \quad z\in \mathbb{D}
\end{align*}
and
\begin{align}\label{sa 1.1}
		\lim_{n \to \infty} \kappa_n(d\mu)= \exp\left( -\dfrac{1}{4 \pi}\int_{0}^{2\pi} \log \mu^{'}(\theta) d\theta\right)= D\left(\mu^{\prime},0;d\mu\right)^{-1}.
\end{align}
We provide another notion of Nevai class here \cite{JAT2011-MartinezSimon-L2DerivativeOPUC}. If $\mu \in \mathcal{S}$, then it has normal $L^2$-derivative behaviour, i.e.
\begin{align*}
		\lim _{n \rightarrow \infty}\left(\int \dfrac{\left|\phi_n^{\prime}\left(\mathrm{e}^{\imath \theta}, d\mu \right)\right|^2}{n^2}  d\mu(\theta)\right)^{1 / 2}=1
\end{align*}
and hence $\mu \in \mathcal{N}$. On the other hand, if $\mu \in \mathcal{N}$, then
\begin{align*}
		\left|\dfrac{\Phi_{n}(z,d\mu)}{\Phi_{n-1}(z,d\mu)} -z\right| \leq \left|\Phi_{n}(0,d\mu)\right|, \quad \left|z\right|\geq 1.
\end{align*}
Thus, $\displaystyle\lim _{n \rightarrow \infty}\left(\dfrac{\Phi_{n}(z,d\mu)}{\Phi_{n-1}(z,d\mu)}\right) = z$, uniformly on compact subsets of $\left|z\right| >1$. In this case, the asymptotics was obtained under much weaker conditions. A well-known result \cite{Mat.sb1982- E.A. Rakhmanov-Ratio asymptotics} says that any measure that satisfies $\mu^{'}(z)>0 $ almost everywhere for $z \in \mathbb{T}$, satisfies $\displaystyle\lim_{n \to \infty}|\Phi_{n+1}(0,d\mu)| =0$. Now if $\mu \in \mathcal{S} $, there is a well known asymptotic formula for $\phi_n(z;d\mu)$ and $\phi_n^{*}(z,d\mu)$ given by
\begin{align*}
		\lim_{n \to \infty} \dfrac{\phi_n(z,d\mu)}{z^n} = \overline{D\left(\mu^{'}, \bar{z}^{-1}; d\mu\right)^{-1}} \quad \text{ uniformly for } |z|\geq R>1
\end{align*}
and
\begin{align*}
		\lim_{n \to \infty} \phi_n^{*}(z,d\mu) = D(\mu^{'}, z ; d\mu)^{-1}
\end{align*}
uniformly on compact subsets of $\mathbb{D}$ \cite{Szego-book}. Later, in \cite{Constructive approximation1987- Mate NevaiTotik- extension of Szego theory}, these results are extended  for a more general case. In fact, it is shown that, if $\mu \in \mathcal{N} $, $g$ be a nonnegative $d \mu$-integrable function, and  there exists a polynomial $\tilde{Q}$ such that $\tilde{Q} g$ and $\tilde{Q} g^{-1}$ are Riemann integrable, then for $ d\nu = g d \mu$, it is shown that  $\nu \in \mathcal{N}$. Moreover, given $0<r<1$, it is  proved that
\begin{align*}
		\lim _{n \rightarrow \infty} \dfrac{\phi_n^*( z,d \nu)}{\phi_n^*(z,d \mu)}=D\left(g^{-1}, z,d\nu\right), \quad \text{uniformly for }\quad |z| \leq r,
\end{align*}
and
\begin{align*}
\lim _{n \rightarrow \infty} \dfrac{\phi_n(z,d\nu)}{\phi_n(z,d\mu)}=\overline{D\left(g^{-1}, \bar{z}^{-1},d\nu\right)}
\end{align*}
holds uniformly for $|z| \geq s>1.$
	
\subsection{Literature survey and motivation:}
Over the last three decades, considerable attention has been devoted to extending Szeg\"{o}s theory of strong asymptotics for orthogonal polynomials to the framework of Sobolev orthogonal polynomials, which are orthogonal with respect to a Sobolev type inner product of the form
\begin{align}\label{sa 1.2}
		\langle f,g\rangle _{s}:= \int_{\Gamma} f(z)\overline{g(z)} d\mu_1(z)+  \int_{\Gamma} f^{\prime}(z)\overline{g^{\prime}(z)} d\mu_2(z),
\end{align}
where each $\mu_i$, $i=1,2$ is a finite positive Borel measure supported on a Jordan curve or arc.

The first significant contribution in this direction is given in  \cite{JCAM1998- Martinez-Asymptotics of the sobolev op} (see also \cite{constuctive approx 2000- Martinez- Bernstein Szego thm for sobolev op}), where a Szeg\"{o} type strong asymptotic formula is established for the corresponding Sobolev orthogonal polynomials with respect to the inner product \eqref{sa 1.2}. However, in these works \cite{JCAM1998- Martinez-Asymptotics of the sobolev op, constuctive approx 2000- Martinez- Bernstein Szego thm for sobolev op}, both $\mu_1$ and $\mu_2$ are measures taken from the Szeg\"{o} class. Here, the Szeg\"{o} function appearing  in the asymptotic expression depends solely on the second measure $\mu_2$. Later in \cite{JAT1999- Aptekarev Berriochoa Cachafeiro-Strong asymptotics}, the same asymptotic behaviour is proved on the unit circle without any assumption on $\mu_1$, while the second measure $\mu_2$, is required to satisfy additional smoothness condition stronger than the one given in  used in \cite{JCAM1998- Martinez-Asymptotics of the sobolev op, constuctive approx 2000- Martinez- Bernstein Szego thm for sobolev op} (see also \cite{JCAM2001- Martinez- analytic aspects of sobolev op}). In this situation also  the Szeg\"{o} function appearing  in the asymptotic expression depends on the second measure $\mu_2$. Beyond the results described above, several subsequent developments deserve attention. In 2011, inspired by the works \cite{JAT1999-Alfaro-Martinez et al.- asymptotic for balanced extremal for SOP} and \cite{TAMS2009- Alfaro et al- SOP with balance asymptotics}, the following theorem was established in \cite{JAT2011-MartinezSimon-L2DerivativeOPUC}.
\begin{theorem}{\cite{JAT2011-MartinezSimon-L2DerivativeOPUC}}\label{JAT: Properties of s=2 with same measure}
		Let $\{S_n(z)\}$ be the sequence of monic Sobolev orthogonal polynomials with respect to the inner product
\begin{align}\label{sa 1.3}
			\langle f,g\rangle _{\frac{\lambda}{n^2}}:= \int_{\mathbb{T}} \overline{f(z)}g(z) d\mu(z)+ \dfrac{\lambda}{n^2} \int_{\mathbb{T}} \overline{f^{\prime}(z)}g^{\prime}(z) d\mu(z)
\end{align}
and $\{ \Phi_n(z;d\mu)\}$ be the sequence of monic orthogonal polynomials with respect to the measure $\mu$. Suppose that
\begin{itemize}
			\item[(a)]  $\mu$ has normal derivative behaviour.
			\item[(b)] $\mu$ is in the Szeg\"o class.
\end{itemize}
Then
\begin{enumerate}[{\rm(1)}]
			\item  $\lim_{n \to \infty}$ $\dfrac{\sigma_n^2}{\| \Phi_{n}(z;d\mu)\|^2} =1+\lambda$, where
			$ \sigma_n = \min \left\{ \|P\|_{\frac{\lambda}{n^2}} \big|  P(z)= z^n+\ldots \right\}$ .
			\item $\lim_{n \to \infty} \| S_n(z)-\Phi_n(z,d\mu)\|^2_{\frac{\lambda}{n^2}}=0$.
			\item  On compact subsets of $\mathbb{C}\setminus \overline{\mathbb{D}}$,
\begin{align*}
\dfrac{S_n(z)}{\Phi_n(z, d\mu)} \to 1,
\end{align*}
uniformly.
\end{enumerate}
\end{theorem}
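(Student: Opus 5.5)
Throughout, $\|\cdot\|$ denotes the norm in $L^2(\mu)$ and $\|\cdot\|_{\frac{\lambda}{n^2}}$ the Sobolev norm of \eqref{sa 1.3}. The plan is to work with extremal properties. Since $S_n$ is monic and Sobolev-orthogonal to all polynomials of lower degree, it minimizes $\|P\|_{\frac{\lambda}{n^2}}$ over monic $P$ of degree $n$, so $\sigma_n=\|S_n\|_{\frac{\lambda}{n^2}}$.

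\emph{Upper bound for (1).} Use $\Phi_n$ as a competitor:
\[
\sigma_n^2\le \|\Phi_n\|^2+\frac{\lambda}{n^2}\|\Phi_n'\|^2 .
\]
By hypothesis (a) together with $\phi_n=\kappa_n\Phi_n$, we have $\|\Phi_n'\|^2/(n^2\|\Phi_n\|^2)\to 1$. Hence $\limsup \sigma_n^2/\|\Phi_n\|^2\le 1+\lambda$.

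\emph{Lower bound for (1).} Write $S_n=\Phi_n+R_{n}$ with $\deg R_n\le n-1$. Then $\|S_n\|^2=\|\Phi_n\|^2+\|R_n\|^2\ge \|\Phi_n\|^2$. For the derivative part we need $\|S_n'\|^2\ge (1+o(1))\,n^2\|\Phi_n\|^2$. Note that $S_n'/n$ is a monic polynomial of degree $n-1$, so its $L^2(\mu)$ norm is at least $\|\Phi_{n-1}\|$. Moreover, by the Szeg\H{o} condition (b) and \eqref{sa 1.1}, $\|\Phi_{n-1}\|/\|\Phi_n\|=\kappa_n/\kappa_{n-1}\to1$. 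This gives
\[
\|S_n'\|^2\ge n^2\|\Phi_{n-1}\|^2=n^2\|\Phi_n\|^2(1+o(1)).
\]
Adding the two pieces yields $\liminf\sigma_n^2/\|\Phi_n\|^2\ge 1+\lambda$, which proves (1).

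\emph{Part (2).} This step is the main obstacle: we must show the extremal is close to $\Phi_n$, not just that the norms agree. The idea is to exploit the slack in the inequalities above. From (1) and the two lower bounds, both
\[
\|R_n\|^2=\|S_n\|^2-\|\Phi_n\|^2=o(\|\Phi_n\|^2)
\]
and
\[
\frac{\lambda}{n^2}\|S_n'\|^2-\lambda\|\Phi_n\|^2=o(\|\Phi_n\|^2)
\]
hold, since each is nonnegative up to $o(1)$ and their sum is $o(1)$. Because $\|\Phi_n\|$ is bounded (Szeg\H{o} class), this gives $\|S_n-\Phi_n\|\to0$.

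For the derivative term, expand $S_n'/n$ in the orthonormal basis $\{\phi_k\}$. Its leading coefficient against $\phi_{n-1}$ is $1/\kappa_{n-1}$, and its squared norm exceeds $1/\kappa_{n-1}^2$ only by $o(1)$. So $S_n'/n-\Phi_{n-1}$ has norm $o(1)$.

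Similarly, $\Phi_n'/n$ has norm $\|\Phi_n\|(1+o(1))$, again by (a) and the ratio $\kappa_n/\kappa_{n-1}\to1$. It has the same leading coefficient, so $\|\Phi_n'/n-\Phi_{n-1}\|=o(1)$ as well. Combining the two estimates by the triangle inequality gives $\frac{1}{n^2}\|S_n'-\Phi_n'\|^2\to0$, and hence (2).

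\emph{Part (3).} Here $S_n-\Phi_n=R_n$ has degree at most $n-1$ and satisfies $\|R_n\|\to0$. For $|z|>1$, the Christoffel--Darboux kernel bound gives
\[
|R_n(z)|\le \|R_n\|\Big(\sum_{k<n}|\phi_k(z)|^2\Big)^{1/2}.
\]
For Szeg\H{o}-class $\mu$, the Szeg\H{o} asymptotics $\phi_k(z)\sim z^k\,\overline{D(\mu',\bar z^{-1})^{-1}}$ hold uniformly on compact subsets $K$ of $|z|>1$. On such $K$ this gives
\[
\Big(\sum_{k<n}|\phi_k(z)|^2\Big)^{1/2}\le C_K\,|z|^n\le C_K'\,|\Phi_n(z)|.
\]
Therefore $|R_n(z)/\Phi_n(z)|\le C\|R_n\|\to0$ uniformly on $K$, which is (3).
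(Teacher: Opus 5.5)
Your proof is correct. The paper does not prove this theorem; it is quoted from Mart\'{\i}nez-Finkelshtein and Simon. The natural comparison is therefore with the paper's own analogous arguments.

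Your part (1) is the same minimal-norm argument as Proposition~1. It uses $\|S_n\|\ge\|\Phi_n\|$ and $\|S_n'/n\|\ge\|\Phi_{n-1}\|$ for the lower bound, and $\Phi_n$ as a competitor for the upper bound. With a single measure, the equivalence constants $C_1,C_2$ there become $1$, and that is what makes the limit exact.

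For (2) and (3) you take a genuinely different route from the paper's $s>2$ analogues.

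For (2), Lemma~2.1 writes the extremal polynomial as $\Phi_n-Q_{n-1}$ and combines Cauchy--Schwarz with Bernstein's inequality to get $\|Q_{n-1}\|_{\frac{\lambda}{n^s}}=O(n^{2-s})$. That bound gives nothing at $s=2$. Your slack argument works differently: two nonnegative excesses have sum $o(1)$, and then Pythagoras in $L^2(\mu)$ is applied to the monic polynomials $S_n'/n$ and $\Phi_n'/n$ against $\Phi_{n-1}$. This exploits the coincidence of the upper and lower bounds in (1), so it genuinely needs the same measure in both terms. That is consistent with Section~3 of the paper: for $\mu_1\neq\mu_2$ only two-sided bounds survive, and the limit is governed by $\mu_2$.

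For (3), Lemma~2.3 passes from $L^2(d\mu_1)$ to $L^2(dm)$ using $\mu_1'\ge c_1>0$, then applies Parseval in the monomial basis. You instead use Cauchy--Schwarz in the orthonormal basis $\{\phi_k\}$, together with the Szeg\"o bounds $|\phi_k(z)|\le C_K|z|^k$ and $|\Phi_n(z)|\ge c_K|z|^n$ on $K$. This needs no lower bound on the weight, which is what is required here, since the theorem assumes only (a) and (b).
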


The motivation for defining the inner product \eqref{sa 1.3} is explicitly stated in \cite{JAT1999-Alfaro-Martinez et al.- asymptotic for balanced extremal for SOP}. As the Sobolev orthogonality is defined by combining the usual $L^2$-inner product with a derivative term, for polynomials of increasing degree, these two contributions have different natural scales, since differentiation multiplies the leading coefficient of a degree $n$ polynomial by $n$. Thus, in asymptotic problems, it is natural to consider a degree-dependent coefficient in the derivative part of the Sobolev inner product. In \cite{JAT1999-Alfaro-Martinez et al.- asymptotic for balanced extremal for SOP, TAMS2009- Alfaro et al- SOP with balance asymptotics, JAT2011-MartinezSimon-L2DerivativeOPUC}, the inner product defined by \eqref{sa 1.3}, is referred to as a balanced Sobolev inner product.
	Theorem \ref{JAT: Properties of s=2 with same measure} motivates us to the following questions.
\begin{itemize}
\item To study the asymptotic behaviour of the sequence of orthogonal polynomials associated with the inner product \eqref{sa 1.3}  by considering two distinct measures $\mu_1$ and $\mu_2$ under the same conditions given in Theorem \ref{JAT: Properties of s=2 with same measure}.
\item To analyze the asymptotic behaviour by increasing the decay in the Sobolev part and consider two distinct measures satisfying the same conditions as in Theorem \ref{JAT: Properties of s=2 with same measure}.
\end{itemize}
	To address these questions, we consider the sequence of Sobolev inner products
	\begin{align}\label{sa 1.4}
		\langle f,g\rangle _{\frac{\lambda}{n^s}}:= \int_{\mathbb{T}} f(z)\overline{g(z)} d\mu_1(z)+ \dfrac{\lambda}{n^s} \int_{\mathbb{T}} f^{\prime}(z)\overline{g^{\prime}(z)} d\mu_2(z),
	\end{align}
	where $\lambda >0$ and $s \geq 2$ be fixed. We impose the following conditions on the measures $\mu_1$ and $\mu_2$ in \eqref{sa 1.4}:
\begin{itemize}
		\item $\mu_1$, $\mu_2$ are absolutely continuous with respect to the normalized lebesgue measure. 
		$
		$
		\item $\int_{|z|=1} \mu^{\prime}_i(z) dm=1$, where $\mu_i^{\prime}(z)= \dfrac{d\mu_i}{dm} >0$ for $i=1,2$.
		\item  $\mu_1, \mu_2$ belongs to the Szeg\"o class.
		\item  The Radon-Nikodym derivative of the measures $\mu_1$ and $ \mu_2$ satisfies
\begin{align}\label{boundedness of the Radon-Nikodym derivatives}
			0<c_j \leq \mu_{j}^{\prime}(e^{\iota\theta})\leq C_j < \infty, \quad \text{for} \quad j=1,2,
\end{align}
		where $c_j$ and $C_j$, $j=1,2$ are appropriately chosen constants, which means
\begin{align*}
			0 <c \leq \dfrac{\mu^{\prime}_1(e^{\iota \theta})}{\mu^{\prime}_2(e^{\iota \theta})}\leq C < \infty.
\end{align*}
\end{itemize}
The above inequality implies that the  $L^2 $- norms of the measures $\mu_1$ and $\mu_2$, namely  $\|.\|_{L^2(d\mu_1)}$ and $\|.\|_{L^2(d\mu_2)}$ are equivalent, and both the measures $\mu_1$, $\mu_2$ have normal $L^2$-derivative behavior (see Theorem $5.1$ in \cite{JAT2011-MartinezSimon-L2DerivativeOPUC}).
Here, the corresponding monic orthogonal polynomial $Q_n(z)$ of degree $n$ can be characterized as the unique monic polynomial of degree $n$ that minimizes the norm induced by the inner product \eqref{sa 1.4} among all monic polynomials of degree $n$.
	Equivalently,
\begin{align*}
		\|Q_n(z)\|_{\frac{\lambda}{n^s}}= \min_{P\in \mathbb{P}_n} \| P\|^2_{\frac{\lambda}{n^s}}, \qquad \mathbb{P}_n:= \left\{ P : P(z)= z^n+ \ldots \right\}.
\end{align*}
	
Having established the necessary preliminaries, we organize the remainder of the paper as follows. In Section $2$, we prove a Szeg\"{o}-type strong asymptotic formula for the case $s>2$, and also study the corresponding results related to the strong asymptotics.  Section 3 is devoted to the special case $s=2$. In Section $4$, we consider an illustrative example, namely the Bernstein–Christoffel Sobolev orthogonal polynomials and derive a connection formula, and show how the strong asymptotic formula can be applied to determine the asymptotic distribution of their zeros in both cases $s>2$ and $s=2$. We exhibit a method to find the zeros of these Sobolev orthogonal polynomials as the eigenvalues of a explicitly constructed single line modification of the $n \times n$  Hessenberg matrix associated with the orthogonal polynomial sequence $\left\{\Phi_{n}(z; d\mu_1)\right\}$. Along with that we also study the theoretical and numerical analysis of the locations of their zeros for both $s>2$ and $s=2$ cases. We also provide support to the asymptotic results established in sections $2$ and $3$ for this specific orthogonal polynomial through numerical values.
	
\section{ Strong Asymptotics  of the Sobolev Orthogonal Polynomials for  \texorpdfstring{$s>2$}{s>2} case.}
One of the main objectives of this section is to establish the strong asymptotics for the sequence of Sobolev orthogonal polynomials $\left\{\Psi_n(z)\right\}_{n\geq 0}$ associated with the Sobolev inner product in \eqref{sa 1.4}, where $s>2$ is fixed and $\lambda>0$. To this end, we begin with the following results:
\begin{lemma}\label{Norm convergence of Poly difference of Phi and Psi, s>2}
		Let $\{ \Phi_n(z;d\mu_1)\}$ be the sequence of monic orthogonal polynomials with respect to the measure $\mu_1$. Then the following hold:
		\begin{align*}
			\| \Psi_n(z)-\Phi_n(z;d\mu_1)\|_{\frac{\lambda}{n^s}} \to 0 \quad \text{as} \quad n \to \infty  .
		\end{align*}
\end{lemma}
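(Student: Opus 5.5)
We use the extremal property of $\Psi_n$ together with orthogonality. Write $\langle\cdot,\cdot\rangle_n$ for the inner product \eqref{sa 1.4}, and write $\Phi_n=\Phi_n(z;d\mu_1)$.

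\textbf{Step 1 (Pythagoras).} Since $\Psi_n-\Phi_n$ has degree at most $n-1$ and $\Psi_n$ is $\langle\cdot,\cdot\rangle_n$-orthogonal to all polynomials of degree $\le n-1$,
\begin{align*}
\|\Phi_n\|^2_{\frac{\lambda}{n^s}}=\|\Psi_n\|^2_{\frac{\lambda}{n^s}}+\|\Psi_n-\Phi_n\|^2_{\frac{\lambda}{n^s}}.
\end{align*}
Hence
\begin{align*}
\|\Psi_n-\Phi_n\|^2_{\frac{\lambda}{n^s}}=\|\Phi_n\|^2_{\frac{\lambda}{n^s}}-\|\Psi_n\|^2_{\frac{\lambda}{n^s}}.
\end{align*}

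\textbf{Step 2 (lower bound for $\|\Psi_n\|$).} Dropping the derivative term gives
\begin{align*}
\|\Psi_n\|^2_{\frac{\lambda}{n^s}}\ge\|\Psi_n\|^2_{L^2(\mu_1)}\ge\|\Phi_n\|^2_{L^2(\mu_1)},
\end{align*}
because $\Phi_n$ minimizes the $L^2(\mu_1)$ norm among monic polynomials of degree $n$. Combining this with Step 1,
\begin{align*}
\|\Psi_n-\Phi_n\|^2_{\frac{\lambda}{n^s}}\le \frac{\lambda}{n^s}\int_{\mathbb{T}}|\Phi_n'(z)|^2\,d\mu_2(z).
\end{align*}

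\textbf{Step 3 (bounding the derivative term).} By the bounds \eqref{boundedness of the Radon-Nikodym derivatives}, $d\mu_2\le (C_2/c_1)\,d\mu_1$, so
\begin{align*}
\int|\Phi_n'|^2\,d\mu_2\le \frac{C_2}{c_1}\int|\Phi_n'|^2\,d\mu_1=\frac{C_2}{c_1}\,\kappa_n(d\mu_1)^{-2}\int|\phi_n'|^2\,d\mu_1 .
\end{align*}
Since $\mu_1\in\mathcal{S}$, it has normal $L^2$-derivative behaviour, so $\int|\phi_n'|^2\,d\mu_1=n^2(1+o(1))$. Moreover $\kappa_n(d\mu_1)^{-2}\le 1$, since $\kappa_n$ increases and $\kappa_0=1$ by the normalization of $\mu_1$.

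If one prefers to avoid the normal-derivative result, a cruder bound also works. With $\mu_1'\le C_1$ we have $\|\Phi_n\|_\infty\le$ const$\cdot\sqrt n$ on $\mathbb{T}$ (by Cauchy--Schwarz in the orthonormal basis together with boundedness of $\mu_1'$), and Bernstein's inequality then gives $\|\Phi_n'\|_\infty\le$ const$\cdot n^{3/2}$. This yields only $O(n^{3-s})$, which does not suffice for $s\le 3$; so the $L^2$ Bernstein inequality is the better route. Indeed, applying the $L^2(dm)$ Bernstein inequality $\|P'\|_{L^2(dm)}\le n\|P\|_{L^2(dm)}$, valid for polynomials of degree $n$, together with the equivalence of the $L^2(\mu_j)$ and $L^2(dm)$ norms, gives $\int|\Phi_n'|^2\,d\mu_2\le (C_2/c_1)\,n^2\|\Phi_n\|^2_{L^2(\mu_1)}\le (C_2/c_1)\,n^2$. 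This is elementary and avoids the cited theorem altogether.

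\textbf{Step 4 (conclusion).} From Steps 2 and 3,
\begin{align*}
\|\Psi_n-\Phi_n\|^2_{\frac{\lambda}{n^s}}\le \frac{\lambda C_2}{c_1}\,n^{2-s}\to0 \qquad (s>2).
\end{align*}
The only delicate point is Step 3, namely controlling $\|\Phi_n'\|$ at order exactly $n$; the $L^2$ Bernstein inequality combined with the norm equivalence handles it cleanly. At $s=2$ the argument gives only boundedness, consistent with the separate treatment of that case in Section 3.
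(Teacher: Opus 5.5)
Your proof is correct, and it takes a different route from the paper's.

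\textbf{The paper's route.} It writes $\Psi_n=\Phi_n(z;d\mu_1)-Q_{n-1}$ and uses only the $L^2(\mu_1)$-orthogonality of $\Phi_n$ to $Q_{n-1}$ together with the minimality of $\Psi_n$. This gives $\|Q_{n-1}\|^2_{\frac{\lambda}{n^s}}\le \frac{2\lambda}{n^s}\mathrm{Re}\int \Phi_n'\overline{Q_{n-1}'}\,d\mu_2$. It then controls this cross term with three ingredients:
\begin{itemize}
\item Cauchy--Schwarz;
\item the bound $\|\Phi_n'\|_{L^2(\mu_2)}=O(n)$, taken from normal derivative behaviour;
\item Bernstein's inequality applied to the unknown polynomial $Q_{n-1}$.
\end{itemize}

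\textbf{Your route.} You use the Sobolev orthogonality of $\Psi_n$ (Pythagoras) together with the $L^2(\mu_1)$-extremality of $\Phi_n$. This removes the cross term and the derivative of $\Psi_n$ altogether, so you only need a bound on $\|\Phi_n'\|_{L^2(\mu_2)}$. Your coefficient-wise Bernstein bound plus $\|\Phi_n\|_{L^2(\mu_1)}\le 1$ supplies it without the normal-derivative theorem. The same observation would let the paper drop that citation as well. Your remark $\kappa_n^{-2}\le 1$ also tidies the paper's claim $\|\Phi_n'\|_{L^2(\mu_1)}=n(1+o(1))$, which is really only $O(n)$.

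\textbf{What each buys.} The paper's route gives a sharper rate. Because Bernstein is applied to $Q_{n-1}$, the inequality becomes linear in $\|Q_{n-1}\|_{\frac{\lambda}{n^s}}$ and yields $\|\Psi_n-\Phi_n\|_{\frac{\lambda}{n^s}}=O(n^{2-s})$. Your bound gives $\|\Psi_n-\Phi_n\|^2_{\frac{\lambda}{n^s}}=O(n^{2-s})$, so only $O(n^{1-s/2})$ for the norm. The loss comes from dropping $\frac{\lambda}{n^s}\|\Psi_n'\|^2_{L^2(\mu_2)}$, which is itself of order $n^{2-s}$ and largely cancels $\frac{\lambda}{n^s}\|\Phi_n'\|^2_{L^2(\mu_2)}$. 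The lemma only asks for convergence to $0$ when $s>2$, so both arguments suffice, and yours is shorter.
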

\begin{proof}
Write
\begin{align}\label{sa lemma 1 1}
			\Psi_n(z)=\Phi_{n}(z;d\mu_1)-Q_{n-1}(z),
\end{align}
		where $Q_{n-1}(z)$ is a polynomial of degree $n-1$. Now
\newline
$
\displaystyle
\langle \Psi_n(z), \Psi_n(z)\rangle _{d\mu_1}
$
\begin{align*}
&=\langle \Phi_{n}(z;d\mu_1)-Q_{n-1}(z),\Phi_{n}(z;d\mu_1)-Q_{n-1}(z)\rangle_{d\mu_1}\\
&=\|\Phi_n(z;d\mu_1)\|^2_{L^2(d\mu_1)}+\|Q_{n-1}(z)\|^2_{L^2(d\mu_1)}-2 \text{Re}\langle \Phi_n(z;d\mu_1),Q_{n-1}(z)\rangle_{L^2(d\mu_1)}
\end{align*}
		and $\Phi_n(z;d\mu_1) \perp Q_{n-1}(z)$ in $L^2(d\mu_1)$ implies
\begin{equation*}
			\| \Psi_n(z)\|^2 _{L^2(d\mu_1)}= \|\Phi_n(z;d\mu_1)\|^2_{L^2(d\mu_1)}+\|Q_{n-1}(z)\|^2_{L^2(d\mu_1)}.
\end{equation*}
Since
\begin{align*}
\dfrac{\lambda}{n^s}\int_{|z|=1} | \Psi_n^{\prime}(z)|^2 d\mu_{2}(z)
&
=\dfrac{\lambda}{n^s}\int_{|z|=1} |\Phi_{n}^{\prime}(z,d\mu_1)|^2d\mu_{2}+\dfrac{\lambda}{n^s}\int_{|z|=1} |Q^{\prime}_{n-1}(z)|^2 d\mu_{2}
\\
&\qquad
-\dfrac{2\lambda}{n^s}\text{Re} \left(\int_{|z|=1} \Phi_n^{\prime}(z;d\mu_1)\overline{Q^{\prime}_{n-1}(z)} d\mu_2\right),
\end{align*}
we can write
\newline
$
\displaystyle
\|\Psi_n(z)\|^2_{\frac{\lambda}{n^s}}
$
\begin{align}\label{sa lemma1 2}
			= \|\Phi_n(z;d\mu_1)\|^2_{\frac{\lambda}{n^s}}+\|Q_{n-1}(z)\|^2_{\frac{\lambda}{n^s}}-\dfrac{2\lambda}{n^s}\text{Re}\left(\int_{|z|=1}\Phi_n^{\prime}(z,d\mu_1)\overline{Q_{n-1}^{\prime}(z)}d\mu_{2}\right)	\end{align}
		Now by using  the minimal norm property and \eqref{sa lemma1 2}, we have
\begin{align*}
\|\Phi_n(z;d\mu_1)\|^2_{\frac{\lambda}{n^s}}+\|Q_{n-1}(z)\|^2_{\frac{\lambda}{n^s}}-\dfrac{2\lambda}{n^s}\text{Re}\left(\int_{|z|=1}\Phi_n^{\prime}(z,d\mu_1)\overline{Q_{n-1}^{\prime}(z)}d\mu_{2}\right) \leq \|\Phi_n(z;d\mu_1)\|^2_{\frac{\lambda}{n^s}},
	\end{align*}
	which implies
\begin{equation}\label{sa lemma1 3}
		\|Q_{n-1}(z)\|^2_{\frac{\lambda}{n^s}} -\dfrac{2\lambda}{n^s}\text{Re}\left(\int_{|z|=1}\Phi_n^{\prime}(z,d\mu_1)\overline{Q_{n-1}^{\prime}(z)}d\mu_{2}\right)	\leq 0.
\end{equation}
By Cauchy-Schwarz inequality, we have
\begin{align*}
		\text{Re}\left(\int_{|z|=1}\Phi_n^{\prime}(z,d\mu_1)\overline{Q_{n-1}^{\prime}(z)}d\mu_{2}\right) &\leq \|\Phi_n^{\prime}(z,d\mu_1)\|_{L^2(d\mu_2)} \| Q^{\prime}_{n-1}(z)\|_{L^2(d\mu_{2})}.
\end{align*}
	Normal derivative behavior of $\mu_1$ implies
\begin{align}\label{sa  lemma1 4}
		\| \phi^{\prime}_n(z;d\mu_1)\|_{L^2(d\mu_1)}=n(1+o(1)).
\end{align}
Since $ \Phi_n(z;d\mu_1)=\kappa_n(d\mu)^{-1} \phi_n(z;d\mu)$,  using \eqref{sa 1.1}, \eqref{sa  lemma1 4} and
\begin{align*}
		\|\Phi^{\prime}_n(z;d\mu_1)\|_{L^2(d\mu_1)}= \newline \kappa_n(d\mu)^{-1} \|\phi^{\prime}_n(z;d\mu)\|_{L^2(d\mu_1)}
\end{align*}	
we conclude that
\begin{align*}
		\|\Phi^{\prime}_n(z;d\mu_1)\|_{L^2(d\mu_1)}=n(1+o(1)).
\end{align*}
Using the above argument along with the norm equivalance of the measures $\|.\|_{L^2(d\mu_1)}$ and $\|.\|_{L^2(d\mu_2)}$, we get
\begin{equation*}
		\|\Phi^{\prime}_n(z;d\mu_1)\|_{L^2(d\mu_2)}=\mathcal{O}(n).
\end{equation*}
	This means  there exist a constant $M^{\prime}$ such that $\|\Phi^{\prime}_n(z;d\mu_1)\|_{L^2(d\mu_2)} \leq M^{\prime}n$ for all $n \ \geq N_0$. Now by Bernstein  inequality we  have
\begin{align}\label{sa lemma1 5}
		\|Q_{n-1}^{\prime}(z)\|_{L^2(d\mu_2)}\leq K_2 n \|Q_{n-1}(z)\|_{L^2(d\mu_2)},
\end{align}
where $K_2 >0$.
Using the norm equivalence between   $\| .\|_{L^2(d\mu_1)}$ and $\|. \|_{L^2(d\mu_2)}$ we get
\begin{align} \label{sa lemma1 6}
		\|Q_{n-1}(z)\|_{L^{2}(d\mu_2)} \leq C \|Q_{n-1}(z)\|_{L^2(d\mu_1)}\leq C \|Q_{n-1}(z)\|_{\frac{\lambda}{n^s}},
\end{align}
	for some $C>0$.
	Using \eqref{sa lemma1 6}, \eqref{sa lemma1 5} becomes
\begin{align*}
		\|Q_{n-1}^{\prime}(z)\|_{L^2(d\mu_2)}\leq K_3 n \|Q_{n-1}(z)\|_{\frac{\lambda}{n^s}}, \quad \text{where} \quad K_3= CK_2.
\end{align*}
	Hence, for $n \geq N_0$, we have
\begin{align}\label{sa lemma1 7}
		\dfrac{2\lambda}{n^s}\text{Re}\left(\int_{|z|=1}\Phi_n^{\prime}(z,d\mu_1)\overline{Q_{n-1}^{\prime}(z)}d\mu_{2}\right)\leq \dfrac{2\lambda}{n^s}K_4 n^2 \|Q_{n-1}(z)\|_{\frac{\lambda}{n^s}},
\end{align}
	where $K_4 =  M^{\prime} K_3$.
	From \eqref{sa lemma1 3} and \eqref{sa lemma1 7} we have
\begin{align*}
		\|Q_{n-1}(z)\|^2_{\frac{\lambda}{n^s}}\leq
		\dfrac{2\lambda}{n^s}K_4 n^2 \|Q_{n-1}(z)\|_{\frac{\lambda}{n^s}}.
\end{align*}
	$ \|Q_{n-1}(z)\|_{\frac{\lambda}{n^s}}\neq 0$ implies
\begin{align}\label{sa lemma1 8}
		0<\|Q_{n-1}(z)\|_{\frac{\lambda}{n^s}}\leq \dfrac{2\lambda}{n^s}K_4 n^2.
\end{align}
Since $s>2$, we can write $s=2+\epsilon$ for any $\epsilon>0.$  Using this in \eqref{sa lemma1 8}, and letting $n \to \infty $, gives the  final result.
\end{proof}
	
\begin{lemma}\label{Bound for classical Phi w.r.t first measure}
			On the compact subset $K$ of $|z|>1$, we have
\begin{enumerate}
				\item [\rm (1)]$\exists$ a constant $C_k>0$	 such that $	| \Phi_n(z,d\mu_1)| \geq C_k |z|^n $.
				\item [\rm (2)] $\exists$ a constant  $L_k>0$ such that 	$|\Phi_{n}(z;d\mu_1)| \leq L_kR^n $  where $ R:= \max_{z\in K} |z|,$
\end{enumerate}
			where $\left\{ \Phi_n(z; d\mu_1)\right\}$ is the sequence of monic orthogonal polynomials with respect to the measure $\mu_1$.
\end{lemma}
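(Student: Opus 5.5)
The approach is to use the Szeg\H{o} asymptotics for $\mu_1\in\mathcal{S}$ recalled in the introduction. These give
\begin{align*}
\lim_{n\to\infty}\frac{\phi_n(z;d\mu_1)}{z^n}=\overline{D\left(\mu_1',\bar z^{-1};d\mu_1\right)^{-1}}
\end{align*}
uniformly on $|z|\ge r>1$. Since $\Phi_n=\kappa_n(d\mu_1)^{-1}\phi_n$ and, by \eqref{sa 1.1}, $\kappa_n(d\mu_1)\to D(\mu_1',0;d\mu_1)^{-1}$, we get
\begin{align*}
\frac{\Phi_n(z;d\mu_1)}{z^n}\to G(z):=D(\mu_1',0;d\mu_1)\,\overline{D\left(\mu_1',\bar z^{-1};d\mu_1\right)^{-1}}
\end{align*}
uniformly on the compact set $K\subset\{|z|>1\}$. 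Here $K$ lies in some annulus $r\le|z|\le R$ with $r>1$ by compactness.

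The key point is that $G$ is continuous and zero-free on $K$. The Szeg\H{o} function is an exponential of an analytic function in $\mathbb{D}$, so it never vanishes. Moreover, $\bar z^{-1}$ ranges over the compact set $\{\bar z^{-1}:z\in K\}\subset\mathbb{D}$. Hence $|G|$ attains a positive minimum $m_K$ and a finite maximum $M_K$ on $K$.

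From the uniform convergence, pick $N$ such that for $n\ge N$ and $z\in K$,
\begin{align*}
\frac{m_K}{2}\le\left|\frac{\Phi_n(z;d\mu_1)}{z^n}\right|\le 2M_K .
\end{align*}
The lower inequality gives (1) for $n\ge N$. The upper one gives $|\Phi_n(z;d\mu_1)|\le 2M_K|z|^n\le 2M_K R^n$, which is (2).

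It remains to handle the finitely many indices $n<N$. For (2), each $\Phi_n$ is continuous on $K$, so we enlarge $L_k$ to $\max_{n<N}\max_K|\Phi_n|/R^n$. For (1), we use that all zeros of $\Phi_n(\cdot;d\mu_1)$ lie in $\mathbb{D}$. Thus $|\Phi_n(z)/z^n|$ is continuous and positive on $K$ and has a positive minimum there, and $C_k$ is the minimum of these finitely many positive numbers together with $m_K/2$.

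The main obstacle is the careful justification that the Szeg\H{o} asymptotics apply uniformly on $K$. This requires $K$ to be bounded away from $\mathbb{T}$, which holds for compact subsets of the open set $|z|>1$. It also requires the nonvanishing of the limit, which follows from the exponential form of $D$.
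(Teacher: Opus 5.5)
Your proposal is correct and follows the paper's route: the Szeg\H{o} asymptotics give $\Phi_n(z;d\mu_1)/z^n\to G(z)$ uniformly on $K$, $G$ is continuous and zero-free there, and taking $\epsilon=m_K/2$ yields both bounds for $n\ge N$. Your extra treatment of the finitely many indices $n<N$, using that the zeros of $\Phi_n(\cdot;d\mu_1)$ lie in $\mathbb{D}$, makes the constants valid for all $n$; the paper's proof leaves this step implicit and establishes the bounds only for $n\ge N$.
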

\begin{proof}
			Since $\mu_1$ is in the Szeg\"o class, we have
\begin{align}\label{sa lemma2 1}
				\lim_{n \to \infty} \dfrac{\Phi_n(z,d\mu_1)}{z^n} = \dfrac{\overline{D(	\mu^{\prime}_1,\bar{z}^{-1};d\mu_1)^{-1}}}{\overline{D(\mu^{\prime}_1,0;d\mu_1)^{-1}}}:=G(z)
\end{align}
uniformly on compact subsets of $|z|>1$. Now let $K\subset \{z : |z|>1\}$ be compact. Then $G(z)$ is continuous and non-zero  on $K$. This implies  $\exists$ $m_k >0$ such that $ |G(z)|\geq m_k$ for all $ z \in K$. Now if we choose   $\epsilon = \dfrac{m_k}{2}$, then using \eqref{sa lemma2 1}, we get
\begin{align}\label{sa lemma2 2}
				\left| \dfrac{\Phi_n(z,d\mu_1)}{z^n} -G(z)\right| \leq \dfrac{m_k}{2}, \quad \forall z \in K, \quad \forall n \geq N.
\end{align}
From \eqref{sa lemma2 2}, we get
\begin{align*}
				\left| \dfrac{\Phi_n(z,d\mu_1)}{z^n}\right| \geq \dfrac{m_k}{2}
\end{align*}
which implies
\begin{align} \label{sa lemma2 3}
				| \Phi_n(z,d\mu_1)| \geq \dfrac{m_k}{2}|z|^n.
\end{align}
Since $K $ is a compact subset contained in $|z|>1$, there exists $r>1$, such that $|z| \geq r$ for all $z \in K$. From \eqref{sa lemma2 3}, we get
\begin{align*}
				|\Phi_n(z;d\mu_1)| \geq C_k r^n  \quad \forall z \in K , \quad \text{where} \quad C_k:= \dfrac{m_k}{2}.
\end{align*}
Similarly, if we define $M_k:= \sup_{z\in K} \left|G(z)\right|$ and $\epsilon =\dfrac{m_k}{2}$, using \eqref{sa lemma2 2}, we get
\begin{align}\label{sa lemma2 4}
				\left| \dfrac{\Phi_n(z,d\mu_1)}{z^n}\right| \leq \left(\dfrac{m_k}{2}+M_k\right).
\end{align}
Further using \eqref{sa lemma2 4}, we have
\begin{align*}
|\Phi_{n}(z;d\mu_1)| \leq L_k R^n,
\end{align*}
where $R:= \max_{z\in K} |z| $, and $L_k= \dfrac{m_k}{2}+M_k$.
\end{proof}
\begin{lemma}\label{Uniform convergence of ratio of Psi and Phi, s>2}
			Let $\left\{\Psi_n(z)\right\}$ be the sequence of Sobolev orthogonal polynomial with respect to the inner product \eqref{sa 1.4} with $s$- fixed, $s>2$ and $\left\{\Phi_n(z;d\mu_1)\right\}$ be the sequence of monic orthogonal polynomials with respect to the measure $\mu_1$. Then
			\begin{align*}
				\dfrac{\Psi_n(z)}{\Phi_n(z, d\mu_1)} \to 1
\end{align*}
uniformly on the compact subsets of $\mathbb{C}\setminus \overline{\mathbb{D}}$.
\end{lemma}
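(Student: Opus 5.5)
The plan is to write $\Psi_n=\Phi_n(\cdot;d\mu_1)-Q_{n-1}$, as in the proof of Lemma \ref{Norm convergence of Poly difference of Phi and Psi, s>2}, so that
\begin{align*}
\dfrac{\Psi_n(z)}{\Phi_n(z;d\mu_1)}-1=-\dfrac{Q_{n-1}(z)}{\Phi_n(z;d\mu_1)}.
\end{align*}
It then suffices to show that $Q_{n-1}(z)/\Phi_n(z;d\mu_1)\to 0$ uniformly on a compact set $K\subset\{|z|>1\}$. Lemma \ref{Norm convergence of Poly difference of Phi and Psi, s>2} gives $\|Q_{n-1}\|_{\frac{\lambda}{n^s}}\to0$; its proof in fact yields the quantitative bound $\|Q_{n-1}\|_{\frac{\lambda}{n^s}}=\mathcal{O}(n^{2-s})$. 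In particular $\|Q_{n-1}\|_{L^2(d\mu_1)}\to 0$, since the Sobolev norm dominates the $L^2(d\mu_1)$ norm.

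The next step is to convert this $L^2$ smallness on $\mathbb{T}$ into a pointwise bound outside the disc. Expand $Q_{n-1}=\sum_{k=0}^{n-1}c_k\phi_k(\cdot;d\mu_1)$, so that $\sum|c_k|^2=\|Q_{n-1}\|^2_{L^2(d\mu_1)}$. By Cauchy--Schwarz,
\begin{align*}
|Q_{n-1}(z)|\le \|Q_{n-1}\|_{L^2(d\mu_1)}\Big(\sum_{k=0}^{n-1}|\phi_k(z;d\mu_1)|^2\Big)^{1/2}.
\end{align*}
Alternatively, one can avoid orthonormal expansions and use $0<c_1\le\mu_1'$ to bound $\|Q_{n-1}\|_{L^2(dm)}$. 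The Taylor coefficients $a_k$ of $Q_{n-1}$ are then square summable with the same bound, and
\begin{align*}
|Q_{n-1}(z)|\le \|Q_{n-1}\|_{L^2(dm)}\Big(\sum_{k=0}^{n-1}|z|^{2k}\Big)^{1/2}\le C\,\|Q_{n-1}\|_{L^2(d\mu_1)}\dfrac{|z|^{n}}{\sqrt{|z|^2-1}}.
\end{align*}
I would use this second route, since it involves only the lower bound in \eqref{boundedness of the Radon-Nikodym derivatives}.

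To finish, combine this with Lemma \ref{Bound for classical Phi w.r.t first measure}(1), which gives $|\Phi_n(z;d\mu_1)|\ge C_k|z|^n$ on $K$ for $n\ge N$. The factor $|z|^n$ cancels and we get
\begin{align*}
\Big|\dfrac{Q_{n-1}(z)}{\Phi_n(z;d\mu_1)}\Big|\le \dfrac{C}{C_k\sqrt{r^2-1}}\,\|Q_{n-1}\|_{L^2(d\mu_1)}\longrightarrow0,
\end{align*}
where $r=\min_K|z|>1$. This convergence is uniform on $K$, which proves the lemma.

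The main point to check is that the pointwise growth of $Q_{n-1}$ matches the $|z|^n$ lower bound of $\Phi_n$ exactly, with no extra polynomial factor in $n$. The geometric-sum estimate gives this because $|z|\ge r>1$. One should also use the precise form of Lemma \ref{Bound for classical Phi w.r.t first measure}(1), namely $|\Phi_n(z;d\mu_1)|\ge (m_k/2)|z|^n$ from \eqref{sa lemma2 3}, rather than the weaker version $C_kr^n$: bounding $|z|^n$ below by $r^n$ would leave a non-cancelling ratio $(|z|/r)^n$.
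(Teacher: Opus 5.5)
Your proposal is correct and follows essentially the same route as the paper. Like the paper, you write $\Psi_n=\Phi_n(\cdot;d\mu_1)-Q_{n-1}$, pass from $\|Q_{n-1}\|_{L^2(d\mu_1)}\to0$ (Lemma \ref{Norm convergence of Poly difference of Phi and Psi, s>2}) to $\ell^2$-smallness of the Taylor coefficients via the lower bound on $\mu_1'$ and Parseval, bound $Q_{n-1}$ on $|z|\ge r>1$ by Cauchy--Schwarz against a geometric sum, and divide by the lower bound $|\Phi_n(z;d\mu_1)/z^n|\ge m_k/2$ from \eqref{sa lemma2 3}. The differences are cosmetic: you normalize by $|z|^n$ using the finite geometric sum, whereas the paper normalizes by $z^{n-1}$ and uses $\sum_j R^{-2j}$. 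Your insistence on the $|z|^n$ form of Lemma \ref{Bound for classical Phi w.r.t first measure}(1), rather than the weakened $C_k r^n$, matches how the paper's proof actually uses that lemma.
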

\begin{proof}
			From \eqref{sa lemma 1 1}, we can write
\begin{align*}
				\dfrac{\Psi_n(z)}{\Phi_n(z;d\mu_1)}=1-\dfrac{Q_{n-1}(z)}{\Phi_n(z;d\mu_1)}.
\end{align*}		
			From Lemma \ref{Norm convergence of Poly difference of Phi and Psi, s>2}, we have $\|Q_{n-1}(z)\|_{L^2(d\mu_1)} \to 0$. So, we have,  $\|Q_{n-1}(z)\|_{L^2(\frac{dz}{2\pi\iota z})} \to 0$, by \eqref{boundedness of the Radon-Nikodym derivatives} . Now define $Q_{n-1}(z) := \sum_{k=0}^{n-1} a_{n,k} z^k$. By using Parseval's identity, we get
\begin{align*}
				\|Q_{n-1}(z)\|^2_{L^2(\frac{dz}{2\pi\iota z})}=\sum_{k=0}^{n-1}|a_{k,n}|^2 \to 0 \quad \text{as} \quad n \to \infty .
\end{align*}
			Now if $K \subset \{z : |z|>1\}$ is compact, we define  $R:= \min_{z\in K} |z|$, such that  $R>1$, and for every $ z \in K$, $|z| \geq R$. $ \dfrac{Q_{n-1}(z)}{z^{n-1}}= \sum_{k=0}^{n-1} a_{k,n} z^{k-(n-1)}$. Clearly, $|z|^{-(n-1-k)}\leq R^{-(n-1-k)}$. Therefore,
\begin{align*}
				\left| \dfrac{Q_{n-1}(z)}{z^{n-1}}\right| \leq \sum_{k=0}^{n-1} |a_{k,n}| R^{-(n-1-k)}.
\end{align*}
Applying Cauchy-Schwarz inequality, we get
\begin{align*}
				\sup_{z\in K} \left| \dfrac{Q_{n-1}(z)}{z^{n-1}}\right| \leq \left( \sum_{k=0}^{n-1}|a_{k,n}|^2\right)^{1/2} \left(\sum_{j=0}^{\infty} R^{-2j}\right)^{1/2}.
\end{align*}
Since $ \sum_{j=0}^{\infty} R^{-2j}< \infty$, we get
\begin{align}\label{sa lemma3 1}
				\sup_{z\in K}  	\left| \dfrac{Q_{n-1}(z)}{z^{n-1}}\right| \to 0 \quad \text{as} \quad n \to \infty.
\end{align}
Therefore, if we write
\begin{align*}
\dfrac{Q_{n-1}(z)}{\Phi_n(z;d\mu_1)}=  \dfrac{1}{z}\dfrac{Q_{n-1}(z)}{z^{n-1}} \dfrac{1}{\frac{\Phi_n(z,d\mu_1)}{z^n} },
\end{align*}
then  from  \eqref{sa lemma3 1} and Lemma  \ref{Bound for classical Phi w.r.t first measure}, we get
\begin{align}\label{sa lemma3 2}
				\dfrac{Q_{n-1}(z)}{\Phi_n(z;d\mu_1)} \to 0 \quad \text{uniformly on the compact subsets of } \mathbb{C}\setminus \overline{ \mathbb{D}}.
\end{align}
From \eqref{sa lemma3 2}, we get the result and the proof is complete.
\end{proof}
Now we are in a position to provide one of the main results of this section, which is the strong asymptotic formula for $\Psi_n(z)$.
		\begin{theorem}\label{Strong Asymptote for Psi, s>2 case}
			Let $\left\{\Psi_n(z)\right\}$ be the sequence of Sobolev orthogonal polynomial with respect to the inner product \eqref{sa 1.4} with $s$- fixed, $s>2$ and $\lambda >0$. Then
\begin{align*}
				\lim_{n \rightarrow \infty} \dfrac{\Psi_n(z)}{z^n}= \dfrac{\overline{D(	\mu^{\prime}_1,\bar{z}^{-1};d\mu_1)^{-1}}}{\overline{D(\mu^{\prime}_1,0;d\mu_1)^{-1}}}\quad \text{uniformly for compact subsets } K \quad \text{of} \quad |z|>1,
\end{align*}
			where $D(\mu^{'}_1,z;d\mu_1)$ is the Szeg\"o function of the measure $d\mu_1$, defined by
\begin{align*}
				D(\mu^{\prime}_1,z;d\mu_1)= \exp\left( \dfrac{1}{4\pi}\int_{0}^{2\pi} \dfrac{e^{\iota \theta}+z}{e^{\iota \theta}-z}\log(\mu_1^{\prime}(e^{\iota \theta}))d\theta\right).
\end{align*}
\end{theorem}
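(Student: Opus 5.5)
The plan is to factor the quotient through the ordinary monic orthogonal polynomials of $\mu_1$ and use the two limits already available:
\begin{align*}
\frac{\Psi_n(z)}{z^n}=\frac{\Psi_n(z)}{\Phi_n(z;d\mu_1)}\cdot\frac{\Phi_n(z;d\mu_1)}{z^n}.
\end{align*}
This is legitimate on a compact set $K\subset\{|z|>1\}$ for all large $n$. Indeed, Lemma \ref{Bound for classical Phi w.r.t first measure}(1) gives $|\Phi_n(z;d\mu_1)|\ge C_k|z|^n>0$ on $K$ for $n\ge N$, so $\Phi_n(z;d\mu_1)$ has no zeros on $K$.

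For the first factor, Lemma \ref{Uniform convergence of ratio of Psi and Phi, s>2} gives $\Psi_n/\Phi_n(\cdot;d\mu_1)\to1$ uniformly on $K$. For the second factor, the classical Szeg\H{o} asymptotics for $\mu_1\in\mathcal{S}$, recorded as \eqref{sa lemma2 1}, give $\Phi_n(z;d\mu_1)/z^n\to G(z)$ uniformly on $K$. Here
\begin{align*}
G(z)=\overline{D(\mu_1',\bar z^{-1};d\mu_1)^{-1}}\,\big/\,\overline{D(\mu_1',0;d\mu_1)^{-1}}.
\end{align*}
This expression comes from the orthonormal version $\phi_n/z^n\to\overline{D(\mu_1',\bar z^{-1})^{-1}}$ after dividing by $\kappa_n(d\mu_1)\to D(\mu_1',0;d\mu_1)^{-1}$, as in \eqref{sa 1.1}. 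Since $D(\mu_1',0)$ is a positive real number, the conjugation in the denominator is harmless.

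To combine the two factors, note that a product of two uniformly convergent sequences converges uniformly to the product of the limits, provided the factors are uniformly bounded. Write
\begin{align*}
\frac{\Psi_n}{z^n}-G=\Big(\frac{\Psi_n}{\Phi_n}-1\Big)\frac{\Phi_n}{z^n}+\Big(\frac{\Phi_n}{z^n}-G\Big).
\end{align*}
The second bracket tends to $0$ uniformly on $K$. In the first term, the bracket tends to $0$ uniformly, and $|\Phi_n/z^n|\le m_k/2+M_k$ on $K$ by \eqref{sa lemma2 4}. Hence the whole difference tends to $0$ uniformly on $K$, which is exactly the statement of Theorem \ref{Strong Asymptote for Psi, s>2 case}.

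There is no real obstacle here, since all the analytic work sits in Lemmas \ref{Norm convergence of Poly difference of Phi and Psi, s>2}--\ref{Uniform convergence of ratio of Psi and Phi, s>2}. The only point needing care is the $s>2$ decay. It enters through Lemma \ref{Norm convergence of Poly difference of Phi and Psi, s>2}, where it forces $\|Q_{n-1}\|\to0$, and this is what makes the limit depend only on $\mu_1$ and not on $\mu_2$.
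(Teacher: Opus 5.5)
Your proposal is correct and is essentially the paper's own proof. The paper also writes $\Psi_n/z^n-\Phi_n(\cdot;d\mu_1)/z^n=\bigl(\Phi_n(\cdot;d\mu_1)/z^n\bigr)\bigl(\Psi_n/\Phi_n(\cdot;d\mu_1)-1\bigr)$, bounds the first factor by \eqref{sa lemma2 4} and the second by Lemma \ref{Uniform convergence of ratio of Psi and Phi, s>2}, and concludes with the Szeg\H{o} asymptotics \eqref{sa lemma2 1}; your extra remarks (non-vanishing of $\Phi_n$ on $K$, and $D(\mu_1',0;d\mu_1)>0$) only make explicit what the paper leaves implicit.
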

\begin{proof}
It is enough to show that
\begin{align*}
\left|\dfrac{\Psi_n(z)}{z^n}-\dfrac{\Phi_{n}(z;d\mu_1)}{z^n}\right|<\epsilon , \quad \forall n \geq N \quad \text{and}\quad z\in K\subset \mathbb{C}\setminus \overline{\mathbb{D}}.	
\end{align*}
From Lemma \ref{Uniform convergence of ratio of Psi and Phi, s>2}, we have
\begin{align}\label{sa thm1 2}
\left|\dfrac{\Psi_n(z)}{\Phi_{n}(z;d\mu_1)}-1\right| <\dfrac{\epsilon}{2L},  \quad \forall n \geq N_1 \quad \text{and} \quad z\in K.
\end{align}
Hence  $ \forall n \geq N_1 $ and $ z\in K,$ and using \eqref{sa lemma2 4} and \eqref{sa thm1 2} ,  we have
\begin{align}\label{sa thm1 3}
				\left|\dfrac{\Psi_n(z)}{z^n}-\dfrac{\Phi_n(z;d\mu_1)}{z^n}\right|&=\left|\dfrac{\Phi_n(z,d\mu_1)}{z^n}\right|\left|\dfrac{\Psi_n(z)}{\Phi_n(z;d\mu_1)}-1\right| \leq \dfrac{\epsilon}{2}.
\end{align}
The conclusion follows from using  \eqref{sa lemma2 1} and \eqref{sa thm1 3}.
\end{proof}

\begin{remark}\label{observation on SA, for s=2}
In  \cite{JAT1999- Aptekarev Berriochoa Cachafeiro-Strong asymptotics, constructive approx2003- berriochoa cachafeiro- Strong asymptotics for sobolev op, JCAM1998- Martinez-Asymptotics of the sobolev op}, the Szeg\"{o} function in the limit of strong asymptotic formula depends on the measure $\mu_2$ of the Sobolev inner product given by \eqref{sa 1.4}. By contrast, in this work, the strong asymptotic formula depends on the measure $\mu_1$ with respect to the inner product given in \eqref{sa 1.4}.
\end{remark}
\begin{remark}
For  large $n$, $\Psi_n(z)$ behaves like $\Phi_n(z,d\mu_1)$ on the compact subsets of $|z|>1$.
\end{remark}
Lemma \ref{Uniform convergence of ratio of Psi and Phi, s>2} and its consequences lead to the question of finding the asymptotic value of the ratio of $\Psi_n(z)$ and $\Phi_{n}(z; d\mu_2)$, which is a monic orthogonal polynomial of degree $n$ with respect to the measure $\mu_2$. This leads to the following result.
\begin{corollary}\label{Ratio of Psi and Phi in terms of szego function}
\begin{align*}
\lim_{n \to \infty} \dfrac{\Psi_n(z)}{\Phi_{n}(z;d\mu_2)} = \dfrac{\overline{D(	\mu^{\prime}_1,\bar{z}^{-1};d\mu_1)^{-1}}}{\overline{D(\mu^{\prime}_2,\bar{z}^{-1};d\mu_2)^{-1}}} \cdot \dfrac{\overline{D(\mu^{\prime}_2,0;d\mu_2)^{-1}}}{\overline{D(\mu^{\prime}_1,0;d\mu_1)^{-1}}}
\end{align*}
uniformly on the compact subsets of $|z|>1$.
\end{corollary}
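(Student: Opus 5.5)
The plan is to factor the ratio through $z^n$ and use the two strong asymptotic limits we already have. For $z$ in a compact set $K\subset\{|z|>1\}$ and $n$ large, write
\begin{align*}
\dfrac{\Psi_n(z)}{\Phi_n(z;d\mu_2)}=\dfrac{\Psi_n(z)}{z^n}\cdot\left(\dfrac{\Phi_n(z;d\mu_2)}{z^n}\right)^{-1}.
\end{align*}
The first factor is covered by Theorem \ref{Strong Asymptote for Psi, s>2 case}: it converges uniformly on $K$ to
$\overline{D(\mu^{\prime}_1,\bar{z}^{-1};d\mu_1)^{-1}}\big/\overline{D(\mu^{\prime}_1,0;d\mu_1)^{-1}}$.

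For the second factor, $\mu_2$ is in the Szeg\"o class. The classical Szeg\"o asymptotic $\phi_n(z;d\mu_2)/z^n\to\overline{D(\mu_2^{\prime},\bar z^{-1};d\mu_2)^{-1}}$ holds uniformly on $K$. Combined with $\kappa_n(d\mu_2)\to D(\mu_2^{\prime},0;d\mu_2)^{-1}$ from \eqref{sa 1.1}, it gives the analogue of \eqref{sa lemma2 1} for $\mu_2$:
\begin{align*}
\dfrac{\Phi_n(z;d\mu_2)}{z^n}\to G_2(z):=\dfrac{\overline{D(\mu^{\prime}_2,\bar{z}^{-1};d\mu_2)^{-1}}}{\overline{D(\mu^{\prime}_2,0;d\mu_2)^{-1}}}
\end{align*}
uniformly on $K$. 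Here $D(\mu_2^{\prime},0;d\mu_2)$ is a positive real number, so conjugating it changes nothing.

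The only point needing care is inverting the second factor uniformly. $G_2$ is the exponential of an analytic function on $\{|z|>1\}$, so it is continuous and zero-free there, and hence $|G_2|\ge m>0$ on $K$. Repeating the argument of Lemma \ref{Bound for classical Phi w.r.t first measure}(1) with $\mu_2$ in place of $\mu_1$ gives $|\Phi_n(z;d\mu_2)/z^n|\ge m/2$ on $K$ for $n\ge N$. Together with
\begin{align*}
\left|\dfrac{1}{a_n}-\dfrac{1}{G_2}\right|=\dfrac{|a_n-G_2|}{|a_n||G_2|}\le \dfrac{2}{m^2}|a_n-G_2|,
\end{align*}
this shows that $z^n/\Phi_n(z;d\mu_2)\to 1/G_2(z)$ uniformly on $K$.

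Finally, the product of two uniformly convergent sequences is uniformly convergent when one of them is uniformly bounded on $K$. The bound \eqref{sa lemma2 4} supplies this for $\Psi_n/z^n$, via $\Psi_n/z^n=(\Phi_n(z;d\mu_1)/z^n)(\Psi_n/\Phi_n(z;d\mu_1))$ together with Lemma \ref{Uniform convergence of ratio of Psi and Phi, s>2}. Multiplying the two limits and rearranging the four Szeg\"o-function factors gives exactly the expression in the corollary. I expect no real obstacle: everything reduces to earlier results, and the only step that is more than quoting them is the uniform lower bound used to invert the second factor.
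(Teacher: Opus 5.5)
Your proposal is correct and is essentially the paper's argument. The paper factors $\Psi_n(z)/\Phi_n(z;d\mu_2)$ as $\bigl(\Psi_n(z)/\Phi_n(z;d\mu_1)\bigr)\cdot\bigl(\Phi_n(z;d\mu_1)/z^n\bigr)\big/\bigl(\Phi_n(z;d\mu_2)/z^n\bigr)$ and invokes Lemma \ref{Uniform convergence of ratio of Psi and Phi, s>2} together with the Szeg\"o asymptotics for $\mu_1$ and $\mu_2$, which is your route, since Theorem \ref{Strong Asymptote for Psi, s>2 case} is exactly that lemma combined with the $\mu_1$ asymptotics; your uniform lower bound for inverting $\Phi_n(z;d\mu_2)/z^n$ supplies a detail the paper leaves implicit.
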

\begin{proof}
Since
\begin{align*}
\dfrac{\Psi_n(z)}{\Phi_{n}(z;d\mu_2)}&= \dfrac{\Psi_n(z)}{\Phi_{n}(z; d\mu_1)} \cdot \dfrac{\dfrac{\Phi_{n}(z;d\mu_1)}{z^n}}{\dfrac{\Phi_{n}(z;d\mu_2)}{z^n}},
\end{align*}
the final result follows from  Lemma \ref{Uniform convergence of ratio of Psi and Phi, s>2}, and along with the fact that $\mu_1,\mu_2$ belongs to the Szeg\"o class.
\end{proof}
The above discussions lead to the following ratio asymptotic result.
\begin{theorem}\label{Ratio asymptotic for Psi, s>2}
$\displaystyle\lim_{n \to \infty} \dfrac{\Psi_{n+1}(z)}{\Psi_n(z)} =z $
			uniformly on the compact subsets of $|z|>1$, where $\left\{\Psi_n(z)\right\}$ be the sequence of Sobolev orthogonal polynomial with respect to the inner product \eqref{sa 1.4} with $s$- fixed, $s>2$ and $\lambda >0$.
\end{theorem}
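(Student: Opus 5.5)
We factor the ratio through the classical monic polynomials of $\mu_1$:
\begin{align*}
\dfrac{\Psi_{n+1}(z)}{\Psi_n(z)}=\dfrac{\Psi_{n+1}(z)}{\Phi_{n+1}(z;d\mu_1)}\cdot\dfrac{\Phi_{n}(z;d\mu_1)}{\Psi_n(z)}\cdot\dfrac{\Phi_{n+1}(z;d\mu_1)}{\Phi_n(z;d\mu_1)} .
\end{align*}
We then treat the three factors separately on a fixed compact set $K\subset\{|z|>1\}$.

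The first factor tends to $1$ uniformly on $K$ by Lemma \ref{Uniform convergence of ratio of Psi and Phi, s>2} applied with index $n+1$. For the second factor, the same lemma gives $\Psi_n/\Phi_n(\cdot;d\mu_1)\to 1$ uniformly on $K$. Hence for $n$ large we have $|\Psi_n(z)/\Phi_n(z;d\mu_1)|\geq 1/2$ on $K$, so the reciprocal is well defined there and also converges uniformly to $1$. This also shows that $\Psi_n$ has no zeros on $K$ for large $n$, so the quotient in the statement makes sense.

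For the third factor there are two routes. The first is the Nevai-class estimate from the introduction, $\left|\Phi_{n+1}(z,d\mu_1)/\Phi_n(z,d\mu_1)-z\right|\leq|\Phi_{n+1}(0,d\mu_1)|$ for $|z|\geq1$. Since $\mu_1\in\mathcal S$ implies $\mu_1\in\mathcal N$, the right-hand side tends to $0$ independently of $z$. The second route uses \eqref{sa lemma2 1}: we write $\Phi_{n+1}/\Phi_n=z\cdot(\Phi_{n+1}/z^{n+1})/(\Phi_n/z^n)$. Both quotients converge uniformly to $G(z)$, and $|G|\geq m_k>0$ on $K$, so the ratio tends to $z$ uniformly on $K$.

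Finally, we combine the three limits. We use that each factor is uniformly bounded on $K$ for large $n$ (with $|z|\leq R$ on $K$), so products of uniformly convergent, uniformly bounded sequences converge uniformly. The only point needing care is this uniform lower bound on the denominators. It is supplied by Lemma \ref{Bound for classical Phi w.r.t first measure} for $\Phi_n(\cdot;d\mu_1)$, and by the uniform ratio convergence above for $\Psi_n$. No step is a genuine obstacle.
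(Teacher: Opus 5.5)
Your proof is correct and follows the paper's route: the same three-factor decomposition through $\Phi_n(z;d\mu_1)$, with Lemma \ref{Uniform convergence of ratio of Psi and Phi, s>2} handling the two outer factors and $\mu_1\in\mathcal{N}$ handling the middle ratio. Your additional checks only make explicit details the paper leaves implicit: that $\Psi_n$ has no zeros on $K$ for large $n$, that the reciprocal factor converges uniformly, and the alternative route through the Szeg\H{o} limit $G$.
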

\begin{proof}
The result  follows from Lemma \ref{Uniform convergence of ratio of Psi and Phi, s>2}, along with the  fact that $\mu_1 \in \mathcal{N} $ and
\begin{align*}
				\dfrac{\Psi_{n+1}(z)}{\Psi_n(z)}= \dfrac{\Psi_{n+1}(z)}{\Phi_{n+1}(z;d\mu_1)}\cdot \dfrac{\Phi_{n+1}(z;d\mu_1)}{\Phi_{n}(z;d\mu_1)} \cdot \dfrac{\Phi_{n}(z;d\mu_1)}{\Psi_n(z)}.
\tag*{{\qedhere}}
\end{align*}
\end{proof}
Using the strong asymptotic formula and a direct consequence of the  Hurwitz's theorem, we obtain the following corollary:
\begin{corollary}\label{Hurwitz, zeros of Psi, s>2}
For every $\epsilon >0$ and $n$ sufficiently large, the zeros of $\Psi_{n}(z)$ are in $|z|<1+\epsilon$.
\end{corollary}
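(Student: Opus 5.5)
The plan is to argue by contradiction, using Theorem \ref{Strong Asymptote for Psi, s>2 case} together with Hurwitz's theorem applied on the closed exterior region $K_\epsilon:=\{z: 1+\epsilon\le |z|\le \infty\}$. Working with a neighbourhood of $\infty$ is more convenient than working on compact subsets of $|z|>1$. For this, pass to the reversed variable $w=1/z$ and the functions
\begin{align*}
F_n(w):= w^n\Psi_n(1/w), \qquad F(w):= \dfrac{\overline{D(\mu^{\prime}_1,\bar{w};d\mu_1)^{-1}}}{\overline{D(\mu^{\prime}_1,0;d\mu_1)^{-1}}},
\end{align*}
both analytic in $\mathbb{D}$, with $F_n(0)=1$ because $\Psi_n$ is monic. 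Zeros of $\Psi_n$ in $|z|\ge 1+\epsilon$ correspond exactly to zeros of $F_n$ in $|w|\le (1+\epsilon)^{-1}$. Here $F_n$ is a polynomial of degree at most $n$.

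First, I will rephrase Theorem \ref{Strong Asymptote for Psi, s>2 case} as the statement that $F_n(w)\to F(w)$ uniformly on compact subsets of $0<|w|<1$. Next, I will upgrade this to uniform convergence on the closed disc $|w|\le r$ for every $r<1$. The maximum modulus principle does this: the analytic function $F_n-F$ has its sup over $|w|\le r$ attained on the circle $|w|=r$, and that circle is a compact subset of $0<|w|<1$. Then I note that $F$ never vanishes in $\mathbb{D}$, since $D(\mu_1^{\prime},\cdot\,;d\mu_1)$ is the exponential of an analytic function (this uses $\mu_1\in\mathcal S$). Hence $F$ is nonvanishing on $\overline{D(0,r)}$, where $r=(1+\epsilon/2)^{-1}$.

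Finally, Hurwitz's theorem, or more concretely the bound $|F_n-F|<\min_{|w|\le r}|F|$ for $n\ge N$, shows that $F_n$ has no zeros in $|w|\le r$ once $n$ is large. Translating back, $\Psi_n$ has no zeros in $|z|\ge 1+\epsilon/2$. This gives the claim, with margin, that all zeros of $\Psi_n$ lie in $|z|<1+\epsilon$.

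The only delicate point is the behaviour at $\infty$, i.e.\ $w=0$, where the theorem as stated only gives convergence on compact subsets of $|z|>1$ and says nothing about unbounded regions. I handle this with the maximum principle step above. As an alternative, I could apply the lower bound $|\Psi_n(z)|\ge c|z|^n$ on the annulus $1+\epsilon\le |z|\le 2$, which follows from Lemma \ref{Bound for classical Phi w.r.t first measure} and Lemma \ref{Uniform convergence of ratio of Psi and Phi, s>2}. I would then combine it with the maximum principle for $\Psi_n(z)/z^n$ on $|z|\ge 2$, since that function is analytic at $\infty$.
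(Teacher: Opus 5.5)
Your proposal is correct and follows the paper's route. The paper obtains the corollary in one line from the strong asymptotic formula of Theorem~\ref{Strong Asymptote for Psi, s>2 case}, the non-vanishing of the Szeg\"o-type limit, and Hurwitz's theorem. Your passage to $w=1/z$ with the maximum modulus principle adds uniform control on the unbounded set $|z|\ge 1+\epsilon$, including $z=\infty$, which Hurwitz on compact subsets of $|z|>1$ alone does not give and which the paper leaves implicit. In your fallback argument, apply the maximum principle to $\Psi_n(z)/z^n-G(z)$ rather than to $\Psi_n(z)/z^n$, since an upper bound alone does not exclude zeros.
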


The following result is a direct consequence of the strong asymptotic formulas established in this section and is stated without proof.
			
\begin{theorem}
Let  $\{\Psi_{n}(z)\}_{n\geq 0}$ be the sequence of monic Sobolev orthogonal polynomials for $s>2$, $s$-fixed with respect to the inner product of the form \eqref{sa 1.4}.
Let $K$ be the compact subset of $|z|>1$. Then the following holds:
\begin{enumerate}
					\item[\rm(1)] $\lim_{n \to \infty} \left( \sup_{z\in K} |\Psi_n(z)|\right)^{1/n} = \max_{z\in K} |z|$.
					\item[\rm(2)] $\lim_{n \to \infty} \left( \inf_{z\in K} |\Psi_n(z)|\right)^{1/n} = \min_{z\in K} |z|$.
\end{enumerate}
\end{theorem}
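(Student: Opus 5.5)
The idea is to combine Theorem \ref{Strong Asymptote for Psi, s>2 case} with the fact that its limit is bounded above and below on $K$. Write
\begin{align*}
G(z):=\dfrac{\overline{D(\mu^{\prime}_1,\bar{z}^{-1};d\mu_1)^{-1}}}{\overline{D(\mu^{\prime}_1,0;d\mu_1)^{-1}}}.
\end{align*}
As in the proof of Lemma \ref{Bound for classical Phi w.r.t first measure}, $G$ is continuous and zero-free on the compact set $K\subset\{|z|>1\}$. Hence
\begin{align*}
0<m_K:=\min_{z\in K}|G(z)|\leq M_K:=\max_{z\in K}|G(z)|<\infty.
\end{align*}
By Theorem \ref{Strong Asymptote for Psi, s>2 case}, $\Psi_n(z)/z^n\to G(z)$ uniformly on $K$. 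So there is an $N$ such that for $n\geq N$ and all $z\in K$,
\begin{align*}
\dfrac{m_K}{2}|z|^n\leq |\Psi_n(z)|\leq \left(M_K+\dfrac{m_K}{2}\right)|z|^n .
\end{align*}

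For (1), set $R:=\max_{z\in K}|z|$. Taking the supremum over $K$ in the upper bound gives $\sup_K|\Psi_n|\leq (M_K+m_K/2)R^n$. For the lower bound, pick $z_0\in K$ with $|z_0|=R$; then $\sup_K|\Psi_n|\geq |\Psi_n(z_0)|\geq (m_K/2)R^n$. Taking $n$-th roots, both constants satisfy $c^{1/n}\to 1$, so the limit is $R$.

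For (2), set $r:=\min_{z\in K}|z|>1$. Taking the infimum in the lower bound gives $\inf_K|\Psi_n|\geq (m_K/2)r^n$. Choosing $z_1\in K$ with $|z_1|=r$ gives $\inf_K|\Psi_n|\leq |\Psi_n(z_1)|\leq (M_K+m_K/2)r^n$. Taking $n$-th roots again yields the limit $r$.

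The only step needing care is the uniform two-sided comparison with $|z|^n$. It rests on $G$ being zero-free on $K$, which holds because $D(\mu_1^{\prime},\cdot;d\mu_1)$ is an exponential and hence never vanishes in $\mathbb{D}$, and $\bar z^{-1}\in\mathbb{D}$ for $z\in K$. The points where $|z|$ attains its extreme values exist by compactness of $K$. Everything else is a direct $n$-th root computation.
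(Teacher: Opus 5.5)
Your proof is correct and follows essentially the same approach as the paper. The paper states this result without proof, as a direct consequence of the strong asymptotic formula. Your argument is that consequence made explicit: from $\Psi_n(z)/z^n\to G(z)$ uniformly on $K$, with $G$ zero-free there, you obtain the two-sided bound $\tfrac{m_K}{2}|z|^n\le|\Psi_n(z)|\le\bigl(M_K+\tfrac{m_K}{2}\bigr)|z|^n$. This mirrors the paper's lemma bounding $\Phi_n(z;d\mu_1)$ on $K$, and taking $n$-th roots at the points of $K$ where $|z|$ is extremal finishes both parts.
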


	
\section{ Strong Asymptotics  of the Sobolev Orthogonal Polynomials for  \texorpdfstring{$s=2$}{s=2} case.}
In this section, we will discuss the strong asymptotic result and its consequences for the Sobolev orthogonal polynomials with respect to the Sobolev  inner product
\begin{align}\label{sa ip s=2 3.1}
			\langle f,g\rangle _{\frac{\lambda}{n^2}}:= \int_{\mathbb{T}} f(z)\overline{g(z)} d\mu_1+\dfrac{\lambda}{n^2} \int_{\mathbb{T}} f^{\prime}(z)\overline{g^{\prime}(z)}d\mu_2
\end{align}
where $\mu_1$, $\mu_2$ are as defined in \eqref{sa 1.4}.  Before proceeding, we establish a preliminary result that is analogous to $ \mathrm{(1)}$ of Theorem \ref{JAT: Properties of s=2 with same measure}  (see also \cite{JAT2011-MartinezSimon-L2DerivativeOPUC}).
\begin{proposition}\label{Inequality of limsup/inf Ratio of min norm to Phi norm, s=2}
Let  $\{\tilde{\Psi}_n(z)\}_{n\geq0}$ be the corresponding sequence of monic orthogonal polynomials with respect to  \eqref{sa ip s=2 3.1}, where $\lambda >0$. If we define
\begin{align*}
				\sigma_n = \min \{ \|P\|_{\frac{\lambda}{n^2}} \| P(z)=z^n+ \ldots\}
\end{align*}
Then the following inequalities  hold:
\begin{itemize}
\item[\rm (1)] For constants $C_1>0$ and $C_2>0$, we have
\begin{align}\label{sa prop 1 1}
					1+\lambda C^2_1 \leq \liminf_{n \to \infty} \dfrac{\sigma^2_n}{\|\Phi_n(z;d\mu_1)\|_{L^2(d\mu_1)}^2}\leq \limsup_{n \to \infty} \dfrac{\sigma^2_n}{\|\Phi_n(z;d\mu_1)\|_{L^2(d\mu_1)}^2} \leq 1+ \lambda C^2_2.
\end{align}
\item[\rm (2)]  For constants $K_1>0$ and $K_2>0$, we have
\begin{align}\label{sa prop1 2}
					K_{1}^2+\lambda \leq \liminf_{n \to \infty} \dfrac{\sigma^2_n}{\|\Phi_n(z;d\mu_2)\|_{L^2(d\mu_2)}^2}\leq \limsup_{n \to \infty} \dfrac{\sigma^2_n}{\|\Phi_n(z;d\mu_2)\|_{L^2(d\mu_2)}^2} \leq K_{2}^2+ \lambda.
\end{align}
\end{itemize}
\end{proposition}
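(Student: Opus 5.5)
Both inequalities follow from two-sided bounds on $\sigma_n^2$, obtained from the minimal norm property and the norm equivalence \eqref{boundedness of the Radon-Nikodym derivatives}. Since $\mu_1,\mu_2$ are probability measures in the Szeg\"o class, \eqref{sa 1.1} gives $\|\Phi_n(z;d\mu_j)\|^2_{L^2(d\mu_j)}=\kappa_n(d\mu_j)^{-2}\to D(\mu_j',0;d\mu_j)^2>0$ for $j=1,2$. So $\sigma_n^2$ only needs to be compared with a quantity that is bounded above and below. The constants $C_1,C_2,K_1,K_2$ will come from $c_j,C_j$ and from the limits of $\kappa_n(d\mu_j)^{-1}$. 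We never need their exact values, because the statement only asserts the existence of such positive constants.

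\emph{Upper bounds.} Test the minimal norm property with $P=\Phi_n(z;d\mu_1)$:
\begin{align*}
\sigma_n^2\le \|\Phi_n(z;d\mu_1)\|^2_{L^2(d\mu_1)}+\frac{\lambda}{n^2}\|\Phi_n'(z;d\mu_1)\|^2_{L^2(d\mu_2)}.
\end{align*}
As in the proof of Lemma \ref{Norm convergence of Poly difference of Phi and Psi, s>2}, the normal $L^2$-derivative behaviour of $\mu_1$ together with \eqref{sa 1.1} gives $\|\Phi_n'(z;d\mu_1)\|_{L^2(d\mu_1)}=n\,\kappa_n(d\mu_1)^{-1}(1+o(1))$. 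Norm equivalence then bounds the $L^2(d\mu_2)$ norm by $C_2'\,n\|\Phi_n(z;d\mu_1)\|_{L^2(d\mu_1)}(1+o(1))$. Dividing by $\|\Phi_n(z;d\mu_1)\|^2_{L^2(d\mu_1)}$ yields the $\limsup$ bound $1+\lambda C_2^2$ in \eqref{sa prop 1 1}.

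For \eqref{sa prop1 2}, test instead with $P=\Phi_n(z;d\mu_2)$. The derivative term becomes $\frac{\lambda}{n^2}n^2\|\Phi_n(z;d\mu_2)\|^2_{L^2(d\mu_2)}(1+o(1))$ by the normal derivative behaviour of $\mu_2$. The $L^2(d\mu_1)$ term is at most $K_2^2\|\Phi_n(z;d\mu_2)\|^2_{L^2(d\mu_2)}$ with $K_2^2=\sup\mu_1'/\mu_2'$. This gives $K_2^2+\lambda$.

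\emph{Lower bounds.} This is the main step. Let $\tilde\Psi_n$ attain $\sigma_n$. By extremality of $\Phi_n(z;d\mu_1)$ in $L^2(d\mu_1)$, we have $\|\tilde\Psi_n\|^2_{L^2(d\mu_1)}\ge\|\Phi_n(z;d\mu_1)\|^2_{L^2(d\mu_1)}$, which handles the first summand. For the derivative part we need a lower bound of the form $\|P'\|_{L^2(d\mu_2)}\gtrsim n$ for every monic $P$ of degree $n$. I would get this by noting that $P'/n$ is monic of degree $n-1$, hence
\begin{align*}
\|P'\|^2_{L^2(d\mu_2)}\ge n^2\|\Phi_{n-1}(z;d\mu_2)\|^2_{L^2(d\mu_2)}=n^2\kappa_{n-1}(d\mu_2)^{-2}.
\end{align*}
Combined with $\kappa_{n-1}(d\mu_2)/\kappa_n(d\mu_2)\to1$, this gives $\sigma_n^2\ge\|\Phi_n(z;d\mu_1)\|^2_{L^2(d\mu_1)}+\lambda\,\kappa_{n-1}(d\mu_2)^{-2}$. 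Dividing by $\kappa_n(d\mu_1)^{-2}$ produces $1+\lambda C_1^2$ with $C_1=D(\mu_2',0;d\mu_2)/D(\mu_1',0;d\mu_1)$, which is also bounded below via $c_j$.

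For the lower bound in \eqref{sa prop1 2}, the same derivative estimate gives the $\lambda$ term exactly. The first term is handled by $\|\tilde\Psi_n\|^2_{L^2(d\mu_1)}\ge c\|\tilde\Psi_n\|^2_{L^2(d\mu_2)}\ge c\,\kappa_n(d\mu_2)^{-2}$, giving $K_1^2=c$.

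The only delicate point is making the $o(1)$ terms and the ratio $\kappa_{n-1}/\kappa_n\to1$ explicit so they pass to $\liminf/\limsup$. This follows from Szeg\"o class membership (indeed $\mu_j\in\mathcal N$).
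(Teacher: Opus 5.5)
Your proposal is correct and follows essentially the same route as the paper. For the upper bounds you use the same test polynomials $\Phi_n(z;d\mu_1)$ and $\Phi_n(z;d\mu_2)$. For the lower bounds you use the same two extremal comparisons: $\tilde\Psi_n$ against $\Phi_n(z;d\mu_1)$ in $L^2(d\mu_1)$, and $\tilde\Psi_n'/n$ against $\Phi_{n-1}(z;d\mu_2)$ in $L^2(d\mu_2)$.

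The only differences are minor. In (1) you evaluate $\kappa_n(d\mu_1)^2/\kappa_{n-1}(d\mu_2)^2$ directly through the Szeg\"o limits, which gives the explicit and at least as sharp constant $C_1=D(\mu_2',0;d\mu_2)/D(\mu_1',0;d\mu_1)$; the paper instead passes through norm equivalence, the extremality of $\Phi_{n-1}(z;d\mu_1)$, and $\|\Phi_{n-1}\|/\|\Phi_n\|\to 1$. In (2) you apply norm equivalence to $\tilde\Psi_n$ itself rather than to $\Phi_n(z;d\mu_1)$.
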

\begin{proof}
We start with the proof of the inequality \eqref{sa prop 1 1} of the theorem.
By the minimal norm property of the monic orthogonal polynomials, we have
\begin{align}\label{sa prop1 3}
					\| \tilde{\Psi}_n(z)\|^2_{L^2(d\mu_1)} \geq  \| \Phi_{n}(z;d\mu_1)\|^2_{L^2(d\mu_1)}
\end{align}
and
\begin{align}\label{sa prop1 4}
					\| \dfrac{\tilde{\Psi}^{\prime}_n(z)}{n}\|^2_{L^2(d\mu_2)} \geq  \| \Phi_{n-1}(z;d\mu_2)\|^2_{L^2(d\mu_2)}.
\end{align}
Since
\begin{align}\label{sa prop1 5}
					\sigma_n^2 = \| \tilde{\Psi}_n(z)\|^2_{\frac{\lambda}{n^2}} = \| \tilde{\Psi}_n(z)\|^2_{L^2(d\mu_1)}+\dfrac{\lambda}{n^2}\|\tilde{\Psi}^{\prime}_n(z)\|^2_{L^2(d\mu_2)},
\end{align}
using \eqref{sa prop1 3} and \eqref{sa prop1 4} in  \eqref{sa prop1 5}, we get
\begin{align}\label{sa prop1 6}
					\| \tilde{\Psi}_n(z)\|^2_{\frac{\lambda}{n^2}} \geq \| \Phi_{n}(z;d\mu_1)\|^2_{L^2(d\mu_1)}+ \lambda \| \Phi_{n-1}(z;d\mu_2)\|^2_{L^2(d\mu_2)}.
\end{align}
Since $\|.\|_{L^2(d\mu_1)}$ and $\|.\|_{L^2(d\mu_2)}$ are equivalent, there exist constants $C_1,C_2>0$ such that
\begin{align}\label{sa prop1 7}
					C_1 \|. \|_{L^2(d\mu_1)} \leq \|.\|_{L^2(d\mu_2)} \leq C_2 \|.\|_{L^2(d\mu_1)} .
\end{align}
				From  \eqref{sa prop1 6} and \eqref{sa prop1 7}, we get
\begin{align*}
					\| \tilde{\Psi}_n(z)\|^2_{\frac{\lambda}{n^2}} 	\geq \| \Phi_{n}(z;d\mu_1)\|^2_{L^2(d\mu_1)}+ \lambda C^2_1\| \Phi_{n-1}(z;d\mu_2)\|^2_{L^2(d\mu_1)},
\end{align*}
which using the  minimal norm property  can be written as
\begin{align}\label{sa prop1 8}
					\dfrac{	\| \tilde{\Psi}_n(z)\|^2_{\frac{\lambda}{n^2}} }{\| \Phi_{n}(z;d\mu_1)\|^2_{L^2(d\mu_1)}} \geq 1+ \lambda C_1^2 \dfrac{\| \Phi_{n-1}(z;d\mu_1)\|^2_{L^2(d\mu_1)}}{\| \Phi_{n}(z;d\mu_1)\|^2_{L^2(d\mu_1)}}.
\end{align}
Since $\mu_1 \in \mathcal{S}$ implies $\lim_{n \to \infty}  \dfrac{\| \Phi_{n-1}(z;d\mu_1)\|^2_{L^2(d\mu_1)}}{\| \Phi_{n}(z;d\mu_1)\|^2_{L^2(d\mu_1)}}=1$, from \eqref{sa prop1 8}, we get the left side of the inequality \eqref{sa prop 1 1}.
			
On the other hand, by using minimal norm property, we obtain
\begin{align*}
					\sigma_n^2 = 	\| \tilde{\Psi}_n(z)\|^2_{\frac{\lambda}{n^2}}  \leq \| \Phi_{n}(z;d\mu_1)\|^2_{L^2(d\mu_1)} +\dfrac{\lambda}{n^2} \| \Phi^{\prime}_{n}(z;d\mu_1)\|^2_{L^2(d\mu_2)}
\end{align*}
leading to
\begin{align*}
\dfrac{	\| \tilde{\Psi}_n(z)\|^2_{\frac{\lambda}{n^2}} }{\| \Phi_{n}(z;d\mu_1)\|^2_{L^2(d\mu_1)}} \leq 1+\lambda C_2^2 \left(\dfrac{\| \Phi^{\prime}_{n}(z;d\mu_1)\|_{L^2(d\mu_1)}}{n \| \Phi_{n}(z;d\mu_1)\|_{L^2(d\mu_1)}}\right)^2,
\end{align*}
which by the normal derivative behavior of $\mu_1 $ implies the right hand side of the inequality of \eqref{sa prop 1 1}. This completes the proof of \eqref{sa prop 1 1}.

Now we prove the second inequality \eqref{sa prop1 2} of the theorem.
Since $\|.\|_{L^2(d\mu_1)}$ and $\|.\|_{L^2(d\mu_2)}$ are equivalent, there exist constants $K_1,K_2>0$ such that
\begin{align*}
					K_1 \|. \|_{L^2(d\mu_2)} \leq \|.\|_{L^2(d\mu_1)} \leq K_2 \|.\|_{L^2(d\mu_2)} .
\end{align*}
Again, using the minimum norm property, we get $\| \tilde{\Psi}_n(z)\|^2_{\frac{\lambda}{n^2}}\leq \|\Phi_n(z;d\mu_2) \|^2_{\frac{\lambda}{n^2}} $. Expanding the right hand side of the inequality, we get
\begin{align}\label{RHS Sobolev norm expansion}
					\sigma_n^2 =	\|\tilde{\Psi}_n(z)\|^2_{\frac{\lambda}{n^2}}\leq \|\Phi_n(z;d\mu_2) \|^2_{L^2(d\mu_1)}+\dfrac{\lambda}{n^2} \|\Phi^{\prime}_n(z;d\mu_2) \|^2_{L^2(d\mu_2)}.
\end{align}
From \eqref{RHS Sobolev norm expansion}, we can write
\begin{align}\label{RHS inequality}
					\dfrac{\|\tilde{\Psi}_n(z)\|^2_{\frac{\lambda}{n^2}}}{\|\Phi_n(z;d\mu_2)\|^2_{L^2(d\mu_2)}}\leq \dfrac{\|\Phi_n(z;d\mu_2)\|^2_{L^2(d\mu_1)}}{\|\Phi_n(z;d\mu_2)\|^2_{L^2(d\mu_2)}}+ \dfrac{\lambda}{n^2} \dfrac{\|\Phi^{\prime}_n(z;d\mu_2)\|^2_{L^2(d\mu_2)}}{\|\Phi_n(z;d\mu_2)\|^2_{L^2(d\mu_2)}}.
\end{align}
Using norm equivalence and normal derivative behavior of $\mu_2$ in  \eqref{RHS inequality}, we have the right side inequality of \eqref{sa prop1 2}. Again, using minimum norm property and norm equivalence, we can write
\begin{align*}
					\|\tilde{\Psi}_n(z)\|^2_{L^2(d\mu_1)} & \geq \| \Phi_n(z;d\mu_1)\|^2_{L^2(d\mu_1)}\\
                                                          &\geq K^2_1 \| \Phi_n(z;d\mu_1)\|^2_{L^2(d\mu_2)} \geq K_1^2 \|\Phi_n(z;d\mu_2)\|^2_{L^2(d\mu_2)}
\end{align*}
leading to
\begin{align*}
					\|\tilde{\Psi}_n(z)\|^2_{\frac{\lambda}{n^2}}\geq  K_1^2 \|\Phi_n(z;d\mu_2)\|^2_{L^2(d\mu_2)} + \lambda \| \Phi_{n-1}(z;d\mu_2)\|^2_{L^2(d\mu_2)}.
\end{align*}
Now dividing both sides of the above by $\|\Phi_n(z;d\mu_2)\|^2_{L^2(d\mu_2)} $ and using the fact that $\mu_2 \in \mathcal{S}$, we get
\begin{align*}
					\liminf_{n \to \infty} \dfrac{\sigma^2_n}{\|\Phi_n(z;d\mu_2)\|_{L^2(d\mu_2)}^2}\geq 	K_{1}^2+\lambda
\end{align*}
which is nothing but the left side inequality of the expression \eqref{sa prop1 2} and the proof is complete.
\end{proof}
Let $M$ be a fixed positive integer, and assume that $\Phi_M(z;d\mu_2) =\sum_{k=0}^{M} a_{k,M} z^k$ with $a_{M,M}=1$. Now for every $n-1 \geq M$, we define the polynomials $	\Pi_{n,M}(z)$ by
\begin{align*}
			\Pi_{n,M}(z)= n \int_{0}^{z} z^{n-M-1} \Phi_M(z;d\mu_2) dz .
\end{align*}
		Then the polynomials $\{	\Pi_{n,M}(z)\}_{n-1 \geq M}$ satisfy the following properties:
\begin{proposition}\cite{constructive approx2003- berriochoa cachafeiro- Strong asymptotics for sobolev op} \label{Results for the defined polynomial above  }
			The following holds:
\begin{enumerate}
				\item [\rm(1)]$\|\Pi_{n,M}^{\prime}(z)\|^2_{L^2(d\mu_2)}= n^2 \|\Phi_M(z;d\mu_2)\|^2_{L^2(d\mu_2)}.$
				\item [\rm(2)] $\displaystyle\lim_{n \rightarrow \infty}	\|\Pi_{n,M}(z)\|^2_{L^2(d\mu_1)}=\|\Phi_M(z;d\mu_2)\|^2_{L^2(d\mu_1)}$.
				\item [\rm(3)] $\displaystyle\lim_{M\to \infty}\displaystyle\lim_{n \to \infty} \dfrac{\Pi_{n,M}(z)}{z^n} = \dfrac{\overline{D(	\mu^{\prime}_2,\bar{z}^{-1};d\mu_2)^{-1}}}{\overline{D(\mu^{\prime}_2,0;d\mu_2)^{-1}}}$ uniformly on the compact subset $K \in \mathbb{C}\setminus \overline{\mathbb{D}}$.
\end{enumerate}
\end{proposition}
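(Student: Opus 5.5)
The approach is to compute $\Pi_{n,M}$ explicitly. All three statements then reduce to elementary facts about a polynomial of fixed degree $M$, together with the Szeg\H{o} asymptotics for $\mu_2$.

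\emph{Step 1: explicit form.} Integrating term by term,
\begin{align*}
\Pi_{n,M}(z)= \sum_{k=0}^{M} a_{k,M}\,\frac{n}{n-M+k}\, z^{n-M+k}
= z^{n-M}\bigl(\Phi_M(z;d\mu_2)+E_{n,M}(z)\bigr),
\end{align*}
where
\begin{align*}
E_{n,M}(z)=\sum_{k=0}^{M} a_{k,M}\Bigl(\frac{n}{n-M+k}-1\Bigr)z^k .
\end{align*}
Since $a_{M,M}=1$, the polynomial $\Pi_{n,M}$ is monic of degree $n$. Each coefficient of $E_{n,M}$ is $a_{k,M}(M-k)/(n-M+k)=\mathcal{O}(1/n)$ with $M$ fixed. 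Because $E_{n,M}$ has fixed degree $M$, this gives $\sup_{|z|\le \rho}|E_{n,M}(z)|\to 0$ as $n\to\infty$ for every fixed $\rho>0$.

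\emph{Step 2: parts (1) and (2).} From the definition, $\Pi_{n,M}'(z)=n z^{n-M-1}\Phi_M(z;d\mu_2)$. On $\mathbb{T}$ we have $|z^{n-M-1}|=1$, so $|\Pi'_{n,M}|^2=n^2|\Phi_M(z;d\mu_2)|^2$ pointwise there. Integrating against $d\mu_2$ gives (1) as an exact identity.

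For (2), Step 1 and $|z^{n-M}|=1$ on $\mathbb{T}$ give
\begin{align*}
\|\Pi_{n,M}\|_{L^2(d\mu_1)}=\|\Phi_M(\cdot;d\mu_2)+E_{n,M}\|_{L^2(d\mu_1)} .
\end{align*}
By the triangle inequality, this differs from $\|\Phi_M(\cdot;d\mu_2)\|_{L^2(d\mu_1)}$ by at most
\begin{align*}
\|E_{n,M}\|_{L^2(d\mu_1)}\le \mu_1(\mathbb{T})^{1/2}\sup_{\mathbb{T}}|E_{n,M}|\to 0,
\end{align*}
which proves (2).

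\emph{Step 3: part (3).} On a compact $K\subset\{|z|>1\}$, Step 1 gives
\begin{align*}
\frac{\Pi_{n,M}(z)}{z^n}=\frac{\Phi_M(z;d\mu_2)}{z^M}+\frac{E_{n,M}(z)}{z^M}.
\end{align*}
Here $|z^{-M}|\le 1$ on $K$ and $\sup_K|E_{n,M}|\to0$. Hence, for each fixed $M$, the inner limit is $\Phi_M(z;d\mu_2)/z^M$, uniformly on $K$.

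Then let $M\to\infty$. Since $\mu_2\in\mathcal{S}$, the same Szeg\H{o} asymptotics used in \eqref{sa lemma2 1}, now applied to $\mu_2$, give
\begin{align*}
\frac{\Phi_M(z;d\mu_2)}{z^M}\to \frac{\overline{D(\mu_2',\bar z^{-1};d\mu_2)^{-1}}}{\overline{D(\mu_2',0;d\mu_2)^{-1}}}
\end{align*}
uniformly on $K$. This is (3).

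There is no real obstacle here. The only point needing care is that the limits in (3) are iterated: $n\to\infty$ first with $M$ fixed, which is what makes $E_{n,M}$ a fixed-degree polynomial with vanishing coefficients.
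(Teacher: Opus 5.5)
Your proof is correct and complete. The closed form $\Pi_{n,M}(z)=z^{n-M}\bigl(\Phi_M(z;d\mu_2)+E_{n,M}(z)\bigr)$, with $E_{n,M}$ of fixed degree $M$ and coefficients $\mathcal{O}(1/n)$, gives (1) as an exact identity and (2) by the triangle inequality on $\mathbb{T}$. It gives (3) by taking the uniform inner limit $\Phi_M(z;d\mu_2)/z^M$ and then applying the Szeg\H{o} asymptotics for $\mu_2$.

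The paper gives no argument of its own here; it only cites Berriochoa--Cachafeiro. Your direct computation therefore supplies a self-contained verification along the natural route.
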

If  $M$ is a fixed positive integer, and if $n-1\geq M $, then $\tilde{\Psi}_n(z)$ can be written as
\begin{align}\label{sa 3.13}
			\tilde{\Psi}_n(z)= \Pi_{n,M}(z)-\sum_{k=0}^{n-1} \beta_{n,k,M} \tilde{\Psi}_k(z).
\end{align}
		Now we will study the coefficients of the above expansion satisfied by the Sobolev orthogonal polynomial $\tilde{\Psi}_n(z)$.
\begin{theorem}\label{coefficients of infinite cut of Psi-bar, s=2}
Let $\{\beta_{n,k,M}\}_{k=0}^{n-1}$ be the coefficients of the recurrence \eqref{sa 3.13}. Then, given $\epsilon >0$, there exists a natural number $M_0$ such that for every $M \geq M_0$, there exists a natural number $n_M$ such that for all $n \geq n_M$, we have
\begin{align}\label{sa thm3.1 3.14}
				\dfrac{1}{n^3}\left[	\left|\beta_{n,0,M}\right|^2 n^2+\left|\beta_{n,1,M}\right|^2 1^2+\ldots+	\left|\beta_{n,n-1,M}\right|^2(n-1)^2 \right]\leq \epsilon.
\end{align}
\end{theorem}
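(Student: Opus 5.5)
The plan is to control the remainder
\[
R_n(z):=\Pi_{n,M}(z)-\tilde{\Psi}_n(z)=\sum_{k=0}^{n-1}\beta_{n,k,M}\tilde{\Psi}_k(z),
\]
a polynomial of degree at most $n-1$, in the $n$-th Sobolev norm $\|\cdot\|_{\frac{\lambda}{n^2}}$. I would then turn this into the weighted coefficient bound \eqref{sa thm3.1 3.14}, using the extra factor $1/n$ in $n^{-3}$ as slack.

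\emph{Step 1: a Pythagorean identity and a bound on $R_n$.} Since $\tilde{\Psi}_n$ is orthogonal, in $\langle\cdot,\cdot\rangle_{\frac{\lambda}{n^2}}$, to all polynomials of degree $\le n-1$, we have
\[
\|\Pi_{n,M}\|^2_{\frac{\lambda}{n^2}}=\sigma_n^2+\|R_n\|^2_{\frac{\lambda}{n^2}}.
\]
By Proposition \ref{Results for the defined polynomial above  }, parts (1)--(2),
\[
\|\Pi_{n,M}\|^2_{\frac{\lambda}{n^2}}\to\|\Phi_M(z;d\mu_2)\|^2_{L^2(d\mu_1)}+\lambda\|\Phi_M(z;d\mu_2)\|^2_{L^2(d\mu_2)}
\quad\text{as } n\to\infty .
\]
This is bounded uniformly in $M$, because $\|\Phi_M(z;d\mu_2)\|_{L^2(d\mu_2)}\le 1$ and the $L^2(d\mu_1)$ and $L^2(d\mu_2)$ norms are equivalent. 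Hence there is a constant $C$, independent of $M$, with $\|R_n\|^2_{\frac{\lambda}{n^2}}\le C$ for all $n\ge n_M$. In particular
\[
\|R_n\|_{L^2(d\mu_1)}\le C^{1/2},\qquad \frac{\lambda}{n^2}\|R_n'\|^2_{L^2(d\mu_2)}\le C .
\]
I would also try to sharpen this to $\limsup_n\|R_n\|^2_{\frac{\lambda}{n^2}}\le\epsilon$ for $M\ge M_0$, by combining the lower bound \eqref{sa prop1 2} of Proposition \ref{Inequality of limsup/inf Ratio of min norm to Phi norm, s=2} with $\|\Phi_M(z;d\mu_2)\|^2_{L^2(d\mu_2)}\to\lim_n\|\Phi_n(z;d\mu_2)\|^2_{L^2(d\mu_2)}$. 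However, the constants $K_1$, $K_2$ there are not sharp, so I do not expect this to close. For that reason I would aim only for boundedness and get the decay from the factor $1/n$.

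\emph{Step 2: from $\|R_n\|$ to the coefficients.} This is the main obstacle. The $\tilde{\Psi}_k$ with $k<n$ are orthogonal for their own inner products $\langle\cdot,\cdot\rangle_{\frac{\lambda}{k^2}}$, not for the $n$-th one. So the $\beta_{n,k,M}$ cannot be read off directly from $\|R_n\|_{\frac{\lambda}{n^2}}$. What I would try first is a Gram-matrix comparison. For each $k$, $\tilde{\Psi}_k$ is the minimal element of $\langle\cdot,\cdot\rangle_{\frac{\lambda}{k^2}}$ among monic polynomials of degree $k$. Moreover, $\frac{\lambda}{k^2}\|\tilde{\Psi}_k'\|^2_{L^2(d\mu_2)}\ge\lambda\|\Phi_{k-1}(z;d\mu_2)\|^2_{L^2(d\mu_2)}$ is bounded below uniformly in $k$, by the argument of Proposition \ref{Inequality of limsup/inf Ratio of min norm to Phi norm, s=2}. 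Using these two facts, I would show inductively from the top coefficient downward that
\[
\sum_{k=1}^{n-1}|\beta_{n,k,M}|^2k^2\le C'\bigl(\|R_n\|^2_{L^2(d\mu_1)}+\|R_n'\|^2_{L^2(d\mu_2)}\bigr)\le C''n^2 .
\]
The $k=0$ term, weighted by $n^2$, is handled separately: $\beta_{n,0,M}$ is essentially the constant term of $R_n$, so $|\beta_{n,0,M}|\le\|R_n\|_{L^2(dm)}+\sum_{k\ge1}|\beta_{n,k,M}||\tilde{\Psi}_k(0)|$, and then I would use Cauchy--Schwarz against the bounds $|\tilde{\Psi}_k(0)|=O(1)$.

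\emph{Step 3: conclusion.} Dividing by $n^3$, the left side of \eqref{sa thm3.1 3.14} is $O(1/n)$ plus possibly an $O(1)$ contribution from $|\beta_{n,0,M}|^2$. That $O(1)$ contribution is where the choice of $M_0$ must enter, through Proposition \ref{Results for the defined polynomial above  }(3): for large $M$ the constant term of $\Pi_{n,M}$ vanishes and $\Pi_{n,M}/z^n$ approximates the normalized Szeg\H{o} function of $\mu_2$. Choosing $M\ge M_0$ and then $n\ge n_M$ should make the total at most $\epsilon$. I expect Step 2, the transfer from the $n$-dependent norm to the $k$-weighted coefficient sum including the $k=0$ term, to need the most care.
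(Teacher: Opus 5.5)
Your Step 1 is correct. Your worry in Step 2 is also well founded. For $k<j<n$,
\[
\langle\tilde\Psi_j,\tilde\Psi_k\rangle_{\frac{\lambda}{n^2}}=\lambda\,(n^{-2}-j^{-2})\int_{\mathbb{T}}\tilde\Psi_j'\overline{\tilde\Psi_k'}\,d\mu_2 ,
\]
and this need not vanish. Already for $j=2$, $k=1$ and $d\mu_1=dm$ it is a nonzero multiple of $\int z\,d\mu_2$. The paper's proof does not confront this. It writes the Pythagorean expansion \eqref{sa thm3.1 1} over the $\tilde\Psi_k$, which gives at once $\hat{K}\bigl[|\beta_{n,0,M}|^2+\sum_{k\ge1}|\beta_{n,k,M}|^2k^2/n^2\bigr]=O(1)$, and then divides by $n$. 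So your Step 2 must do the work that \eqref{sa thm3.1 1} does in the paper, and as written it is only announced.

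For $k\ge1$ it can be closed along your lines. The quantity $k^2\|q\|^2_{\frac{\lambda}{k^2}}=k^2\|q\|^2_{L^2(d\mu_1)}+\lambda\|q'\|^2_{L^2(d\mu_2)}$ is nondecreasing in $k$. Also, $\sum_{j<k}\beta_{n,j,M}\tilde\Psi_j$ is the $\langle\cdot,\cdot\rangle_{\frac{\lambda}{k^2}}$-orthogonal projection of $\sum_{j\le k}\beta_{n,j,M}\tilde\Psi_j$ onto degree $\le k-1$. Telescoping from $k=n-1$ down to $k=1$ then gives, for your remainder $R_n=\Pi_{n,M}-\tilde\Psi_n$,
\[
|\beta_{n,0,M}|^2+\sum_{k=1}^{n-1}k^2|\beta_{n,k,M}|^2\,\|\tilde\Psi_k\|^2_{\frac{\lambda}{k^2}}\le n^2\|R_n\|^2_{\frac{\lambda}{n^2}} .
\]
Since $\|\tilde\Psi_k\|^2_{\frac{\lambda}{k^2}}\ge\lambda m(\mu_2)$, this yields your claimed $\sum_{k\ge1}k^2|\beta_{n,k,M}|^2=O(n^2)$.

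The genuine gap is the $k=0$ term, which carries weight $n^2$ in \eqref{sa thm3.1 3.14}. To conclude you need $|\beta_{n,0,M}|^2\le\epsilon n$. The display above gives only $|\beta_{n,0,M}|=O(n)$. So does your own Cauchy--Schwarz estimate, since $(\sum_k k^2|\beta_{n,k,M}|^2)^{1/2}=O(n)$ and $|\tilde\Psi_k(0)|=O(1)$. Hence $n^2|\beta_{n,0,M}|^2/n^3=|\beta_{n,0,M}|^2/n$ is $O(n)$, not the $O(1)$ you assert in Step 3.

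The appeal to $M_0$ cannot repair this, for three reasons:
\begin{itemize}
\item $\Pi_{n,M}$ vanishes to order $n-M$ at the origin for every $M$, so nothing improves as $M$ grows.
\item In your bound the constant term enters only through $|R_n(0)|\le\|R_n\|_{L^2(dm)}=O(1)$, which was never the problem. The problem is $\sum_{k\ge1}\beta_{n,k,M}\tilde\Psi_k(0)$.
\item Part (3) of the proposition on $\Pi_{n,M}$ describes $\Pi_{n,M}/z^n$ on compact subsets of $|z|>1$. It carries no information about the coefficient of $\tilde\Psi_0$.
\end{itemize}
Closing the argument needs an estimate of a different kind. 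One example is a uniform lower Riesz bound $\|\sum_{k<n}\beta_k\tilde\Psi_k\|^2_{L^2(d\mu_1)}\ge c\sum_{k<n}|\beta_k|^2$ for this varying family, which would give $|\beta_{n,0,M}|=O(1)$. Your proposal does not establish such a bound, and the paper obtains the $k=0$ control only through \eqref{sa thm3.1 1}.
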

\begin{proof}
Using \eqref{sa 3.13}, and by computing the Sobolev norm, we get
\begin{align}\label{sa thm3.1 1}
				\notag\|\Pi_{n,M}(z)\|^2_{\frac{\lambda}{n^2}}-\|\tilde{\Psi}_n(z)\|^2_{\frac{\lambda}{n^2}}&=\left|\beta_{n,0,M}\right|^2 \|\tilde{\Psi}_{0}(z)\|^2_{\frac{\lambda}{n^2}}+\left|\beta_{n,1,M}\right|^2 \|\tilde{\Psi}_{1}(z)\|^2_{\frac{\lambda}{n^2}}+\ldots\\&+\left|\beta_{n,n-1,M}\right|^2 \|\tilde{\Psi}_{n-1}(z)\|^2_{\frac{\lambda}{n^2}}.
\end{align}
Now by the minimal norm property of the orthogonal polynomials we have
\begin{align*}
				\dfrac{\lambda}{k^2}\times k^2  m(\mu_2)	\leq\dfrac{\lambda}{k^2}\times k^2 \|\Phi_{k-1}(z;d\mu_2)\|^2_{d\mu_2} \leq \dfrac{\lambda}{k^2}\|\tilde{\Psi}^{\prime}_k(z)\|_{d\mu_{2}}^2 .
\end{align*}
This implies
\begin{align}\label{sa thm3.1  2}
			\left|\beta_{n,k,M}\right|^2\dfrac{\lambda k^2}{n^2} m(\mu_2) \leq 	\left|\beta_{n,k,M}\right|^2 \|\tilde{\Psi}_k(z)\|^2_{\frac{\lambda}{n^2}} \quad \text{for} \quad k=1,2,\ldots,n-1.													
\end{align}
We denote by $C_n(\mu_k)$,  the moment of the measure $\mu_k$; that is, $C_n(\mu_k)= \langle z^n,1 \rangle_{\mu_k}$.	Now using \eqref{sa thm3.1  2} in \eqref{sa thm3.1 1}, we have
\begin{equation*}
\begin{aligned}
				\left|\beta_{n,0,M}\right|^2 C_0(\mu_1)
				&+\left|\beta_{n,1,M}\right|^2\dfrac{\lambda 1^2}{n^2}m(\mu_2)
				+\cdots
				+\left|\beta_{n,n-1,M}\right|^2
				\dfrac{\lambda (n-1)^2}{n^2}m(\mu_2) \\
				&\leq
				\|\Pi_{n,M}(z)\|^2_{\frac{\lambda}{n^2}}
				-\|\tilde{\Psi}_n(z)\|^2_{\frac{\lambda}{n^2}}.
\end{aligned}
\end{equation*}
This means
\newline		
		$
		\displaystyle
		\|\Pi_{n,M}(z)\|^2_{\frac{\lambda}{n^2}}-\|\tilde{\Psi}_n(z)\|^2_{\frac{\lambda}{n^2}}
		$
\begin{align}\label{sa thm3.1 3}
			\geq \hat{K}\left[	\left|\beta_{n,0,M}\right|^2 +\left|\beta_{n,1,M}\right|^2\dfrac{1^2}{n^2} +\ldots+	\left|\beta_{n,n-1,M}\right|^2\dfrac{(n-1)^2}{n^2} \right],
\end{align}
where $\hat{K}= \min\{C_0(\mu_1), \lambda m(\mu_2)\}$.
		Since $\lambda m(\mu_2) \leq \|\tilde{\Psi}_n(z)\|^2_{\frac{\lambda}{n^2}}$,  \eqref{sa thm3.1 3} implies
\begin{align}\label{sa thm3.1 4}
			\hat{K}\left[
			\left|\beta_{n,0,M}\right|^2
			+\left|\beta_{n,1,M}\right|^2\dfrac{1^2}{n^2}
			+\ldots+
			\left|\beta_{n,n-1,M}\right|^2\dfrac{(n-1)^2}{n^2}
			\notag	\right]
			\\ \qquad \leq
			\|\Pi_{n,M}(z)\|^2_{d\mu_1}
			+\lambda\|\Phi_M(z;d\mu_2)\|^2_{d\mu_2}
			-\lambda m(\mu_2).
\end{align}
Taking into account that  $\lim_{M\rightarrow \infty} \|\Phi_M(z;d\mu_2)\|^2_{d\mu_{2}}=m(\mu_{2})$, we can see that, for,  given $\epsilon >0$ and $\epsilon_{1}=\dfrac{\hat{K}\epsilon}{2+\lambda}$, there exists a positive integer $M_0$ such that for all $M\geq M_0$,
\begin{align}\label{sa thm3.1 5}
			\|\Phi_M(z;d\mu_2)\|^2_{d\mu_{2}} < m(\mu_2)+\epsilon_{1}.
\end{align}
Moreover, by using Proposition \ref{Results for the defined polynomial above  }, we have, for  each fixed $M\geq M_0$, there exists a positive integer $n_M$ such that  for $n\geq n_M$,
\begin{align}\label{sa thm3.1 6}
			\| \Pi_{n,M}(z)\|^2_{d\mu_1}<\| \Phi_M(z;d\mu_2)\|^2_{d\mu_1}+\epsilon_{1}.
\end{align}
Using \eqref{sa thm3.1 5} and \eqref{sa thm3.1 6} in \eqref{sa thm3.1 4} gives
\begin{align}\label{sa thm3.1 7}
			\hat{K}\left[	\left|\beta_{n,0,M}\right|^2 +\left|\beta_{n,1,M}\right|^2\dfrac{1^2}{n^2} +\ldots+	\left|\beta_{n,n-1,M}\right|^2\dfrac{(n-1)^2}{n^2} \right]\leq
			\|\Phi_M(z;d\mu_2)\|^2_{d\mu_1}+\lambda \epsilon_{1}+\epsilon_{1}.
\end{align}
Therefore, for $M \geq M_0$ and $n \geq n_M$, we can write
\begin{align}\label{sa thm3.1 8}
			\| \Phi_n(z;d\mu_2)\|^2_{d\mu_1}+\epsilon_{1}+\lambda \epsilon_{1}\leq \epsilon_{1}n(2+\lambda).
\end{align}
		Clearly	\eqref{sa thm3.1 7} and \eqref{sa thm3.1 8} implies
\begin{align*}
			\dfrac{\hat{K}}{n^3}\left[	\left|\beta_{n,0,M}\right|^2 n^2+\left|\beta_{n,1,M}\right|^2 1^2+\ldots+	\left|\beta_{n,n-1,M}\right|^2(n-1)^2 \right]\leq \epsilon_{1}(2+\lambda).
\end{align*}
		which by substituting the value of $\hat{K}$ gives  \eqref{sa thm3.1 3.14}.
\end{proof}
\begin{corollary}\label{Bounds for the partial sum of coefficients, s=2}
		Given  $\epsilon >0$, there exists a natural number $M_0$ such that for every $M \geq M_0$, there exists a natural number $N_M$ such that for all $n \geq N_M$ and for all $R>1$, the following inequalities holds:
\begin{align*}
			\sum_{k=0}^{\floor{\frac{n}{2}}-1}\dfrac{\left|\beta_{n,k,M}\right|}{R^{n-k}} &< \sqrt{\epsilon}n^{3/2} \dfrac{1}{R^n}\left( \dfrac{1}{n^2}+\dfrac{R^2}{R^2-1}\right)^{1/2}, \\
			\sum_{k=\floor{\frac{n}{2}}}^{n-1} \dfrac{\left|\beta_{n,k,M}\right|}{R^{n-k}}&< \dfrac{\sqrt{\epsilon}n^{3/2}}{\floor{\frac{n}{2}}R^n} \dfrac{1}{\sqrt{R^2-1}}.
\end{align*}
\end{corollary}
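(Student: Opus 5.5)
Both inequalities will come from Theorem \ref{coefficients of infinite cut of Psi-bar, s=2} combined with the Cauchy--Schwarz inequality, splitting the range of $k$ at $\floor{\frac{n}{2}}$. Fix $\epsilon>0$ and take $M_0$ and, for $M\geq M_0$, $N_M:=n_M$ from that theorem. Then for $n\geq N_M$ we have
\begin{align*}
|\beta_{n,0,M}|^2 n^2+\sum_{k=1}^{n-1}|\beta_{n,k,M}|^2k^2\leq \epsilon n^3 .
\end{align*}
In particular $|\beta_{n,0,M}|\leq \sqrt{\epsilon}\,n^{1/2}$, and $\sum_{k\geq 1}|\beta_{n,k,M}|^2k^2\leq \epsilon n^3$.

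For the first sum, isolate the $k=0$ term and write the rest as
\begin{align*}
\sum_{k=1}^{\floor{\frac{n}{2}}-1}\dfrac{|\beta_{n,k,M}|}{R^{n-k}}=\dfrac{1}{R^n}\sum_{k\geq 1}|\beta_{n,k,M}|\,k\cdot \dfrac{R^k}{k}.
\end{align*}
It is cleaner to absorb everything into one Cauchy--Schwarz with weights $w_0=n$ and $w_k=k$ for $k\geq1$:
\begin{align*}
\sum_{k=0}^{\floor{\frac{n}{2}}-1}\dfrac{|\beta_{n,k,M}|}{R^{n-k}}\leq \dfrac{1}{R^n}\Big(\sum_k |\beta_{n,k,M}|^2w_k^2\Big)^{1/2}\Big(\dfrac{1}{n^2}+\sum_{k=1}^{\floor{\frac{n}{2}}-1}\dfrac{R^{2k}}{k^2}\Big)^{1/2}.
\end{align*}
The first factor is at most $\sqrt{\epsilon}\,n^{3/2}$. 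The remaining task is to bound $\sum_{k\geq1}R^{2k}/k^2$ by something of the form $R^2/(R^2-1)$, and this is the delicate point, since $R^{2k}$ grows. As written, the stated bound appears to intend the decaying factor $R^{-(n-k)}$ to be paired correctly.

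I would therefore first rewrite $R^{-(n-k)}=R^{-n}R^{k}$ and recheck which geometric series actually appears. If the intended estimate is with $R^{-(n-k)}\leq R^{-2(n-k)/2}$ summed over $j=n-k\geq n/2$, then $\sum_j R^{-2j}\leq R^2/(R^2-1)$. I expect the intended argument to use $\sum_{j\geq1}R^{-2j}=1/(R^2-1)\leq R^2/(R^2-1)$ after bounding $1/k^2\leq 1$. So I would pair $|\beta_{n,k,M}|k$ with $R^{-(n-k)}/k$, use $1/k\leq1$, and reindex by $j=n-k$ to obtain a geometric sum of $R^{-2j}$. The resulting factor is $R^{-n}$ times a bounded quantity only if one extracts $R^{-n}$ via $n-k\geq n/2$, and this is the step I would scrutinize most carefully. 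The stated form then follows with the $1/n^2$ term coming from $k=0$.

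For the second sum, with $k\geq\floor{\frac{n}{2}}$ we have $1/k\leq 1/\floor{\frac{n}{2}}$, so
\begin{align*}
\sum_{k=\floor{\frac{n}{2}}}^{n-1}\dfrac{|\beta_{n,k,M}|}{R^{n-k}}\leq\dfrac{1}{\floor{\frac{n}{2}}}\Big(\sum |\beta_{n,k,M}|^2k^2\Big)^{1/2}\Big(\sum_{j=1}^{\infty}R^{-2j}\Big)^{1/2}\leq \dfrac{\sqrt{\epsilon}\,n^{3/2}}{\floor{\frac{n}{2}}}\dfrac{1}{\sqrt{R^2-1}}.
\end{align*}
Matching the extra $R^{-n}$ in the statement requires the same bookkeeping of powers of $R$ as in the first sum, and I would check it there as well. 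The main obstacle throughout is this accounting of the powers of $R$; the Cauchy--Schwarz steps themselves are routine.
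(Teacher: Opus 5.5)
You use the same Cauchy--Schwarz set-up as the paper: weight $n$ on $k=0$, weight $k$ on $k\geq 1$, split at $\floor{\frac{n}{2}}$, input from Theorem \ref{coefficients of infinite cut of Psi-bar, s=2}. But the proposal does not prove the statement. You never produce the factor $R^{-n}$ in either inequality, and ``the stated form then follows'' is unsupported: from $n-k\geq n/2$ you only get $R^{-(n-k)}\leq R^{-n/2}$. What your computations actually give is
\begin{align*}
\sum_{k=0}^{\floor{\frac{n}{2}}-1}\frac{|\beta_{n,k,M}|}{R^{n-k}}\leq \sqrt{\epsilon}\,n^{3/2}R^{-n/2}\Big(\frac{1}{n^2}+\frac{1}{R^2-1}\Big)^{1/2},\qquad \sum_{k=\floor{\frac{n}{2}}}^{n-1}\frac{|\beta_{n,k,M}|}{R^{n-k}}\leq \frac{\sqrt{\epsilon}\,n^{3/2}}{\floor{\frac{n}{2}}}\,\frac{1}{\sqrt{R^2-1}}.
\end{align*}
These use $\sum_{k=1}^{\floor{n/2}-1}R^{-2(n-k)}\leq R^{-2(n-\floor{n/2})}/(R^2-1)\leq R^{-n}/(R^2-1)$ and $\sum_{j\geq 1}R^{-2j}=1/(R^2-1)$.

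This gap cannot be closed, because the inequalities as stated fail in general. The $\beta_{n,k,M}$ do not depend on $R$. Multiply the second inequality by $R^n$: the left side becomes $\sum_{k\geq\floor{n/2}}|\beta_{n,k,M}|R^{k}$, each term at least $|\beta_{n,k,M}|$, while the right side $\sqrt{\epsilon}n^{3/2}/(\floor{\frac{n}{2}}\sqrt{R^2-1})\to 0$ as $R\to\infty$. So it forces $\beta_{n,k,M}=0$ for $\floor{\frac{n}{2}}\leq k\leq n-1$. Similarly, the first inequality times $R^n$ has a bounded right side, which forces $\beta_{n,k,M}=0$ for $1\leq k<\floor{\frac{n}{2}}$. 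Theorem \ref{coefficients of infinite cut of Psi-bar, s=2} gives only a weighted $\ell^2$ bound, which is compatible with, e.g., $|\beta_{n,n-1,M}|=\frac12\sqrt{\epsilon n}$.

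The paper's proof gets $R^{-n}$ exactly at the step you flagged. It bounds $\sum_{k=\floor{n/2}}^{n-1}R^{-2(n-k)}$ by $R^{-2n}\sum_{k\geq 0}R^{2k}$ and evaluates this divergent series as $1/(R^2-1)$. In the first estimate it likewise sets $\sum_{k\geq1}R^{2k}=R^2/(R^2-1)$. Both evaluations are valid only for $R<1$. So your suspicion was right, but the correct conclusion is that the corollary must be replaced by the weaker bounds above, not that it ``follows''. Note also that the corrected second bound is of order $\sqrt{\epsilon n}/\sqrt{R^2-1}$ rather than $o(1)$. Hence the later use in Theorem \ref{Strong Asymptotic for Psi-bar,s=2}, which relies on $n^{3/2}R^{-n}\to 0$, is not justified by this route either.
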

\begin{proof}
Using Cachy-Schwarz inequality and Theorem \ref{coefficients of infinite cut of Psi-bar, s=2}, we get
\begin{align*}
			\sum_{k=\floor{\frac{n}{2}}}^{n-1} \dfrac{\left|\beta_{n,k,M}\right|}{R^{n-k}}&=\sum_{k=\floor{\frac{n}{2}}}^{n-1} \dfrac{k\left|\beta_{n,k,M}\right|}{kR^{n-k}} \leq \left(\sum_{k=\floor{\frac{n}{2}}}^{n-1}\left|\beta_{n,k,M}\right|^2k^2\right)^{1/2} \left(\sum_{k=\floor{\frac{n}{2}}}^{n-1} \dfrac{1}{k^2R^{2(n-k)}}\right)^{1/2}\\&\leq \left(\left|\beta_{n,0,M}\right|^2 n^2+\sum_{k=1}^{n-1}\left|\beta_{n,k,M}\right|^2 k^2\right)^{1/2}\left(\dfrac{1}{\left(\floor{\frac{n}{2}}\right)^2}\sum_{k=\floor{\frac{n}{2}}}^{n-1}\dfrac{1}{R^{2(n-k)}}\right)^{1/2}\\&< \dfrac{\sqrt{\epsilon}n^{3/2}}{\floor{\frac{n}{2}}R^n}\left(\sum_{k=0}^{\infty} R^{2k}\right)^{1/2}=\dfrac{\sqrt{\epsilon}n^{3/2}}{\floor{\frac{n}{2}}R^n} \dfrac{1}{\sqrt{R^2-1}}.
\end{align*}
Similarly, using Cauchy-Schwarz inequality and Theorem \ref{coefficients of infinite cut of Psi-bar, s=2} gives
\begin{align*}
			\sum_{k=0}^{\floor{\frac{n}{2}}-1}\dfrac{\left|\beta_{n,k,M}\right|}{R^{n-k}}&= \dfrac{n\left|\beta_{n,0,M}\right|}{n R^n}+ \sum_{k=1}^{\floor{\frac{n}{2}}-1}\dfrac{k\left|\beta_{n,k,M}\right|}{k R^{n-k}}\\&\leq \left(n^2\left|\beta_{n,0,M}\right|^2+\sum_{k=1}^{\floor{\frac{n}{2}}-1} k^2\left|\beta_{n,k,M}\right|^2\right)^{1/2}\left(\dfrac{1}{n^2 R^{2n}}+\sum_{k=1}^{\floor{\frac{n}{2}}-1} \dfrac{1}{R^{2(n-k)}k^2}\right)^{1/2}\\&<\left(n^2\left|\beta_{n,0,M}\right|^2+\sum_{k=1}^{n-1} k^2\left|\beta_{n,k,M}\right|^2\right)^{1/2}\left(\dfrac{1}{n^2 R^{2n}}+\dfrac{1}{R^{2n}}\sum_{k=1}^{\infty} R^{2k}\right)^{1/2}\\& <\sqrt{\epsilon}n^{3/2} \dfrac{1}{R^n}\left( \dfrac{1}{n^2}+\dfrac{R^2}{R^2-1}\right)^{1/2}.
\tag*{{\qedhere}}		
\end{align*}
\end{proof}
\begin{remark}\label{Modified bounds for Corollary 3.1}
		Given $R>1$, there exists a natural number $M_0$ such that for every $M \geq M_0$, there exists a natural number $N^{\prime}_M$ such that for all $n \geq N^{\prime}_M$ the following hold:
\begin{enumerate}
			\item [\rm (1)]  $\sum_{k=\floor{\frac{n}{2}}}^{n-1} \dfrac{\left|\beta_{n,k,M}\right|}{R^{n-k}}< \dfrac{1}{3}$,
			\item [\rm (2)] $\sum_{k=0}^{\floor{\frac{n}{2}}-1}\dfrac{\left|\beta_{n,k,M}\right|}{R^{n-k}}<\dfrac{1}{3}$.
\end{enumerate}
\end{remark}
The preceding discussion provides the foundation for the following proposition, which plays a crucial role in establishing the main theorem of the paper.
\begin{proposition}\label{Uniform bound for Psi-bar, s=2}
	The sequence $\left\{\frac{\tilde{\Psi}_n(z)}{z^n}\right\}_{n\geq 1}$ is uniformly bounded on compact subsets of $|z|>1$.
\end{proposition}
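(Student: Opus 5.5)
The plan is to use the expansion \eqref{sa 3.13} together with the coefficient bounds of Remark \ref{Modified bounds for Corollary 3.1}, and to run a bootstrapping (strong induction) argument on the quantity $\sup_{|z|=R}|\tilde{\Psi}_k(z)|/R^k$. By the maximum principle applied to $\tilde{\Psi}_n(z)/z^n$, which is analytic in $|z|>1$ including at $\infty$ (where it equals $1$), it suffices to bound this quantity on circles $|z|=R$ for each fixed $R>1$. A compact subset $K$ of $|z|>1$ lies in $|z|\ge R$ for some $R>1$, so bounds on $|z|=R$ transfer to $K$.

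Fix $R>1$. Choose $M\ge M_0$ as in Remark \ref{Modified bounds for Corollary 3.1}, and then $N=N'_M$, so that for all $n\ge N$ both partial sums $\sum_{k<\lfloor n/2\rfloor}|\beta_{n,k,M}|R^{-(n-k)}$ and $\sum_{k\ge\lfloor n/2\rfloor}|\beta_{n,k,M}|R^{-(n-k)}$ are below $1/3$. Next, bound $\Pi_{n,M}$ on $|z|=R$. Explicitly,
\[
\Pi_{n,M}(z)=\sum_{j=0}^{M} a_{j,M}\frac{n}{n-M+j}z^{n-M+j},
\]
so $|\Pi_{n,M}(z)|/R^n\le \sum_j |a_{j,M}|\,n/(n-M)\le A_M$ uniformly in $n\ge N$. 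Alternatively, one can use Proposition \ref{Results for the defined polynomial above  }(3) for a fixed $M$; the direct bound is cleaner.

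Set $B_n=\max_{0\le k\le n}\sup_{|z|=R}|\tilde{\Psi}_k(z)|/R^k$. For $n\ge N$, dividing \eqref{sa 3.13} by $R^n$ gives
\[
\frac{|\tilde{\Psi}_n(z)|}{R^n}\le A_M+\sum_{k=0}^{n-1}\frac{|\beta_{n,k,M}|}{R^{n-k}}\cdot\frac{|\tilde{\Psi}_k(z)|}{R^k}\le A_M+\tfrac23 B_{n-1}.
\]
Hence $B_n\le \max(B_{n-1},A_M+\tfrac23B_{n-1})$. Induction then yields $B_n\le\max(B_{N-1},3A_M)$ for all $n$, since if $B_{n-1}\le T:=\max(B_{N-1},3A_M)$ then $A_M+\tfrac23 T\le T$. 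The initial finitely many terms $B_{N-1}$ are trivially finite because they involve finitely many polynomials on a compact circle. This gives a uniform bound on $|z|=R$. By the maximum principle on $|z|\ge R$ (including the point at infinity), the same bound holds on $|z|\ge R\supseteq K$.

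The main obstacle is ensuring that the constants of Remark \ref{Modified bounds for Corollary 3.1} are genuinely uniform. The sums must be controlled by $1/3$ for all $n\ge N'_M$ with the same fixed $M$ and $R$, and this requires the Corollary \ref{Bounds for the partial sum of coefficients, s=2} estimates. Those estimates are of order $\sqrt{\epsilon}\,n^{3/2}R^{-n}$ and $\sqrt{\epsilon}\,n^{1/2}$ respectively, and for the second sum this is delicate. If the second bound does not actually tend below $1/3$, I would instead choose $\epsilon$ depending on $R$ small enough that the prefactor $\sqrt{\epsilon}\,n^{3/2}/(\lfloor n/2\rfloor\sqrt{R^2-1})\approx 2\sqrt{\epsilon\, n}/\sqrt{R^2-1}$ is handled. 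Failing that, I would fall back on a weaker but sufficient contraction: any total bound $\theta<1$ on the weighted coefficient sum suffices for the induction, giving $B_n\le\max(B_{N-1},A_M/(1-\theta))$.
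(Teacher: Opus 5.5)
There is a genuine gap, at exactly the step you flag. Your bootstrap is the natural reading of the paper's one-line proof, since it is what Remark~\ref{Modified bounds for Corollary 3.1} is set up for. It is sound \emph{given} a uniform contraction $\sum_{k=0}^{n-1}|\beta_{n,k,M}|R^{-(n-k)}\le\theta<1$ for all $n\ge N$, and your bound $A_M$ on $\Pi_{n,M}$ and the maximum-principle reduction are fine. But that contraction is not available.

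Granting Theorem~\ref{coefficients of infinite cut of Psi-bar, s=2}, the head sum is harmless, of order $\sqrt{\epsilon}\,n^{3/2}R^{-n/2}$. For the tail, however, $\sum_{k=\lfloor n/2\rfloor}^{n-1}R^{-2(n-k)}=\sum_{j=1}^{n-\lfloor n/2\rfloor}R^{-2j}$ is bounded by $(R^2-1)^{-1}$ but does not decay in $n$. The factor $R^{-n}$ in Corollary~\ref{Bounds for the partial sum of coefficients, s=2} comes from majorizing by the divergent series $\sum_{k\ge0}R^{2k}$. So Cauchy--Schwarz gives only about $2\sqrt{\epsilon n/(R^2-1)}$, which is unbounded, and part (1) of the Remark is unsupported.

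Your fallbacks do not repair this. $\epsilon$, and with it $M$, is fixed before $n\to\infty$, so no choice of $\epsilon$ beats the factor $\sqrt n$. Observing that any $\theta<1$ would suffice restates the goal rather than proving it. Nor can the $\beta$-estimates be sharpened enough. The $\epsilon$ in Theorem~\ref{coefficients of infinite cut of Psi-bar, s=2} is bought with a factor $n$: \eqref{sa thm3.1 7} bounds $\hat K\bigl(|\beta_{n,0,M}|^2+\sum_{k\ge1}|\beta_{n,k,M}|^2k^2/n^2\bigr)$ only by the $O(1)$ quantity $\|\Phi_M(z;d\mu_2)\|^2_{L^2(d\mu_1)}+(1+\lambda)\epsilon_1$. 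That quantity is not small, because $\Pi_{n,M}$ is nearly extremal for the derivative part of the norm but not for the $L^2(d\mu_1)$ part. The honest tail bound is then of order $(R^2-1)^{-1/2}$ with a non-small constant, so there is no contraction for $R$ near $1$.

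Finally, the identity \eqref{sa thm3.1 1} underlying all $\beta$-estimates needs $\tilde\Psi_0,\dots,\tilde\Psi_{n-1}$ to be mutually orthogonal for $\langle\cdot,\cdot\rangle_{\frac{\lambda}{n^2}}$. But $\tilde\Psi_k$ is extremal for $\langle\cdot,\cdot\rangle_{\frac{\lambda}{k^2}}$, and for $j<k<n$ one gets $\langle\tilde\Psi_k,\tilde\Psi_j\rangle_{\frac{\lambda}{n^2}}=\lambda(n^{-2}-k^{-2})\int_{\mathbb{T}}\tilde\Psi_k'\,\overline{\tilde\Psi_j'}\,d\mu_2$, which is in general nonzero. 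So these estimates do not even apply to the diagonal sequence over which your $B_n$ runs.

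The proposition needs none of this machinery. Since $\tilde\Psi_n$ minimizes the $n$-th norm among monic polynomials of degree $n$, comparison with $z^n$ and the normalization $\mu_1(\mathbb{T})=\mu_2(\mathbb{T})=1$ give $\|\tilde\Psi_n\|^2_{L^2(d\mu_1)}\le\sigma_n^2\le\|z^n\|^2_{\frac{\lambda}{n^2}}=1+\lambda$. By \eqref{boundedness of the Radon-Nikodym derivatives}, $\mu_1'\ge c_1$. Writing $\tilde\Psi_n(z)=\sum_{k=0}^{n}a_{n,k}z^k$, Parseval gives $\sum_k|a_{n,k}|^2\le(1+\lambda)/c_1$. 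Then, exactly as in the proof of Lemma~\ref{Uniform convergence of ratio of Psi and Phi, s>2}, for $|z|\ge R>1$ and every $n$,
\[
\left|\frac{\tilde\Psi_n(z)}{z^n}\right|\le\sum_{k=0}^{n}\frac{|a_{n,k}|}{R^{n-k}}\le\left(\frac{1+\lambda}{c_1}\right)^{1/2}\frac{R}{\sqrt{R^2-1}}.
\]
Every compact subset of $|z|>1$ lies in some $\{|z|\ge R\}$ with $R>1$. This proves the proposition directly, without \eqref{sa 3.13}, the coefficients $\beta_{n,k,M}$, or any induction.
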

\begin{proof}
	The result is obtained by an analogous application of the method used in Theorem 1 of \cite{constructive approx2003- berriochoa cachafeiro- Strong asymptotics for sobolev op}.
\end{proof}
Now, we are in position to prove our main theorem of this section.
\begin{theorem}\label{Strong Asymptotic for Psi-bar,s=2} 
	Let $\left\{\tilde{\Psi}_n(z)\right\}$ be the sequence of monic orthogonal polynomials with respect to the Sobolev inner product \eqref{sa ip s=2 3.1} on the unit circle. Then
\begin{align*}
		\lim_{n \to \infty} \dfrac{\tilde{\Psi}_n(z)}{z^n}=\dfrac{\overline{D(	\mu^{\prime}_2,\bar{z}^{-1};d\mu_2)^{-1}}}{\overline{D(\mu^{\prime}_2,0;d\mu_2)^{-1}}}\quad {\mbox{uniformly on compact subsets of}} \,\,  |z|>1.
	\end{align*}
\end{theorem}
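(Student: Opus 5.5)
We follow the scheme of Theorem 1 in \cite{constructive approx2003- berriochoa cachafeiro- Strong asymptotics for sobolev op}, using the expansion \eqref{sa 3.13}. Write $G_2(z)$ for the right-hand side of the statement. Fix a compact $K\subset\{|z|>1\}$ and set $R:=\min_{z\in K}|z|>1$. From \eqref{sa 3.13},
\begin{align*}
\frac{\tilde{\Psi}_n(z)}{z^n}-G_2(z)=\left(\frac{\Pi_{n,M}(z)}{z^n}-G_2(z)\right)-\sum_{k=0}^{n-1}\beta_{n,k,M}\,\frac{\tilde{\Psi}_k(z)}{z^k}\,\frac{1}{z^{n-k}} .
\end{align*}
The plan is to make both terms on the right small, uniformly on $K$, by first choosing $M$ large and then $n$ large.

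\textbf{First term.} By item (3) of Proposition \ref{Results for the defined polynomial above  }, for every $\epsilon>0$ there is $M_1$ such that for each $M\ge M_1$ we have $\sup_K|\Pi_{n,M}(z)/z^n-G_2(z)|<\epsilon$ once $n$ is large enough (depending on $M$). Strictly, item (3) is an iterated limit. The inner limit $\lim_n \Pi_{n,M}(z)/z^n$ can be computed explicitly from the definition: $\Pi_{n,M}(z)=n\sum_k a_{k,M} z^{n-M+k}/(n-M+k)$, so it equals $\Phi_M(z;d\mu_2)/z^M$. Hence the statement reduces to the classical Szeg\H{o} asymptotics for $\Phi_M(z;d\mu_2)/z^M$, which is uniform on $K$.

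\textbf{Second term.} By Proposition \ref{Uniform bound for Psi-bar, s=2}, there is $B_K$ with $|\tilde\Psi_k(z)/z^k|\le B_K$ for all $k\ge 1$ and $z\in K$; the single term $k=0$ has $\tilde\Psi_0=1$. The second term is therefore bounded by $B_K'\sum_{k=0}^{n-1}|\beta_{n,k,M}|R^{-(n-k)}$. We then apply Corollary \ref{Bounds for the partial sum of coefficients, s=2}, splitting at $\lfloor n/2\rfloor$. The upper half is at most $\sqrt{\epsilon}\,n^{3/2}/(\lfloor n/2\rfloor R^n\sqrt{R^2-1})$, and the lower half at most $\sqrt{\epsilon}\,n^{3/2}R^{-n}(n^{-2}+R^2/(R^2-1))^{1/2}$. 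Both tend to $0$ as $n\to\infty$, because $n^{3/2}R^{-n}\to0$. So for $M\ge M_0(\epsilon)$ and $n\ge N_M$, the sum is at most $C(R)\sqrt{\epsilon}$.

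Taking $M\ge\max(M_0,M_1)$ and then $n$ large gives $\sup_K|\tilde\Psi_n(z)/z^n-G_2(z)|\le \epsilon+C\sqrt\epsilon$, which proves the theorem. Note that the limit does not depend on $M$, so this order of quantifiers is legitimate.

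\textbf{Main obstacle.} The delicate input is Proposition \ref{Uniform bound for Psi-bar, s=2}, the uniform boundedness of $\tilde\Psi_k/z^k$, which is needed to control the sum. If it has to be made explicit, I would argue by a bootstrap on $\max_{k\le n}\sup_{|z|=R}|\tilde\Psi_k(z)/z^k|$, using Remark \ref{Modified bounds for Corollary 3.1}. That remark gives total coefficient weight less than $2/3<1$, so the recursion yields $m_n\le A+\tfrac23 m_n$, hence $m_n\le 3A$. This bounds the quantity on $|z|=R$, and the maximum principle for $\tilde\Psi_k(z)/z^k$ (which is analytic at $\infty$) extends the bound to $|z|\ge R$. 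One must check that $N'_M$ can be chosen independently of the circle radius used, by fixing $R$ from $K$ first.
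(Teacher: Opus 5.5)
\textbf{There is a genuine gap.} Your argument follows the paper's route (expansion \eqref{sa 3.13}, Proposition \ref{Uniform bound for Psi-bar, s=2}, Corollary \ref{Bounds for the partial sum of coefficients, s=2}). It breaks where you say both halves of the sum tend to $0$ because $n^{3/2}R^{-n}\to 0$.

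The upper-half bound you quote carries a spurious factor $R^{-n}$. Its derivation replaces $\sum_{k=\lfloor n/2\rfloor}^{n-1}R^{-2(n-k)}$ by $R^{-2n}\sum_{k\ge 0}R^{2k}$, and that series diverges. In fact $\sum_{k=\lfloor n/2\rfloor}^{n-1}R^{-2(n-k)}\le\sum_{j\ge1}R^{-2j}=(R^2-1)^{-1}$. So Cauchy--Schwarz with Theorem \ref{coefficients of infinite cut of Psi-bar, s=2} gives only $\sqrt{\epsilon}\,n^{3/2}/(\lfloor n/2\rfloor\sqrt{R^2-1})\approx 2\sqrt{\epsilon n/(R^2-1)}$, which grows with $n$. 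Nothing better follows from that theorem: a single coefficient $|\beta_{n,n-1,M}|\approx\sqrt{\epsilon n}$ satisfies it.

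Your own (correct) computation $\lim_n\Pi_{n,M}(z)/z^n=\Phi_M(z;d\mu_2)/z^M$ shows that the estimate proves too much. It would give $\lim_n\tilde\Psi_n(z)/z^n=\Phi_M(z;d\mu_2)/z^M$ for every fixed $M\ge M_0$. Comparing $M$ with $M+1$ via $\Phi_{M+1}(z)=z\Phi_M(z)+\Phi_{M+1}(0)\Phi_M^*(z)$ then forces $\Phi_{M+1}(0;d\mu_2)=0$ for all $M\ge M_0$. So $\mu_2$ would have to be a Bernstein--Szeg\H{o} measure, which fails for the $\mu_2$ of Section 4.

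Theorem \ref{coefficients of infinite cut of Psi-bar, s=2} is itself unsupported in the varying setting. Identity \eqref{sa thm3.1 1} treats the $\tilde\Psi_k$, $k<n$, as orthogonal for $\langle\cdot,\cdot\rangle_{\lambda/n^2}$, but $\tilde\Psi_k$ is orthogonal for $\langle\cdot,\cdot\rangle_{\lambda/k^2}$. Your bootstrap for the uniform bound goes through Remark \ref{Modified bounds for Corollary 3.1}, so it inherits the same defect.

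\textbf{The statement itself fails unless $\mu_1=\mu_2$, so this step cannot be repaired.} Write $\tilde\Psi_n(z)=z^n\overline{q_n(1/\bar z)}$ with $q_n$ a polynomial and $q_n(0)=1$. On $\mathbb{T}$ one has $|\tilde\Psi_n|=|q_n|$ and $|\tilde\Psi_n'|=n|T_nq_n|$, where $T_n$ multiplies the $k$-th Taylor coefficient by $1-k/n$. Hence $\sigma_n^2=\int_{\mathbb{T}}|q_n|^2d\mu_1+\lambda\int_{\mathbb{T}}|T_nq_n|^2d\mu_2$.

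This identity gives the uniform bound at once, since $\|q_n\|_{H^2}^2\le\sigma_n^2/c_1\le(1+\lambda)/c_1$ and $|\tilde\Psi_n(z)/z^n|\le\|q_n\|_{H^2}(1-|z|^{-2})^{-1/2}$. Testing with $z^n\overline{p(1/\bar z)}$ for fixed polynomials $p$ with $p(0)=1$ gives $\limsup_n\sigma_n^2\le G(\nu):=\exp\int\log\nu'\,dm$, where $\nu=\mu_1+\lambda\mu_2$.

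Take any subsequence along which the (bounded) coefficients of $q_n$ converge to those of some $q$. Then $q_n$ and $T_nq_n$ both converge weakly to $q$ in $H^2$. Weak lower semicontinuity yields $\|qD(\nu',\cdot;d\nu)\|_{H^2}^2=\int|q|^2d\nu\le G(\nu)=D(\nu',0;d\nu)^2$. The equality case of $\|f\|_{H^2}\ge|f(0)|$ then forces $q=D(\nu',0;d\nu)/D(\nu',\cdot;d\nu)$. This limit is independent of the subsequence, so the whole sequence converges, and with the $H^2$ bound, uniformly on compact subsets of $|z|>1$:
\[
\lim_{n\to\infty}\frac{\tilde\Psi_n(z)}{z^n}=\frac{\overline{D(\nu',\bar z^{-1};d\nu)^{-1}}}{\overline{D(\nu',0;d\nu)^{-1}}},\qquad \nu=\mu_1+\lambda\mu_2 .
\]
This equals the claimed $\mu_2$-expression only if $\nu'/\mu_2'$ is constant, i.e.\ only if $\mu_1=\mu_2$. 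It agrees with Theorem \ref{JAT: Properties of s=2 with same measure} when $\mu_1=\mu_2$. It also agrees with the $s>2$ result, where the effective weight $\lambda n^{2-s}\to0$, and with the persistent gap in the paper's $s=2$ numerics.

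The missing point is that in the balanced regime the $L^2(\mu_1)$ part of the norm is of the same order as the derivative part. Consequently $\|\Pi_{n,M}\|^2_{\lambda/n^2}-\sigma_n^2$ does not become small when $\mu_1\neq\mu_2$. Comparison with $\Pi_{n,M}$, which is built from $\mu_2$ alone, works only in the unbalanced Berriochoa--Cachafeiro setting, where the derivative term dominates by a factor $n^2$.
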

\begin{proof}
	In view of Proposition \ref{Results for the defined polynomial above  }, it is  enough to show that
\begin{align*}
		\lim_{M\to \infty}\lim_{n \to \infty} \left(\dfrac{\Pi_{n,M}(z)}{z^n}-\dfrac{\tilde{\Psi}_n(z)}{z^n}\right)=0, \quad \text{ uniformly on compact subsets of} \quad |z|>1.
\end{align*}
	For the compact subset $\Omega $ of $|z|>1$, let $r= \min_{z\in \Omega} |z|$, and take $R$ such that $1<R<r$. Now from \eqref{sa 3.13}, we can write
\begin{align}\label{sa thm3.2 1}
		\left| \dfrac{\Pi_{n,M}(z)}{z^n}-\dfrac{\tilde{\Psi}_n(z)}{z^n}\right|= \left| \sum_{k=0}^{n-1} \dfrac{\beta_{n,k,M}}{z^{n-k}}\dfrac{\tilde{\Psi}_k(z)}{z^k}\right| \leq \sum_{k=0}^{n-1}\dfrac{\left| \beta_{n,k,M}\right|}{R^{n-k}} \left|\dfrac{\tilde{\Psi}_k(z)}{z^k}\right|.
\end{align}
	By using  Proposition \ref{Uniform bound for Psi-bar, s=2}, we can write \eqref{sa thm3.2 1} as
\begin{align*}
		\left| \dfrac{\Pi_{n,M}(z)}{z^n}-\dfrac{\tilde{\Psi}_n(z)}{z^n}\right|\leq K_3  \sum_{k=0}^{n-1}\dfrac{\left| \beta_{n,k,M}\right|}{R^{n-k}} .
\end{align*}
	Now fix $\delta > 0$. By Corollary \ref{Bounds for the partial sum of coefficients, s=2}, there exists $M_0$ such that, for every $M \geq M_0$, there exists $N_M$ such that, for all $n \geq N_M$,
\begin{align}\label{sa thm3.2 2}
		\sum_{k=0}^{n-1}\dfrac{\left| \beta_{n,k,M}\right|}{R^{n-k}} <	\dfrac{\sqrt{\delta}n^{3/2}}{R^n}\left[\dfrac{1}{\sqrt{R^2-1}}\dfrac{1}{\floor{\frac{n}{2}}}+\left(\dfrac{1}{n^2}+\dfrac{R^2}{R^2-1}\right)^{1/2}\right].
\end{align}
	Since $R>1$ is fixed,  $\dfrac{n^{3/2}}{R^n} \to 0$, and the expression in the square bracket remains bounded as $n \to \infty$. Hence the right hand side of \eqref{sa thm3.2 2} tends to zero. Consequently, for every fixed $M \geq M_0$,  we have
\begin{align*}
		\lim_{n \to \infty}\sup_{z \in \Omega}	\left| \dfrac{\Pi_{n,M}(z)}{z^n}-\dfrac{\tilde{\Psi}_n(z)}{z^n}\right| =0.
\end{align*}
	Thus, for every $M \geq M_0$, the difference converges uniformly to zero on $\Omega$. Therefore,
\begin{align*}
		\lim_{M\to \infty}	\lim_{n \to \infty} \left( \dfrac{\Pi_{n,M}(z)}{z^n}-\dfrac{\tilde{\Psi}_n(z)}{z^n}\right)=0
\end{align*}	
uniformly on $\Omega$.
\end{proof}
Since $\mu_2  \in \mathcal{S}$,  using Theorem \ref{Strong Asymptotic for Psi-bar,s=2} provide the following result immediately.
\begin{corollary}\label{Asymptotic consequences of Psi bar(s=2) and Phi(mu 2)}
		$\lim_{n \to \infty}\dfrac{\tilde{\Psi}_n(z)}{\Phi_n(z;d\mu_2)}=1$
		uniformly on the compact subsets of $|z|>1$.
\end{corollary}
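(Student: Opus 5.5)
The plan is to write the ratio as a quotient of two normalized quantities,
\begin{align*}
\dfrac{\tilde{\Psi}_n(z)}{\Phi_n(z;d\mu_2)}=\dfrac{\tilde{\Psi}_n(z)/z^n}{\Phi_n(z;d\mu_2)/z^n},
\end{align*}
and to treat the numerator and the denominator separately on a fixed compact set $K\subset\{|z|>1\}$.

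For the numerator, Theorem \ref{Strong Asymptotic for Psi-bar,s=2} gives $\tilde{\Psi}_n(z)/z^n\to G_2(z)$ uniformly on $K$, where
\begin{align*}
G_2(z):=\dfrac{\overline{D(\mu^{\prime}_2,\bar{z}^{-1};d\mu_2)^{-1}}}{\overline{D(\mu^{\prime}_2,0;d\mu_2)^{-1}}}.
\end{align*}
For the denominator, since $\mu_2\in\mathcal{S}$, the classical Szeg\"o asymptotics give $\Phi_n(z;d\mu_2)/z^n\to G_2(z)$ uniformly on $K$. This is exactly \eqref{sa lemma2 1} with $\mu_1$ replaced by $\mu_2$, obtained by dividing $\phi_n/z^n\to\overline{D(\mu_2',\bar z^{-1})^{-1}}$ by $\kappa_n(d\mu_2)$ and using \eqref{sa 1.1}.

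The only point needing care is that the denominator stays away from zero uniformly. The Szeg\"o function $D(\mu_2',\cdot)$ is analytic and zero-free in $\mathbb{D}$, so $G_2$ is continuous and nonvanishing on $K$. Hence $|G_2|\ge m_K>0$ on $K$. Arguing as in Lemma \ref{Bound for classical Phi w.r.t first measure} with $\mu_2$ in place of $\mu_1$, we get $|\Phi_n(z;d\mu_2)/z^n|\ge m_K/2$ on $K$ for all large $n$.

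We then estimate
\begin{align*}
\left|\dfrac{\tilde{\Psi}_n(z)}{\Phi_n(z;d\mu_2)}-1\right|=\dfrac{\left|\tilde{\Psi}_n(z)/z^n-\Phi_n(z;d\mu_2)/z^n\right|}{\left|\Phi_n(z;d\mu_2)/z^n\right|}\le \dfrac{2}{m_K}\,\sup_{z\in K}\left|\dfrac{\tilde{\Psi}_n(z)}{z^n}-\dfrac{\Phi_n(z;d\mu_2)}{z^n}\right|.
\end{align*}
By the triangle inequality, the supremum is at most the sum of the distances of the numerator and of the denominator from $G_2$ on $K$. Both distances tend to zero by the two limits above, so the ratio tends to $1$ uniformly on $K$.

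The only substantive input is the lower bound on $|\Phi_n(z;d\mu_2)/z^n|$, and it follows directly from the $\mu_2$-analogue of Lemma \ref{Bound for classical Phi w.r.t first measure}. No genuine obstacle is expected.
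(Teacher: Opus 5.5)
Your argument is the paper's own: the corollary follows in one line from Theorem \ref{Strong Asymptotic for Psi-bar,s=2} together with the Szeg\"o asymptotics of $\Phi_n(z;d\mu_2)$ for $\mu_2\in\mathcal{S}$. Your lower bound $|\Phi_n(z;d\mu_2)/z^n|\ge m_K/2$ on $K$ (the $\mu_2$-analogue of Lemma \ref{Bound for classical Phi w.r.t first measure}) is exactly the detail the paper leaves implicit, so as a deduction from that theorem your proof is complete.

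There is, however, a genuine problem, located not in your step but in what it rests on. Your remark that the only substantive input is the denominator bound is misplaced. Everything is in Theorem \ref{Strong Asymptotic for Psi-bar,s=2}, and that theorem, hence this corollary, fails whenever $\mu_1\neq\mu_2$.

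To see why, write $P(z)=z^n\overline{f(1/\bar z)}$ with $f=\sum_{j=0}^n b_jz^j$ and $f(0)=1$. On $\mathbb{T}$ one has $|P|=|f|$ and $|P'|=|nf-zf'|$, so $\tilde\Psi_n$ corresponds to the minimizer of $\|f\|^2_{L^2(d\mu_1)}+\lambda\|\sum_j(1-j/n)b_jz^j\|^2_{L^2(d\mu_2)}$. This form dominates $c_1\|b\|^2_{\ell^2}$. Moreover, the reversed polynomials $\Phi_n^*(z;\nu)$ for $\nu'=\mu_1'+\lambda\mu_2'$ converge in $H^2$. A resolvent comparison therefore shows that the damping factors $1-j/n$ are asymptotically irrelevant. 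Consequently $\tilde\Psi_n(z)/z^n$ converges to the normalized Szeg\"o function of $\mu_1'+\lambda\mu_2'$. This agrees with that of $\mu_2'$ only if $\mu_1'\propto\mu_2'$, i.e.\ $\mu_1=\mu_2$, which is the case of Theorem \ref{JAT: Properties of s=2 with same measure}.

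A concrete counterexample: take $\mu_2=m$ (so $\Phi_n(z;dm)=z^n$), $\mu_1'=\tfrac45|z-\tfrac12|^2$ and $\lambda=1$. The Euler--Lagrange equations read $(1+(1-j/n)^2)b_j=\tfrac25(b_{j-1}+b_{j+1})$. Hence $b_j\to r^j$ with $r=(5-\sqrt{21})/2\neq0$, and $\tilde\Psi_n(z)/\Phi_n(z;dm)\to z/(z-r)\neq1$. This is consistent with the persistent gap the paper itself reports numerically for $s=2$.

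The paper's proof of Theorem \ref{Strong Asymptotic for Psi-bar,s=2} breaks in two places:
\begin{enumerate}
\item[(i)] Identity \eqref{sa thm3.1 1} treats $\tilde\Psi_0,\dots,\tilde\Psi_{n-1}$ as mutually orthogonal for $\langle\cdot,\cdot\rangle_{\lambda/n^2}$. In fact each $\tilde\Psi_k$ is orthogonal only for its own $\langle\cdot,\cdot\rangle_{\lambda/k^2}$.
\item[(ii)] In Corollary \ref{Bounds for the partial sum of coefficients, s=2}, one has $\sum_{k=\floor{n/2}}^{n-1}R^{-2(n-k)}<1/(R^2-1)$ with no factor $R^{-2n}$. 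So the correct bound for that part of the sum is of order $\sqrt{\epsilon n}$, and \eqref{sa thm3.2 2} does not follow.
\end{enumerate}
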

As a consequence of the above results, we obtain the following ratio asymptotics for $\tilde{\Psi}_n(z)$.
\begin{theorem}\label{ratio asymptotic}
Let $\left\{\tilde{\Psi}_n(z)\right\}$ be the sequence of monic orthogonal polynomials with respect to the Sobolev inner product \eqref{sa ip s=2 3.1} on the unit circle. Then
		$\displaystyle\lim_{n \to \infty} \dfrac{\tilde{\Psi}_{n+1}(z)}{\tilde{\Psi}_n(z)} =z $
		uniformly on the compact subsets of $|z|>1$.
\end{theorem}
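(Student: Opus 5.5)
The plan mirrors the proof of Theorem \ref{Ratio asymptotic for Psi, s>2}, with $\Phi_n(z;d\mu_2)$ now playing the role that $\Phi_n(z;d\mu_1)$ played there. For $z$ in a compact set $K\subset\{|z|>1\}$ and $n$ large we write
\begin{align*}
\dfrac{\tilde{\Psi}_{n+1}(z)}{\tilde{\Psi}_n(z)}= \dfrac{\tilde{\Psi}_{n+1}(z)}{\Phi_{n+1}(z;d\mu_2)}\cdot \dfrac{\Phi_{n+1}(z;d\mu_2)}{\Phi_{n}(z;d\mu_2)} \cdot \dfrac{\Phi_{n}(z;d\mu_2)}{\tilde{\Psi}_n(z)}.
\end{align*}
We then treat the three factors one at a time.

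The first factor tends to $1$ uniformly on $K$ by Corollary \ref{Asymptotic consequences of Psi bar(s=2) and Phi(mu 2)}, applied along the shifted index $n+1$. The third factor is the reciprocal of the same quantity at index $n$. To pass to the reciprocal we need $\tilde{\Psi}_n$ to be zero-free on $K$ for large $n$, with $|\tilde{\Psi}_n(z)/\Phi_n(z;d\mu_2)|\geq 1/2$ there. This follows because the ratio converges uniformly to the constant $1$.

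For the middle factor we use that $\mu_2\in\mathcal{S}$, so $\mu_2\in\mathcal{N}$. The estimate $\left|\Phi_{n+1}(z;d\mu_2)/\Phi_n(z;d\mu_2)-z\right|\leq|\Phi_{n+1}(0;d\mu_2)|$ for $|z|\geq1$, recalled in the introduction, then shows that this factor tends to $z$ uniformly on $K$. Alternatively, Lemma \ref{Bound for classical Phi w.r.t first measure}, with $\mu_2$ in place of $\mu_1$, together with $\Phi_n(z;d\mu_2)/z^n\to G_2(z)\neq0$ gives the same conclusion.

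Combining the three limits, and using that $K$ is bounded so that $|z|\leq \max_K|z|$, a product of uniformly convergent, uniformly bounded factors converges uniformly to $1\cdot z\cdot 1=z$. The only mildly delicate point is the uniform nonvanishing of the denominators $\tilde{\Psi}_n$ and $\Phi_n(\cdot;d\mu_2)$ on $K$ for large $n$. We handle it by the uniform lower bound from Lemma \ref{Bound for classical Phi w.r.t first measure}(1), transferred to $\mu_2$, combined with the convergence of the ratio to $1$. Equivalently, one can divide the limit of Theorem \ref{Strong Asymptotic for Psi-bar,s=2} at indices $n+1$ and $n$, since the limit function is continuous and nonvanishing on $K$.
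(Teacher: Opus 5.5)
Your proposal is correct and is essentially the paper's proof: the same three-factor splitting, with Corollary~\ref{Asymptotic consequences of Psi bar(s=2) and Phi(mu 2)} controlling the two outer factors and $\mu_2\in\mathcal{N}$ giving $\Phi_{n+1}(z;d\mu_2)/\Phi_n(z;d\mu_2)\to z$ uniformly on $K$. Your explicit treatment of the nonvanishing of $\tilde{\Psi}_n$ on $K$ fills in a detail the paper leaves implicit (for $\Phi_n(\cdot;d\mu_2)$ nothing is needed, since its zeros lie in $\mathbb{D}$), and, like the paper's argument, yours is only as strong as Theorem~\ref{Strong Asymptotic for Psi-bar,s=2}, on which that corollary rests.
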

\begin{proof}
Since
\begin{align*}
			\dfrac{\tilde{\Psi}_{n+1}(z)}{\tilde{\Psi}_n(z)}=  \dfrac{\tilde{\Psi}_{n+1}(z)}{\Phi_{n+1}(z;d\mu_2)}\cdot \dfrac{\Phi_{n+1}(z;d\mu_2)}{\Phi_{n}(z;d\mu_2)} \cdot \dfrac{\Phi_{n}(z;d\mu_2)}{\tilde{\Psi}_n(z)},
\end{align*}
		the result follows from Corollary \ref{Asymptotic consequences of Psi bar(s=2) and Phi(mu 2)} and the fact that $\mu_2 \in \mathcal{N}$.
\end{proof}
If we proceed similarly as discussed in Section $2$, since the limit function in the strong asymptotic formula is non vanishing and analytic on $|z|>1$, by the application of the Hurwitz's theorem we can conclude that all the zeros of  $\tilde{\Psi}_n(z)$, lies in $|z|<1+\delta$.  This leads to the following statement :
\begin{corollary}\label{application of Hurwitz theorem for s=2}
For every $\delta >0$ and $n$ sufficiently large, the zeros of $\tilde{\Psi}_{n}(z)$ are in $|z|<1+\delta$.
\end{corollary}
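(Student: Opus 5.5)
The plan is to copy the argument behind Corollary \ref{Hurwitz, zeros of Psi, s>2}, now using the strong asymptotic formula of Theorem \ref{Strong Asymptotic for Psi-bar,s=2}. The first step is to name the limit function
\begin{align*}
G_2(z):=\dfrac{\overline{D(\mu^{\prime}_2,\bar{z}^{-1};d\mu_2)^{-1}}}{\overline{D(\mu^{\prime}_2,0;d\mu_2)^{-1}}}, \qquad |z|>1.
\end{align*}
I will check that $G_2$ is analytic and zero-free on $|z|>1$. Since $\mu_2\in\mathcal{S}$, the function $D(\mu_2^{\prime},w;d\mu_2)$ is the exponential of an analytic function of $w\in\mathbb{D}$. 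Hence $D$ and $D^{-1}$ are analytic and never vanish in $\mathbb{D}$. The map $z\mapsto \overline{F(\bar z^{-1})}$ takes a function $F$ analytic in $\mathbb{D}$ to one analytic in $|z|>1$, and it preserves the absence of zeros. The denominator is a nonzero constant. So $G_2$ is analytic and nonvanishing on $\{|z|>1\}$, and it extends to $z=\infty$ with the value $1$.

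Next, fix $\delta>0$ and consider the closed annulus $A_\delta=\{1+\delta\le |z|\le \rho\}$ for a large $\rho$. Theorem \ref{Strong Asymptotic for Psi-bar,s=2} gives $\tilde{\Psi}_n(z)/z^n\to G_2(z)$ uniformly on $A_\delta$. Because $G_2$ is continuous and zero-free on the compact set $A_\delta$, we have $m:=\min_{A_\delta}|G_2|>0$. For $n$ large the uniform convergence gives $|\tilde{\Psi}_n(z)/z^n-G_2(z)|<m/2$ on $A_\delta$, so $|\tilde{\Psi}_n(z)|\ge \tfrac{m}{2}|z|^n>0$ there. Equivalently, Hurwitz's theorem applied on a slightly larger open annulus shows that $\tilde{\Psi}_n(z)/z^n$ eventually has no zeros in $A_\delta$.

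The main obstacle is the unbounded region $|z|>\rho$, where the theorem only gives convergence on compact sets. I would pass to the variable $w=1/z$ and work with the reversed polynomials $w^n\tilde{\Psi}_n(1/w)$, which equal $1$ at $w=0$. The cleaner route is a direct estimate on $|z|\ge\rho$. Write $\tilde{\Psi}_n(z)/z^n=1+\sum_{j\ge1} c_{n,j}z^{-j}$, where the $c_{n,j}$ are the coefficients of $\tilde{\Psi}_n/z^n$ in powers of $1/z$. Applying Cauchy's estimates on the circle $|z|=1+\delta$ bounds these coefficients uniformly in $n$, by the uniform bound coming from the convergence (or from Proposition \ref{Uniform bound for Psi-bar, s=2}). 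Concretely, $|c_{n,j}|\le B(1+\delta)^{j}$ with $B$ independent of $n$. Then for $|z|\ge\rho$ with $\rho$ large enough,
\begin{align*}
\Bigl|\dfrac{\tilde{\Psi}_n(z)}{z^n}-1\Bigr|\le B\sum_{j\ge1}\Bigl(\dfrac{1+\delta}{\rho}\Bigr)^j<\dfrac12 .
\end{align*}
This bound holds for all $n$, so $\tilde{\Psi}_n$ has no zeros in $|z|\ge\rho$.

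Combining the two regions, for $n$ sufficiently large $\tilde{\Psi}_n$ has no zeros in $|z|\ge 1+\delta$. Hence all zeros of $\tilde{\Psi}_n$ lie in $|z|<1+\delta$, which proves Corollary \ref{application of Hurwitz theorem for s=2}.
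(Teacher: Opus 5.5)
Your argument is correct and is the same as the paper's: the paper notes that the limit in Theorem \ref{Strong Asymptotic for Psi-bar,s=2} is analytic and zero-free on $|z|>1$ and then invokes Hurwitz's theorem, as in Section 2. Your Cauchy-estimate treatment of the unbounded region $|z|\ge\rho$ makes explicit a step the paper leaves implicit; equivalently, the maximum principle applied to your $\tilde{\Psi}_n(z)/z^n-G_2(z)$, which is analytic at $\infty$, extends the uniform convergence to all of $|z|\ge 1+\delta$.
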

\begin{remark}\label{ about strong asymptotics for s=2 case }
Contrast to the observation in   Remark \ref{observation on SA, for s=2}, in case of $\tilde{\Psi}_n(z)$, the limit of the strong asymptotic formula in Theorem \ref{Strong Asymptotic for Psi-bar,s=2} depends on the measure $\mu_{2}$ as exist in the literature \cite{JAT1999- Aptekarev Berriochoa Cachafeiro-Strong asymptotics, constructive approx2003- berriochoa cachafeiro- Strong asymptotics for sobolev op, JCAM1998- Martinez-Asymptotics of the sobolev op}. In fact, by Corollary \ref{Asymptotic consequences of Psi bar(s=2) and Phi(mu 2)},   $\tilde{\Psi}_n(z)$ behave like $\Phi_{n}(z; d\mu_2)$ on the compact subsets of $\left|z\right|>1,$ for large $n$.
\end{remark}

The following result is a direct consequence of the strong asymptotic formulas established in this section and is stated without proof.
\begin{theorem}
				Let  $\{\tilde{\Psi}_{n}(z)\}_{n\geq 0}$ be the sequence of monic Sobolev orthogonal polynomials with respect to the inner product given by  \eqref{sa ip s=2 3.1}.  Let $K$ be the compact subset of $|z|>1$. Then the following holds:
\begin{enumerate}
					\item[\rm(1)] $\lim_{n \to \infty} \left( \sup_{z\in K} |\tilde{\Psi}_n(z)|\right)^{1/n} = \max_{z\in K} |z|$.
					\item[\rm(2)] $\lim_{n \to \infty} \left( \inf_{z\in K} |\tilde{\Psi}_n(z)|\right)^{1/n} = \min_{z\in K} |z|$.
\end{enumerate}
\end{theorem}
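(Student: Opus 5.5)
The plan is to read both limits off the strong asymptotic formula of Theorem \ref{Strong Asymptotic for Psi-bar,s=2}. Write
\[
G_2(z):=\dfrac{\overline{D(\mu^{\prime}_2,\bar{z}^{-1};d\mu_2)^{-1}}}{\overline{D(\mu^{\prime}_2,0;d\mu_2)^{-1}}}.
\]
This function is analytic and nonvanishing on $|z|>1$, because it is an exponential of an analytic function. On the compact set $K$ it is therefore continuous, so there are constants
\[
0<m:=\min_{z\in K}|G_2(z)|\leq \max_{z\in K}|G_2(z)|=:M<\infty .
\]

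The first step is to apply Theorem \ref{Strong Asymptotic for Psi-bar,s=2} with $\epsilon=m/2$. This gives an $N$ such that for all $n\geq N$ and all $z\in K$,
\[
\tfrac{m}{2}|z|^n\leq|\tilde{\Psi}_n(z)|\leq \left(M+\tfrac{m}{2}\right)|z|^n .
\]
This is the same two-sided bound as in Lemma \ref{Bound for classical Phi w.r.t first measure}, now for $\tilde{\Psi}_n$. Set $R=\max_K|z|$ and $r=\min_K|z|$, and take the supremum and infimum over $K$:
\[
\tfrac{m}{2}R^n\leq \sup_K|\tilde{\Psi}_n|\leq (M+\tfrac{m}{2})R^n,\qquad \tfrac{m}{2}r^n\leq \inf_K|\tilde{\Psi}_n|\leq (M+\tfrac{m}{2})r^n.
\]
The lower bound for the supremum comes from evaluating at a point $z_0\in K$ with $|z_0|=R$. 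The upper bound for the infimum comes from evaluating at a point with $|z|=r$. Both points exist because $K$ is compact.

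The last step is to take $n$-th roots. The constants satisfy $(m/2)^{1/n}\to1$ and $(M+m/2)^{1/n}\to1$, so the squeeze gives $(\sup_K|\tilde{\Psi}_n|)^{1/n}\to R$ and $(\inf_K|\tilde{\Psi}_n|)^{1/n}\to r$.

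I do not expect a real obstacle. The one point that needs care is that the lower bound must be uniform on $K$. This is exactly where the nonvanishing of $G_2$ on $K$ and the uniformity of the convergence in Theorem \ref{Strong Asymptotic for Psi-bar,s=2} are used. Without them, $\inf_K|\tilde{\Psi}_n|$ could decay faster than $r^n$.
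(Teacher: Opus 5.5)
Your deduction is the route the paper intends. The paper states this theorem without proof, as a direct consequence of Theorem~\ref{Strong Asymptotic for Psi-bar,s=2}, and your argument is exactly that consequence: the uniform two-sided bound $\frac{m}{2}|z|^n\le|\tilde{\Psi}_n(z)|\le(M+\frac{m}{2})|z|^n$ on $K$ (the analogue of Lemma~\ref{Bound for classical Phi w.r.t first measure}), tested at points of maximal and minimal modulus, then the $n$-th root squeeze. The one thing to change is the input. Run the argument for an arbitrary continuous, zero-free uniform limit $G$ of $\tilde{\Psi}_n(z)/z^n$ on $K$, not for the specific $\mu_2$-function $G_2$. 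The limit asserted in Theorem~\ref{Strong Asymptotic for Psi-bar,s=2} is wrong when $\mu_1\neq\mu_2$, so as written your first step invokes a false convergence.

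Here is why. The reversed polynomials $q_n=\tilde{\Psi}_n^{*}$ satisfy $q_n(0)=1$ and $|q_n|=|\tilde{\Psi}_n|$ on $\mathbb{T}$. They are bounded in $H^2$, since $\sigma_n^2\le\|z^n\|^2_{\frac{\lambda}{n^2}}=1+\lambda$ and $\mu_1'\ge c_1$. Moreover $|\tilde{\Psi}_n'/n|=|q_n-\frac{1}{n}wq_n'|$ on $\mathbb{T}$.

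Suppose $q_n\to h$ locally uniformly in $\mathbb{D}$. Then $q_n$ and $q_n-\frac{1}{n}wq_n'$ both converge to $h$ weakly in $H^2$, hence weakly in $L^2(\mu_1)$ and $L^2(\mu_2)$ because the weights are bounded. Lower semicontinuity gives $\liminf\sigma_n^2\ge\int|h|^2\,d\nu$ with $\nu=\mu_1+\lambda\mu_2$.

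In the other direction, use the monic test polynomials $z^n\overline{p(1/\bar z)}$, with $p$ a fixed polynomial and $p(0)=1$. They give $\limsup\sigma_n^2\le\int|p|^2\,d\nu$, hence $\limsup\sigma_n^2\le\exp\bigl(\frac{1}{2\pi}\int_0^{2\pi}\log\nu'\,d\theta\bigr)$ by Szeg\"o's extremal theorem.

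Uniqueness of the Szeg\"o extremal then forces $h=D_\nu(0)/D_\nu$, where $D_\nu$ is the Szeg\"o function of $\nu$. This equals $D_2(0)/D_2$ only if $\mu_1=\mu_2$. Concretely, take $\mu_1'\equiv 1$, $\mu_2'=\frac45|1-e^{\iota\theta}/2|^2$ (the paper's $\beta=2$ weight) and $\lambda=1$. The $\mu_2$-limit would force $\liminf\sigma_n^2\ge\frac43+\frac45>2=\|z^n\|^2_{\frac{\lambda}{n^2}}$, which is impossible. This is consistent with the persistent gap the paper itself reports in its $s=2$ numerics.

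The same argument together with Montel's theorem gives the correct strong asymptotic: $\tilde{\Psi}_n(z)/z^n\to\overline{D_\nu(0)}/\overline{D_\nu(\bar z^{-1})}$ uniformly on compact subsets of $|z|>1$. This limit is again analytic and zero-free there. So your squeeze argument goes through verbatim with $G_2$ replaced by it, and the statement you were asked to prove is true.
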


\section{Illustration with specific measures}
	
In the sequel, we support our analysis by considering specific case for each of the measure $\mu_1$ and $\mu_2$. For this purpose, we construct a polynomial with respect the inner product of the form
\begin{align}\label{sa 4.1}
		\langle f,g\rangle _{\frac{\lambda}{n^s}}:= \int_{\mathbb{T}} f(z)\overline{g(z)} \dfrac{1-\left|\alpha\right|^2}{\left|z-\alpha\right|^2}\dfrac{dz}{ 2\pi \iota z}+\dfrac{\lambda}{n^s} \int_{\mathbb{T}} f^{\prime}(z)\overline{g^{\prime}(z)}\dfrac{\left|z-\beta\right|^2}{1+\left|\beta\right|^2}\dfrac{dz}{ 2\pi \iota z},
\end{align}
	where $\lambda>0$,  $s$ fixed and $s \geq 2$. Here $|\alpha|<1$ and $\alpha \neq 0$ and $|\beta|>1$. We call this polynomial  as the Bernstein-Chritoffel Sobolev orthogonal polynomials.
	The properties of those measures associated with the inner product \eqref{sa 4.1} are well known in the literature; for example, see \cite{Adv. comput. Math2007- cachafeiro marcellan perez-Op with respect to the sum of two measure,Szego-book}. We now denote $\{\Phi_n(z;d\mu_i)\}$ as the sequence of monic orthogonal polynomials with respect to the measure $\mu_i$ for $i=1,2$. Then we have the following important asymptotic formulas  stated below:
\begin{itemize}
		\item Since $\mu_1\in \mathcal{S}$, by Szeg\"o strong asymptotic formula, we have
			\begin{align}\label{sa 4.2}
				\lim_{n \to \infty} \dfrac{\Phi_n(z,d\mu_1)}{z^n} =1-\dfrac{\alpha}{z}, \quad \left|\alpha\right|<1, \quad \alpha \neq 0
			\end{align}
			uniformly on compact subsets of $|z|>1$.
		\item Similarly
\begin{equation}\label{sa 4.3}
				\lim_{n \to \infty} \dfrac{\Phi_n(z,d\mu_2)}{z^n} =\dfrac{z\overline{\beta}}{z\overline{\beta} -1}, \qquad \left| \beta\right|>1
\end{equation}
			uniformly on compact subsets of   $\mathbb{C}\setminus\overline{\mathbb{D}}$.
\end{itemize}
Now to discuss further properties of the Bernstein-Christoffel polynomials, we start with  a connection formula stated below.
\begin{proposition}\label{connection formula}
			Let $\{R_{n}(s; z)\}_{n\geq 0}$ be the monic orthogonal polynomial sequence with respect to the inner product \eqref{sa 4.1}. Then we can write $\Phi_{n}(z,d\mu_1)$ as
\begin{align}\label{sa   4.4}
				\Phi_{n}(z,d\mu_1)=R_{n}(s; z)+\lambda_{n,n-1}R_{n-1}(s; z)+\lambda_{n,n-2}R_{n-2}(s; z),
\end{align}
			where $R_0(z)=1,$ $R_1(z)= z-\alpha$ and for $n \geq 2$
\begin{align*}
\lambda_{n,n-1}&=\dfrac{\lambda}{n^s}\left[-\dfrac{n(n-1)\beta}{1+\left|\beta\right|^2}-\alpha(n-1)^2+\dfrac{\alpha \beta(n-1)}{1+\left|\beta\right|^2}\dfrac{R_{n-1}^{(n-2)}(s; 0)}{(n-3)!}\right]\dfrac{1}{\lvert|R_{n-1}(s; z)\rvert|^2_{\frac{\lambda}{n^s}}},\\ \lambda_{n,n-2}&= \dfrac{\lambda}{n^s}\dfrac{\alpha\beta(n-1)(n-2)}{1+\left|\beta\right|^2}\dfrac{1}{\lvert| R_{n-2}(s; z)\rvert|^2_{\frac{\lambda}{n^s}}}.
\end{align*}
\end{proposition}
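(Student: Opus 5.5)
The plan is to expand $\Phi_n(z,d\mu_1)$ in the orthogonal basis $\{R_k(s;z)\}_{k=0}^{n}$. Since both are monic of degree $n$,
\begin{align*}
\Phi_n(z,d\mu_1)=R_n(s;z)+\sum_{k=0}^{n-1}\lambda_{n,k}R_k(s;z),\qquad \lambda_{n,k}=\dfrac{\langle \Phi_n(z,d\mu_1),R_k(s;z)\rangle_{\frac{\lambda}{n^s}}}{\|R_k(s;z)\|^2_{\frac{\lambda}{n^s}}}.
\end{align*}
The argument then has three steps: kill the $L^2(d\mu_1)$ part of the numerator, reduce the derivative part to a coefficient comparison, and read off the two surviving coefficients.

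\textbf{Step 1: only the derivative part survives.} For $k\le n-1$, $\Phi_n(z,d\mu_1)$ is orthogonal in $L^2(d\mu_1)$ to every polynomial of degree $\le n-1$. Hence the first integral in \eqref{sa 4.1} vanishes, and
\begin{align*}
\langle \Phi_n(z,d\mu_1),R_k(s;z)\rangle_{\frac{\lambda}{n^s}}=\dfrac{\lambda}{n^s}\int_{\mathbb{T}}\Phi_n^{\prime}(z,d\mu_1)\overline{R_k^{\prime}(s;z)}\,\dfrac{|z-\beta|^2}{1+|\beta|^2}\dfrac{dz}{2\pi\iota z}.
\end{align*}

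\textbf{Step 2: reduce to Parseval.} I would use the explicit Bernstein--Szeg\H{o} form $\Phi_n(z,d\mu_1)=z^{n-1}(z-\alpha)$ for $n\ge1$, which is consistent with $R_1=z-\alpha$ and with \eqref{sa 4.2}. On $\mathbb{T}$ we have $|z-\beta|^2=(z-\beta)\overline{(z-\beta)}$, so the integral above equals
\begin{align*}
\dfrac{1}{1+|\beta|^2}\big\langle (z-\beta)\Phi_n^{\prime},\,(z-\beta)R_k^{\prime}\big\rangle_{L^2(dm)}.
\end{align*}
By Parseval this is the sum of products of matching monomial coefficients. A direct computation gives
\begin{align*}
(z-\beta)\Phi_n^{\prime}(z,d\mu_1)=nz^n-\big(n\beta+\alpha(n-1)\big)z^{n-1}+\alpha\beta(n-1)z^{n-2}.
\end{align*}
On the other side, $(z-\beta)R_k^{\prime}$ has degree $k$ and leading coefficient $k$. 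Therefore the inner product vanishes for $k\le n-3$, and only $\lambda_{n,n-1}$ and $\lambda_{n,n-2}$ remain. This gives the three-term form \eqref{sa   4.4}.

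\textbf{Step 3: the two surviving coefficients.} For $k=n-2$, only the $z^{n-2}$ coefficients pair. This gives $\alpha\beta(n-1)(n-2)/(1+|\beta|^2)$ times $\lambda/n^s$, divided by $\|R_{n-2}\|^2_{\frac{\lambda}{n^s}}$, which is the stated $\lambda_{n,n-2}$.

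For $k=n-1$, two pairings contribute:
\begin{itemize}
\item the $z^{n-1}$ coefficients, giving $-(n\beta+\alpha(n-1))(n-1)$;
\item the $z^{n-2}$ coefficients, giving $\alpha\beta(n-1)$ times the $z^{n-2}$ coefficient of $(z-\beta)R_{n-1}^{\prime}$.
\end{itemize}
That last coefficient is the $z^{n-3}$ coefficient of $R_{n-1}^{\prime}$ minus $\beta(n-1)$. It should be expressed through derivatives of $R_{n-1}$ at $0$, namely $R_{n-1}^{(n-2)}(s;0)/(n-3)!$ (times the appropriate factor).

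I expect the main obstacle to be this last bookkeeping: matching the $-\beta(n-1)$ cross term and the conjugations of $\alpha,\beta$ and of the coefficients of $R_{n-1}$ with the exact normalization in the stated $\lambda_{n,n-1}$. To handle it I would write $R_{n-1}(s;z)=z^{n-1}+r_{n-2}z^{n-2}+\cdots$. Then $R_{n-1}^{(n-2)}(s;0)=(n-2)!\,r_{n-2}$, so the $z^{n-3}$ coefficient of $R_{n-1}^{\prime}$ is $(n-2)r_{n-2}=R_{n-1}^{(n-2)}(s;0)/(n-3)!$. Expanding the products and regrouping the $\beta$ and $\alpha$ terms should then reproduce the bracket in $\lambda_{n,n-1}$, possibly up to the conjugations the statement suppresses. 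Finally, the initial values $R_0=1$ and $R_1=z-\alpha$ I would check directly: for degree one, the derivative term cannot affect orthogonality to the constant $R_0$, since the derivative of a constant is zero.
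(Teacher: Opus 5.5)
Your proposal is correct. It has the same skeleton as the paper's proof: expand $\Phi_n(z,d\mu_1)$ in the $R_k$, kill the first integral by $L^2(d\mu_1)$-orthogonality, and use $\Phi_n(z,d\mu_1)=z^{n-1}(z-\alpha)$. The paper also uses this formula implicitly, and it follows from $\int_{\mathbb{T}} z^{n-j}(1-\bar\alpha z)^{-1}\,dm=0$ for $j<n$.

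Where you differ is in evaluating the derivative integral.
\begin{itemize}
\item The paper splits the weight as $1-\frac{\bar\beta z+\beta\bar z}{1+|\beta|^2}$, expands $\Phi_n^{\prime}\overline{R_j^{\prime}}$ into four groups of monomials, and checks twelve Cauchy integrals $I_1^{(1)},\dots,I_4^{(3)}$ by index inequalities.
\item You factor $|z-\beta|^2=(z-\beta)\overline{(z-\beta)}$ and apply Parseval.
\end{itemize}
Your device is cleaner. Vanishing for $k\le n-3$ becomes a degree count, and it shows why $-\alpha(n-1)^2$ appears without the factor $(1+|\beta|^2)^{-1}$.

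The bookkeeping you left open does close. Write $R_{n-1}(s;z)=z^{n-1}+r_{n-2}z^{n-2}+\cdots$. The $z^{n-1}$ and $z^{n-2}$ pairings give
\begin{align*}
&\frac{-(n-1)\big(n\beta+\alpha(n-1)\big)+\alpha\beta(n-1)\big((n-2)\overline{r_{n-2}}-\bar\beta(n-1)\big)}{1+|\beta|^2}\\
&\qquad=-\frac{n(n-1)\beta}{1+|\beta|^2}-\alpha(n-1)^2+\frac{\alpha\beta(n-1)}{1+|\beta|^2}\,\overline{\frac{R_{n-1}^{(n-2)}(s;0)}{(n-3)!}},
\end{align*}
because the two $\alpha(n-1)^2$ terms combine to $-\alpha(n-1)^2(1+|\beta|^2)$.

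This is the stated bracket except for the conjugate on $R_{n-1}^{(n-2)}(s;0)$. The paper carries that bar in $I_4$ but drops it in $I_4^{(2)}$. Dropping it is harmless only when the $R_k$ have real coefficients, for example with real $\alpha,\beta$ as in the numerics. Your form $(n-2)\overline{r_{n-2}}$ also handles $n=2$ correctly: there the term must be $0$, while the stated $(n-3)!$ is undefined.

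One point you share with the paper should be made explicit. The formula $\lambda_{n,k}=\langle \Phi_n,R_k\rangle_{\frac{\lambda}{n^s}}/\|R_k\|^2_{\frac{\lambda}{n^s}}$ requires $R_0,\dots,R_n$ to be mutually orthogonal in the single product $\langle\cdot,\cdot\rangle_{\frac{\lambda}{n^s}}$. Since \eqref{sa 4.1} changes with $n$, the $R_k$ with $k\le n$ must be read as the monic orthogonal polynomials of the fixed $n$-th product. Under that reading the statement holds, with the conjugate.
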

\begin{proof}
We can write the monic orthogonal polynomai $R_n(s; z)$ as
\begin{align*}
				R_n(s; z)=z^n + \sum_{i=1}^{n} \dfrac{R_n^{(n-i)}(s; 0)}{(n-i)!}z^{n-i},
\end{align*}
and its derivative polynomial $R^{\prime}_n(s; z)$ as
\begin{align*}
				R_n^{\prime}(s; z)=nz^{n-1} + \sum_{i=1}^{n-1} \dfrac{R_n^{(n-i)}(s; 0)}{(n-i-1)!}z^{n-i-1}.
\end{align*}
Now we can write
\begin{align*}
				\Phi_{n}(z,d\mu_1)=R_{n}(s; z)+\sum_{j=0}^{n-1}\lambda_{n,j} R_{j}(s; z),
\end{align*}
where
\begin{align*}
\lambda_{n,j}= \dfrac{\langle \Phi_{n}(z,d\mu_1),R_{j}(s; z) \rangle_{\frac{\lambda}{n^s}}}{{\langle R_{j}(s; z) ,R_{j}(s; z) \rangle}_{\frac{\lambda}{n^s}}}, \quad j=0,1,\ldots,n-1.
\end{align*}
Now consider
\begin{align*}
				I=&\langle \Phi_{n}(z,d\mu_1),R_{j}(s; z) \rangle_{\frac{\lambda}{n^s}}\\=& \int_{\mathbb{T}}\Phi_{n}(z,d\mu_1)\overline{R_{j}(s; z)}\dfrac{1-\left|\alpha\right|^2}{\left|z-\alpha\right|^2}\dfrac{dz}{ 2\pi \iota z}+\dfrac{\lambda}{n^s}\int_{\mathbb{T}}\Phi^{\prime}_{n}(z,d\mu_1)\overline{R^{\prime}_{j}(s; z)}\dfrac{\left|z-\beta\right|^2}{1+\left|\beta\right|^2}\dfrac{dz}{ 2\pi \iota z}.
\end{align*}
Using the orthogonality of $\Phi_{n}(z,d\mu_1)$, the first term vanishes. So we have
\begin{align*}
				I&=\dfrac{\lambda}{n^s}\int_{\mathbb{T}}\Phi^{\prime}_{n}(z,d\mu_1)\overline{R^{\prime}_{j}(s; z)}\dfrac{\left|z-\beta\right|^2}{1+\left|\beta\right|^2}\dfrac{dz}{ 2\pi \iota z}\\
				&=
				\dfrac{\lambda}{n^s}\int_{\mathbb{T}}\biggl( njz^{n-j}-\alpha(n-1)jz^{n-j-1}
				+n\sum_{i=1}^{j-1}\dfrac{\overline{R_{j}^{(j-i)}(s; 0)}}{{(j-i-1)!}} z^{n-j+i}\\& \qquad -\alpha(n-1)\sum_{i=1}^{j-1}  \dfrac{\overline{R_{j}^{(j-i)}(s; 0)}}{{(j-i-1)!}} z^{n-j+i-1} \biggr) \dfrac{\left|z-\beta\right|^2}{1+\left|\beta\right|^2}\dfrac{dz}{ 2\pi \iota z}\\
				&=\dfrac{\lambda}{n^s}\left(I_1+I_2+I_3+I_4\right),
\end{align*}
where
\begin{align*}
				I_1&= nj \int_{\mathbb{T}}z^{n-j}\dfrac{\left|z-\beta\right|^2}{1+\left|\beta\right|^2}\dfrac{dz}{ 2\pi \iota z},\\  I_2 &=-\alpha (n-1)j\int_{\mathbb{T}}z^{n-j-1} \dfrac{\left|z-\beta\right|^2}{1+\left|\beta\right|^2}\dfrac{dz}{ 2\pi \iota z}, \\I_3&= n\sum_{i=1}^{j-1}\dfrac{\overline{R_{j}^{(j-i)}(s; 0)}}{{(j-i-1)!}} \int_{\mathbb{T}}z^{n-j+i}\dfrac{\left|z-\beta\right|^2}{1+\left|\beta\right|^2}\dfrac{dz}{ 2\pi \iota z} , \\ I_4&= -\alpha(n-1)\sum_{i=1}^{j-1} \dfrac{\overline{R_{j}^{(j-i)}(s; 0)}}{{(j-i-1)!}} \int_{\mathbb{T}}z^{n-j+i-1}\dfrac{\left|z-\beta\right|^2}{1+\left|\beta\right|^2}\dfrac{dz}{ 2\pi \iota z}.
\end{align*}
By an easy computation we can write $I_1 = I_1^{(1)}+I_1^{(2)}+I_1^{(3)},$ where
\begin{align*}
				I_1^{(1)}&=nj\int_{\mathbb{T}}z^{n-j} \dfrac{dz}{2\pi\iota z},\\
				I_1^{(2)}&= -\dfrac{nj\overline{\beta}}{1+\left|\beta\right|^2}\int_{\mathbb{T}} z^{n-j+1}\dfrac{dz}{2\pi\iota z},\\
				I_1^{(3)}&= -\dfrac{nj\beta}{1+\left|\beta\right|^2}\int_{\mathbb{T}} z^{n-j-1}\dfrac{dz}{2\pi\iota z}
\end{align*}
and $0 \leq j \leq n-1.$
Since $1\leq n-j \leq n$, by using Cauchy integral formula  we conclude that $I_1^{(1)}=0.$ Similarly $2\leq(n+1)-j\leq (n+1)$ implies $I_1^{(2)}=0$. Note that  $ 0\leq (n-1)-j\leq (n-1)$ implies $I_1^{(3)}$ is non-zero for $j=n-1$. Hence
\begin{align*}
				I_1^{(3)}=-\dfrac{n(n-1)\beta}{1+\left|\beta\right|^2}.
\end{align*}
		
Similarly, we can write $I_2= I_2^{(1)}+I_2^{(2)}+I_2^{(3)},$ where
\begin{align*}
				I_2^{(1)}&= -\alpha (n-1)j\int_{\mathbb{T}}z^{n-j-1}\dfrac{dz}{ 2\pi \iota z} ,\\
				I_2^{(2)}&= \dfrac{\alpha \beta (n-1)j}{1+\left|\beta\right|^2}\int_{\mathbb{T}}z^{n-j-2}\dfrac{dz}{ 2\pi \iota z},\\
				I_2^{(3)}&= \dfrac{\alpha \overline{\beta} (n-1)j}{1+\left|\beta\right|^2}\int_{\mathbb{T}}z^{n-j}\dfrac{dz}{ 2\pi \iota z}
\end{align*}
and $0\leq j \leq n-1$. For $I_2^{(1)}$, note that as  $ 0\leq n-j-1\leq (n-1)$, so only for $j=n-1$ the above integral exists and by using Cauchy integral formula, it reduces to $I_2^{(1)}=-\alpha (n-1)^2.$
Again, $-1\leq n-j-2\leq (n-2)$ guarantees that the integral $I_2^{(2)}$ exists only for $j=n-2$. By using Cauchy integral formula we get
\begin{align*}
				I_2^{(2)}=\dfrac{\alpha\beta(n-1)(n-2)}{1+\left|\beta\right|^2}.
\end{align*}
For $1\leq n-j \leq n$, we conclude that $I_2^{(3)}=0$.	In a similar manner,  we can write $	I_3 = I_3^{(1)}+I_3^{(2)}+I_3^{(3)},$ where
\begin{align*}
				I_3^{(1)}&= n\sum_{i=1}^{j-1}\dfrac{\overline{R_{j}^{(j-i)}(s; 0)}}{{(j-i-1)!}}\int_{\mathbb{T}} z^{n-j+i}\dfrac{dz}{2\pi\iota z},\\
				I_3^{(2)}&=-\dfrac{n\beta}{1+\left|\beta\right|^2}\sum_{i=1}^{j-1}\dfrac{\overline{R_{j}^{(j-i)}(s; 0)}}{{(j-i-1)!}}\int_{\mathbb{T}} z^{n-j+i-1}\dfrac{dz}{2\pi\iota z},\\
				I_3^{(3)}&=-\dfrac{n\overline{\beta}}{1+\left|\beta\right|^2}\sum_{i=1}^{j-1}\dfrac{\overline{R_{j}^{(j-i)}(s; 0)}}{{(j-i-1)!}}\int_{\mathbb{T}} z^{n-j+i+1}\dfrac{dz}{2\pi\iota z}.
\end{align*}
$1\leq i\leq j-1\leq n-2 $ implies $2\leq i-j+n \leq 2n-2.$
So $I_3^{(1)}=0$ by Cauchy integral formula.
From above inequality we get $ 1\leq i-j+n-1\leq 2n-3$. This implies $	I_3^{(2)}=0$. Similarly, $ 3\leq i-j+n+1\leq 2n-1$ implies I$_3^{(3)}=0.$
We now write the integral $I_4$ as $I_4 = I_4^{(1)}+I_4^{(2)}+I_4^{(3)},$	where
\begin{align*}
					I_4^{(1)} &=	-\alpha(n-1)\sum_{i=1}^{j-1} \dfrac{\overline{R_{j}^{(j-i)}(s; 0)}}{{(j-i-1)!}}\int_{\mathbb{T}} z^{n-j+i-1}\dfrac{dz}{ 2\pi \iota z},\\
					I_4^{(2)}&=\dfrac{\alpha\beta(n-1)}{1+\left|\beta\right|^2}\sum_{i=1}^{j-1} \dfrac{\overline{R_{j}^{(j-i)}(s; 0)}}{{(j-i-1)!}}\int_{\mathbb{T}} z^{n-j+i-2}\dfrac{dz}{ 2\pi \iota z} \qquad {\mbox{and}}\\
					I_4^{(3)}&=\dfrac{\alpha\overline{\beta}(n-1)}{1+\left|\beta\right|^2}\sum_{i=1}^{j-1} \dfrac{\overline{R_{j}^{(j-i)}(s; 0)}}{{(j-i-1)!}}\int_{\mathbb{T}}z^{n-j+i}\dfrac{dz}{ 2\pi \iota z}.
\end{align*}
Since $1\leq n-j+i-1\leq 2n-3$,  we get $I_4^{(1)}=0.$ Proceeding similarly as before, $2\leq n+i-j\leq 2n-2$ implies $0\leq n+i-j-2\leq 2n-4 $. So by Cauchy integral formula, $I_4^{(2)}$ is non zero if $n+i-j=2$. But $0\leq j\leq n-1$ implies $-(n-2)\leq i \leq 1.$ So for $i=1, j=n-1$ the above integral exists. This gives
\begin{align*}
					I_4^{(2)}=\dfrac{\alpha\beta(n-1)}{1+\left|\beta\right|^2} \dfrac{R_{n-1}^{(n-2)}(s; 0)}{(n-3)!}.
\end{align*}
				
Since $2\leq i-j+n \leq 2n-2$, again by using the same arguments as above we can conclude that $I_4^{(3)}=0.$ Combining all the above values gives the result.
\end{proof}
The following observation is immediate.
\begin{remark}\label{asymptotic behaviour of the connection coefficients }
				It is easy to see that for $s>2$, $\lambda_{n,n-1}\to 0$ and $\lambda_{n,n-2} \to 0 $ as $ n \to \infty$.
\end{remark}
\begin{remark}
Lemma \ref{Uniform convergence of ratio of Psi and Phi, s>2} in Section 2 was proved in a general setting. In the present case, concerning the Bernstein--Christoffel Sobolev orthogonal polynomials associated with the inner product \eqref{sa 4.1}, the same result can be obtained from \eqref{sa 4.4}, although by a different argument. For completeness, we give this alternative derivation for the case $s>2$ below. We can write \eqref{sa   4.4} as
\begin{align*}
				1=&\dfrac{R_n(s>2; z)}{\Phi_n(z;d\mu_1)}+\lambda_{n,n-1}\dfrac{R_{n-1}(s>2; z)}{\Phi_{n-1}(z;d\mu_1)}\cdot\dfrac{\Phi_{n-1}(z;d\mu_
					1)}{\Phi_n(z;d\mu_1)}\\
                \qquad +&\lambda_{n,n-2} \dfrac{R_{n-2}(s>2; z)}{\Phi_{n-2}(z;d\mu_1)} \cdot\dfrac{\Phi_{n-2}(z;d\mu_1
					)}{\Phi_{n-1}(z;d\mu_1)}\cdot \dfrac{\Phi_{n-1}(z;d\mu_1
					)}{\Phi_{n}(z;d\mu_1)}.
\end{align*}
This can be rewritten as
\begin{align}\label{New recurrence after division}
				1= U_n(z)+\xi_n(z) U_{n-1}(z)+\eta_n(z) U_{n-2}(z)
\end{align}
			where	$U_n(z)= \dfrac{R_n(s>2; z)}{\Phi_n(z;d\mu_1)}$,
\begin{align*}
\xi_n(z) = \lambda_{n,n-1} \dfrac{\Phi_{n-1}(z;d\mu_1)}{\Phi_n(z;d\mu_1)} \quad {\mbox{and}}\quad
\eta_n(z)= \lambda_{n,n-2}\dfrac{\Phi_{n-2}(z;d\mu_1)}{\Phi_{n-1}(z;d\mu_1)}\cdot \dfrac{\Phi_{n-1}(z;d\mu_1)}{\Phi_{n}(z;d\mu_1)}.
\end{align*}
Since $\mu_1 \in \mathcal{N}$, we have $\lim_{n \to \infty}\dfrac{\Phi_{n}(z;d\mu_1)}{\Phi_{n-1}(z;d\mu_1)} =z$ uniformly on the compact subsets of $|z|>1$.  Since for $s>2$, $\lambda_{n,n-1}, \lambda_{n,n-2} \to 0$ as $n \to \infty$, we conclude that $\xi_n \to 0$ and  $\eta_n \to 0$  as $n \to \infty$ on the compact subsets $K$ of $ |z|>1$.  . Now, we choose $N $ so large to get
\begin{align*}
				\sup_{z\in K} \left( \left| \xi_n\right| + \left| \eta_n\right|\right) \leq \dfrac{1}{2}, \qquad n\geq N.
\end{align*}
The finitely many functions $U_0, \ldots , U_{N-1}$ are uniformly bounded on $K$. Choose $M \geq 2$ such that
\begin{align*}
				\sup_{z\in K} \left| U_j\right| \leq M, \qquad j=0,\ldots , N-1.
\end{align*}
In particular, if
\begin{align*}
				\sup_{z\in K} \left| U_{n-1}\right| \leq M , \qquad \sup_{z\in K} \left| U_{n-2}\right| \leq M,
\end{align*}
then we get
\begin{align*}
				\sup_{z\in K} \left| U_n\right| \leq 1+ M \sup_{z\in K} \left( \left| \xi_n\right| + \left|\eta_n\right|\right) \leq 1+\dfrac{M}{2} \leq M.
\end{align*}
Thus $U_n$ is uniformly bounded on $K$. Now using \eqref{New recurrence after division}, we can write
\begin{align*}
				\left| U_n -1 \right| \leq \left| \xi_n\right| \left| U_{n-1}\right| + \left| \eta_n\right| \left| U_{n-2}\right|,
\end{align*}
which implies
\begin{align*}
				\sup_{z\in K} \left| U_n -1\right| \leq M \sup_{z\in K} \left( \left| \xi_n\right| + \left| \eta_n\right|\right) \to 0.
\end{align*}
Therefore,
			$\lim_{n \to \infty}	U_n(z) =1$ uniformly on the compact subsets $K$. Since  $K$ is arbitrary, the desired result follows.
\end{remark}
\subsection{Asymptotic Distribution of the zeros}
As an application of the strong asymptotic formula established in Theorem \ref{Strong Asymptote for Psi, s>2 case} and  Theorem \ref{Strong Asymptotic for Psi-bar,s=2}, we now investigate  the asymptotic  distribution of the zeros of the Bernstein- Christoffel Sobolev orthogonal polynomials in the cases  $s>2$ and $s=2$.
Let $t_{n,1}, t_{n,2},\ldots, t_{n,n}$ denote the zeros of $R_n(s; z)$ (counted with multiplicity), with respect to the inner product \eqref{sa 4.1}, for the $s\geq2$ case. We associate the discrete unit measures with equal positive masses at each  zeros of $R_{n}(s; z)$, that is
\begin{align}\label{sa 4.6}
			\nu_n = \dfrac{\delta_{t_{n,1}}+\delta_{t_{n,2}}+\ldots +\delta_{t_{n,n}}}{n}.
\end{align}
Then we have the following result.
\begin{theorem}\label{distribution of zeros for s=2, s>2 case}
\begin{enumerate}[{\rm(1)}]
\item For $s>2$, let $\{z_{n,k}\}_{k=1}^n$ be the zeros (counted with multiplicity) of the monic Sobolev orthogonal polynomial $R_n(s>2; z)$, with respect to the inner product \eqref{sa 4.1} . Then $\nu_n$ defined by \eqref{sa 4.6}, converges to  normalized Lebesgue measure ${\mu_{\mathbb{T}}}$ in the weak* sense.
\item For $s=2$, let $\{\xi_{n,k}\}_{k=1}^n$ be the zeros of the monic Sobolev orthogonal polynomial $R_n(2; z)$, counted with multiplicity.  Then $\nu^{\prime}_n$ defined  similarly as \eqref{sa 4.6}, by using the zeros $\xi_{n,k}$'s, converges to ${\mu_{\mathbb{T}}}$ in the weak* sense.
\end{enumerate}
\end{theorem}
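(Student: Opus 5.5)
The plan is to argue through logarithmic potentials, using the strong asymptotic formulas already established. Fix $s>2$ first. By Theorem \ref{Strong Asymptote for Psi, s>2 case}, specialized to \eqref{sa 4.2} for the Bernstein--Christoffel case, we have $R_n(s;z)/z^n\to 1-\alpha/z$ uniformly on compact subsets of $|z|>1$. For $s=2$, Theorem \ref{Strong Asymptotic for Psi-bar,s=2} together with \eqref{sa 4.3} gives $R_n(2;z)/z^n\to z\overline{\beta}/(z\overline{\beta}-1)$ on the same sets. In both cases the limit $G$ is analytic and zero-free on $|z|>1$: $|\alpha|<1$ and $|1/\overline{\beta}|<1$. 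Taking logarithms, we obtain
\begin{align*}
\frac1n\log|R_n(s;z)|-\log|z|=\frac1n\log\left|\frac{R_n(s;z)}{z^n}\right|\to 0
\end{align*}
uniformly on compact subsets of $|z|>1$. Equivalently, $\int\log|z-t|\,d\nu_n(t)\to\log|z|=\int\log|z-t|\,d\mu_{\mathbb{T}}(t)$ for $|z|>1$.

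Next I would secure compactness. Corollaries \ref{Hurwitz, zeros of Psi, s>2} and \ref{application of Hurwitz theorem for s=2} show that for large $n$ all zeros lie in $|z|<1+\delta$, so the measures $\nu_n$ are eventually supported in a fixed compact disc. By Helly's theorem every subsequence has a weak* convergent further subsequence, with limit $\nu$ supported in $|z|\le 1$. For $|z|>1$ the function $t\mapsto\log|z-t|$ is continuous on a neighbourhood of the supports. Hence $U^\nu(z)=-\int\log|z-t|\,d\nu(t)=-\log|z|=U^{\mu_{\mathbb{T}}}(z)$ for all $|z|>1$.

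The main step is deducing $\nu=\mu_{\mathbb{T}}$ from equality of potentials only outside the closed disc, since $\nu$ could a priori charge the interior. Expanding $\log|z-t|=\log|z|-\mathrm{Re}\sum_{k\ge1}t^k/(kz^k)$ for $|z|>1\ge|t|$ gives $\int t^k\,d\nu=0$ for all $k\ge1$. These are exactly the holomorphic moments of $\mu_{\mathbb{T}}$. That alone does not force $\nu=\mu_{\mathbb{T}}$; any measure on the closed disc with vanishing holomorphic moments, such as $\delta_0$, would pass this test.

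So I need extra information showing that zeros do not accumulate inside $\mathbb{D}$. I would try to get it from norms. For $s>2$, Lemma \ref{Norm convergence of Poly difference of Phi and Psi, s>2} gives $\|R_n\|_{L^2(d\mu_1)}$ bounded, hence $\|R_n\|_{L^2(\mathbb{T})}$ bounded by \eqref{boundedness of the Radon-Nikodym derivatives}. For $s=2$, Proposition \ref{Inequality of limsup/inf Ratio of min norm to Phi norm, s=2} bounds $\sigma_n$, which again bounds $\|R_n\|_{L^2(d\mu_1)}$. From this, $\limsup\|R_n\|_{L^2(\mathbb{T})}^{1/n}\le 1=\mathrm{cap}(\mathbb{T})$. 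A bounded $L^2$ norm on $\mathbb{T}$ yields $\limsup\|R_n\|_{\infty,\mathbb{T}}^{1/n}\le1$ via the Nikolskii inequality $\|P\|_\infty\le\sqrt{n+1}\|P\|_2$. Then $\limsup\frac1n\log|R_n(z)|\le 0$ on $\mathbb{T}$.

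From there I would invoke the standard result (Blatt--Saff--Simkani / Mhaskar--Saff type): monic polynomials that are asymptotically extremal on $\mathbb{T}$, with zeros eventually in every neighbourhood of the polynomial convex hull $\overline{\mathbb{D}}$ and $\nu_n(K)\to0$ for compact $K\subset\mathbb{D}$, have zero counting measures converging to the equilibrium measure $\mu_{\mathbb{T}}$.

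The remaining obstacle is the condition $\nu_n(K)\to0$ for compact $K\subset\mathbb{D}$. I would get it with a Jensen-type argument. Suppose a limit $\nu$ put mass $m>0$ on $|t|\le r<1$. Then $U^\nu\le U^{\mu_{\mathbb{T}}}$ on $\mathbb{T}$ from the sup bound, while equality holds outside the disc. The function $U^\nu-U^{\mu_{\mathbb{T}}}$ is superharmonic-minus-harmonic on the disc and vanishes at infinity. Balayage of $\nu$ onto $\mathbb{T}$ gives back $\mu_{\mathbb{T}}$, because $\mu_{\mathbb{T}}$ is the only measure on $\mathbb{T}$ with vanishing holomorphic moments and real mass one. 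The lower envelope theorem on $\mathbb{T}$ then gives $U^\nu=U^{\mu_{\mathbb{T}}}$ quasi-everywhere on $\mathbb{T}$. Comparing with $U^\nu(0)$, which is finite and strictly smaller under interior mass, via the maximum principle should rule out interior mass.

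If this potential-theoretic route proves delicate, the fallback is to cite directly the theorem that $n$th-root asymptotics $|R_n|^{1/n}\to|z|$ outside $\overline{\mathbb{D}}$, together with $\limsup\|R_n\|_{\mathbb{T}}^{1/n}\le1$, imply weak* convergence to the equilibrium measure of $\mathbb{T}$, which is $\mu_{\mathbb{T}}$. The argument is identical for $s=2$ with $\xi_{n,k}$ in place of $z_{n,k}$.
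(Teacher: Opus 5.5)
Your first paragraph is the paper's proof. It gets $\frac1n\log|R_n(s;z)|\to\log|z|$ locally uniformly on $|z|>1$, notes that the zeros eventually lie in $|z|<1+\epsilon$, and then asserts $\nu_n\to\mu_{\mathbb{T}}$. You are right that this does not suffice. The exterior potential only determines the balayage of a limit measure onto $\mathbb{T}$, and $\mu_{\mathbb{T}}$ is exactly the balayage of $\delta_0$. So the paper's argument has precisely the gap you identify.

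Your repair does not close the gap, and cannot, because the missing hypothesis ($\nu_n(K)\to0$ for compact $K\subset\mathbb{D}$) is false here. Every input you use is already satisfied by $\Phi_n(z;d\mu_1)=z^{n-1}(z-\alpha)$, the monic OPs of the Poisson weight $\frac{1-|\alpha|^2}{|z-\alpha|^2}$:
\begin{itemize}
\item $\Phi_n(z;d\mu_1)/z^n=1-\alpha/z$ exactly;
\item its zeros lie in $\mathbb{D}$;
\item $\|\Phi_n(\cdot;d\mu_1)\|_{\infty,\mathbb{T}}\le1+|\alpha|$.
\end{itemize}
Yet its zero counting measures are $\frac{n-1}{n}\delta_0+\frac1n\delta_\alpha\to\delta_0$. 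So your fallback theorem is false; $z^n$ alone is a counterexample, and Blatt--Saff--Simkani genuinely needs the interior condition. The Jensen step also yields no contradiction. With your convention $U^\nu=-\int\log|z-t|\,d\nu$, the sup bound gives $U^\nu\ge U^{\mu_{\mathbb{T}}}=0$ q.e. on $\mathbb{T}$, not $\le$. Then $U^\nu-U^{\mu_{\mathbb{T}}}$ is superharmonic in $\mathbb{D}$ with nonnegative boundary values, hence nonnegative there. Interior mass only makes $U^\nu$ larger inside (for $\delta_0$, $U^\nu(0)=+\infty$), so comparing at $0$ gives nothing.

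\textbf{The statement itself is false.} By the paper's own observation, $R_n(s;z)=(z-\alpha)Q_{n-1}(s;z)$ with $z^{n-1}=Q_{n-1}+\lambda_{n,n-1}Q_{n-2}+\lambda_{n,n-2}Q_{n-3}$. Hence $V_n:=Q_{n-1}(s;z)/z^{n-1}$ satisfies $1=V_n+\lambda_{n,n-1}z^{-1}V_{n-1}+\lambda_{n,n-2}z^{-2}V_{n-2}$. For $s>2$ these coefficients tend to $0$ uniformly on any circle $|z|=r$ with $0<r<1$. The bounded induction in the paper's Remark (the one rederiving $R_n/\Phi_n\to1$) then runs verbatim there and gives $V_n\to1$ uniformly on $|z|=r$. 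Rouch\'e puts all $n-1$ zeros of $Q_{n-1}$ in $|z|<r$, so $\nu_n\to\delta_0$, not $\mu_{\mathbb{T}}$.

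This does not depend on the connection formula. Write the genuine minimizer as $\Phi_n(\cdot;d\mu_1)+\sum_{j<n}c_j\phi_j$ with $\phi_0=1$ and $\phi_j=z^{j-1}(z-\alpha)/\sqrt{1-|\alpha|^2}$. The normal equations are $(I+\epsilon_nA)c=-\epsilon_nb$, where:
\begin{itemize}
\item $\epsilon_n=\lambda n^{-s}$;
\item $A_{jk}=\langle\phi_k',\phi_j'\rangle_{\mu_2}$ is pentadiagonal (the weight of $\mu_2$ is a degree-one trigonometric polynomial) with $\|\epsilon_nA\|=O(n^{2-s})$;
\item $b$ is supported on the indices $n-2,n-1$, with $\epsilon_n|b|=O(n^{2-s})$.
\end{itemize}
Thus $c_0=0$ (since $\phi_0'=0$), the Neumann series gives $|c_{n-k}|=O(n^{(2-s)k/2})$, and the same Rouch\'e argument applies.

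For $s=2$ one has $\|\epsilon_nA\|=O(1)$. Demko--Moss--Smith decay for inverses of well-conditioned banded matrices then gives $|c_{n-k}|\le C\gamma^k$ with $\gamma<1$ independent of $n$. This forces every weak* limit of $\nu_n'$ to be supported in $|z|\le\gamma$, so part (2) fails as well. What the exterior asymptotics actually prove is only that the balayage onto $\mathbb{T}$ of any limit of the zero counting measures is $\mu_{\mathbb{T}}$.
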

\begin{proof}
We first prove the case $s>2$.
It is easy to see that
\begin{align}\label{sa thm4.1 1}
					\dfrac{\log|R_{n}(s>2; z)|}{n}= \sum_{k=1}^{n} \dfrac{1}{n}\log|z-z_{n,k}|= \int_{z\notin \text{supp\,}{\nu_n}} \log|z-w| d\nu_n.
\end{align}
By Corollary \ref{Hurwitz, zeros of Psi, s>2}, $\text{supp}{\nu_n} \subseteq \overline{\{z : |z|<1+\epsilon\}}$ for every $\epsilon>0$. Again, using Theorem \ref{Strong Asymptote for Psi, s>2 case} and right hand side of \eqref{sa thm4.1 1}, we can write
\begin{align}\label{sa thm4.1 2}
					\log|R_{n}(s>2; z)| = n \log|z| + \log\left|1-\dfrac{\alpha}{z}\right|+ {o}(1)
\end{align}
on the compact subsets $K$ of $|z|>1$. \eqref{sa thm4.1 2} implies
\begin{align}\label{sa thm4.1 3}
					\lim_{n \to \infty} \dfrac{1}{n}\log|R_n(s>2; z)| = \log|z|
\end{align}
on the compact subsets $K$ of $|z|>1$. Since $\{z: |z|>1+\epsilon\} \subseteq \{ z : |z|>1\}$, we can choose the compact set $K$  such that $K \subset \{z: |z|>1+\epsilon\}$. Thus we can write right hand side of \eqref{sa thm4.1 3} as the logarithmic potential of the measure $\nu_n$. That can be represented as
\begin{align*}
					P^{\nu_n}(z)=\int_{K}\log|z-w| d\nu_n= \dfrac{\log|R_{n}(s>2; z)|}{n}.
\end{align*}
From \eqref{sa thm4.1 2}, we have
\begin{align}\label{sa thm4.1 4}
					\lim_{n \to \infty} P^{\nu_n}(z)= \log|z| .
\end{align}
Now logarithmic potential of the normalized Lebesgue measure ${\mu_{\mathbb{T}}}$ on the unit circle is given by
(see \cite[Page 226]{2002-Andrievskii-HPBlattDiscrepancy-Approx-SpringerBook})
\begin{align*}
					P^{{\mu_{\mathbb{T}}}}(z)&= \int_{\mathbb{T}} \log|z-\xi| d{\mu_{\mathbb{T}}}(\xi)= \log|z|
\end{align*}
where $|z|>1$. From \eqref{sa thm4.1 4}, we conclude that
\begin{align*}
					\lim_{n \to \infty} P^{\nu_n}(z)= P^{{\mu_{\mathbb{T}}}}(z)\quad \text{uniformly on the compact subsets $K$ of } \quad |z|>1,
\end{align*}
from which we conclude that $\nu_n \stackrel{\ast}{\rightharpoonup} {\mu_{\mathbb{T}}}$.
				
For the case $s=2$, using \eqref{sa 4.3} and Theorem \ref{Strong Asymptotic for Psi-bar,s=2}, we get
\begin{align*}
					\lim_{n \to \infty} \dfrac{R_n(2; z)}{z^n}=\dfrac{z\overline{\beta}}{z\overline{\beta}-1}\quad \text{uniformly on compact subsets of } \quad |z|>1.
\end{align*}
By  similar argument, we can get the $\text{weak}^{*}$ convergence of the zeros $\{\xi_{n,k} \}_{k=1}^n$ and the proof is complete.
\end{proof}
\subsection{ Computations of the zeros via  Hessenberg matrix}
We are now interested in determining the zeros of the Bernstein-Christoffel   Sobolev orthogonal polynomials with respect to the inner product  \eqref{sa 4.1}, which satisfy the recurrence of the form \eqref{sa   4.4}. To this end, we begin by considering the matrix representation of the multiplication by the $z$ operator with respect to the basis $\{ \Phi_n(z;d\mu_1)\}_{n\geq 0}$ is given by
\begin{align}\label{sa 4.11}
			z\Phi(z;d\mu_1) = H^{(\mu_1)}_{\Phi}\Phi(z;d\mu_1),
\end{align}
where $\Phi(z;d\mu_1) = \big[\Phi_{0}(z;d\mu_1), \Phi_{1}(z;d\mu_1), \ldots\big]^{T}$ and $H^{(\mu_1)}_{\Phi}$ is a semi-infinite Hessenberg matrix with $1$'s on the upper diagonal and whose remaining entries are given in terms of the Verblunsky's coefficients $\{\Phi_{n}(0;d\mu_1)\}$ (see \cite{2012- Numerical Algorithm -Castillo Garza Paco, OPUCVol1-Simon Barry}). It is well known that the zeros of the $n$-th orthogonal polynomial $\Phi_{n}(z;d\mu_1)$  are the eigenvalues of the leading principal submatrix $n\times n$ of the Hessenberg matrix $H^{(\mu_1)}_{\Phi}$ (see \cite{OPUCVol1-Simon Barry}), which we will denote by $H^{(\mu_1)}\left(n; \Phi \right)$. In particular, if we use $\{\Phi_{0}(z;d\mu_1), \Phi_{1}(z;d\mu_1), \ldots, \Phi_{n-1}(z;d\mu_1)\}$ as a basis  then we can write \eqref{sa 4.11} as
\begin{align}\label{sa 4.12}
			z\Phi(n; z;d\mu_1) = H^{(\mu_1)}\left(n;\Phi\right)\Phi (n; z;d\mu_1)+\Phi_n(z;d\mu_1)	\begin{bmatrix}
				0\\
				0 \\
				\vdots \\
				1\\
			\end{bmatrix}
\end{align}
where $\Phi(n; z;d\mu_1) = \big[\Phi_{0}(z;d\mu_1), \Phi_{1}(z;d\mu_1), \ldots, \Phi_{n-1}(z;d\mu_1)\big]^{T}$. Let $\{z_{n,j}\}_{j=1}^n$ be the  zeros of the polynomial $\Phi_n(z;d\mu_1)$, which are also the eigenvalues of the Hessenberg matrix $H^{(\mu_1)} \left(n; \Phi\right)$. Thus from \eqref{sa 4.12}, one has $z_{n,j}\Phi(n; z_{n,j};d\mu_1) = H^{(\mu_1)}(n; \Phi)\Phi(n; z_{n,j};d\mu_1), \newline \quad j=1,2\ldots,n.$ Denote $\mathcal{R}(n; s; z) = \big[R_{0}(s; z), R_{1}(s; z), \ldots, R_{n-1}(s; z)\big]^{T}$ where $R_n(s; z)$ is the $n$th  Sobolev orthogonal polynomial with respect to the inner product \eqref{sa 4.1}, where $s$ is fixed, and $s\geq 2$. Then we can write \eqref{sa 4.4} in the following matrix form
\begin{equation*}
			\begin{bmatrix}
				\Phi_{0}(z;d\mu_1) \\
				\Phi_{1}(z;d\mu_1) \\
				\Phi_{2}(z; d\mu_1)\\
				\vdots \\
				\Phi_{n-1}(z;d\mu_1) \\
			\end{bmatrix}
			= \begin{bmatrix}
				1    & 0 & 0 &0 & \dots & 0 \\
				\lambda_{1,0}       & 1 & 0 & 0 & \dots & 0  \\
				\lambda_{2,0} & \lambda_{2,1} & 1 & 0 & \ldots & 0 \\
				\vdots & \vdots& \vdots & \vdots& \ddots  &\vdots\\
				0      & 0 & 0 & \dots &\lambda_{n,n-2} & 1
			\end{bmatrix}
			\begin{bmatrix}
				R_{0}(z) \\
				R_{1}(z) \\
				R_{3}(z) \\
				\vdots\\
				R_{n-1}(z) \\
			\end{bmatrix} .
\end{equation*}
This means
\begin{align}\label{sa 4.13}
			\Phi(n; z;d\mu_1)= T(n; \lambda)\mathcal{R}(n;s; z),
\end{align}
where
\begin{equation*}
			T(n; \lambda)	= \begin{bmatrix}
				1    & 0 & 0 &0 & \dots & 0 \\
				\lambda_{1,0}       & 1 & 0 & 0 & \dots & 0  \\
				\lambda_{2,0} & \lambda_{2,1} & 1 & 0 & \ldots & 0 \\
				\vdots & \vdots& \vdots & \vdots& \ddots  &\vdots\\
				0      & 0 & 0 & \dots &\lambda_{n,n-2} & 1
			\end{bmatrix},
\end{equation*}
and $\Phi(n; z;d\mu_1), \mathcal{R}(n; s; z)$ are defined as above.  Note that $T (n;\lambda)$ is a lower triangular matrix, with all of its diagonal entries as $1$. So $T(n; \lambda)$ is invertible. Thus we can write \eqref{sa 4.13} as
\begin{align}\label{sa 4.14}
			\mathcal{R}(n; s; z)= T(n; \lambda)^{-1}\Phi(n; z;d\mu_1).
\end{align}
Note that, we can also write $R_n(s; z)$ as
\begin{align}\label{fourier expansion of Rn w.r.to Phi_n}
			R_n(s; z)=\Phi_{n}(z;d\mu_1)+\sum_{k=0}^{n-1}d_{n,k} \Phi_k(z;d\mu_1).
\end{align}
\begin{theorem}\label{Computation of zeros by using Hessenberg matrix}
			For $n \geq 2,$ let $B_n$ be the $n \times n$ matrix
\begin{equation}\label{matrix Bn}
				B_n := \begin{bmatrix}
					0  & 0 & 0 &0 & \dots & 0 \\
					0       &  0 & 0& 0 & \dots & 0  \\
					
					\vdots & \vdots& \vdots & \vdots& \ddots  &\vdots\\
					0 & 0 & 0 &0& \ldots & 0 \\
					d_{n,0}   & d_{n,1}& d_{n,2} & \dots &d_{n,n-2} & d_{n,n-1}
				\end{bmatrix},
\end{equation}
where the elements $d_{n,k}$ are as in \eqref{fourier expansion of Rn w.r.to Phi_n}.
Then the zeros of  $R_n(s; z)$ are the solution of the eigenvalue problem of the matrix $H^{(\mu_1)}(n; \Phi)-B_n $.
\end{theorem}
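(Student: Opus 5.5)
The plan is to start from the finite-section identity \eqref{sa 4.12}, $z\Phi(n;z;d\mu_1)=H^{(\mu_1)}(n;\Phi)\Phi(n;z;d\mu_1)+\Phi_n(z;d\mu_1)e_n$, where $e_n=[0,\ldots,0,1]^T$. We then eliminate $\Phi_n(z;d\mu_1)$ using the expansion \eqref{fourier expansion of Rn w.r.to Phi_n}, which gives
\begin{align*}
\Phi_n(z;d\mu_1)=R_n(s;z)-\sum_{k=0}^{n-1}d_{n,k}\Phi_k(z;d\mu_1).
\end{align*}
The sum $\sum_{k=0}^{n-1}d_{n,k}\Phi_k(z;d\mu_1)$ is exactly the last entry of $B_n\Phi(n;z;d\mu_1)$, since $B_n$ has only its last row nonzero and that row is $(d_{n,0},\ldots,d_{n,n-1})$. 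Substituting, we obtain
\begin{align*}
z\Phi(n;z;d\mu_1)=\bigl(H^{(\mu_1)}(n;\Phi)-B_n\bigr)\Phi(n;z;d\mu_1)+R_n(s;z)e_n .
\end{align*}

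Next, let $\zeta$ be a zero of $R_n(s;z)$ and evaluate the identity at $z=\zeta$. The last term vanishes, leaving $\zeta\,\Phi(n;\zeta;d\mu_1)=(H^{(\mu_1)}(n;\Phi)-B_n)\Phi(n;\zeta;d\mu_1)$. The vector $\Phi(n;\zeta;d\mu_1)$ is nonzero because its first entry is $\Phi_0\equiv1$. So $\zeta$ is an eigenvalue of $H^{(\mu_1)}(n;\Phi)-B_n$ with eigenvector $\Phi(n;\zeta;d\mu_1)$. Equivalently, by \eqref{sa 4.14}, one may use the eigenvector $T(n;\lambda)\mathcal{R}(n;s;\zeta)$.

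For the converse, and to account for multiplicities, the aim is the identity
\begin{align*}
\det\bigl(zI_n-(H^{(\mu_1)}(n;\Phi)-B_n)\bigr)=R_n(s;z).
\end{align*}
Both sides are monic of degree $n$. For the left side, note that $H^{(\mu_1)}(n;\Phi)$ is lower Hessenberg with $1$'s on the superdiagonal, and subtracting $B_n$ only changes the last row, so the matrix stays lower Hessenberg. The characteristic polynomial of such a matrix is determined by the vector identity above. Concretely, the first $n-1$ rows of $zI-(H-B_n)$ are the same as those of $zI-H$, and these rows force the cofactor expansion along the last row to reproduce the polynomial $p(z)$ with $(zI-(H-B_n))\Phi(n;z)=p(z)e_n$. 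That polynomial is $p=R_n(s;\cdot)$.

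The cleanest way to carry this out is the adjugate argument. The row vector $(\Phi_0,\ldots,\Phi_{n-1})$ is proportional to the last row of $\mathrm{adj}(zI-A)$, where $A=H^{(\mu_1)}(n;\Phi)-B_n$. This proportionality holds because the superdiagonal entries are $1$, so the cofactor $(n,1)$ equals $1=\Phi_0$, and it pins down the scaling. Hence $\det(zI-A)=R_n(s;z)$. The same argument applied with $B_n=0$ gives the classical fact that $\det(zI-H^{(\mu_1)}(n;\Phi))=\Phi_n(z;d\mu_1)$, which serves as a consistency check.

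The step I expect to be the main obstacle is this determinant identification, in particular keeping the lower-Hessenberg and superdiagonal conventions consistent with \eqref{sa 4.12}. The eigenvector direction is immediate; the multiplicity statement is what requires the characteristic-polynomial argument.
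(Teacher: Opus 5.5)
Your proof is correct, but it is organized differently from the paper's.

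\textbf{Your route.} You substitute \eqref{fourier expansion of Rn w.r.to Phi_n} directly into \eqref{sa 4.12}. This gives $z\Phi(n;z;d\mu_1)=\bigl(H^{(\mu_1)}(n;\Phi)-B_n\bigr)\Phi(n;z;d\mu_1)+R_n(s;z)e_n$. You then identify $\det\bigl(zI_n-(H^{(\mu_1)}(n;\Phi)-B_n)\bigr)=R_n(s;z)$ by a cofactor argument.

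\textbf{The paper's route.} The paper introduces the Hessenberg matrix $M(n)$ of the Sobolev polynomials themselves, built from $zR_{k-1}(s;z)=R_k(s;z)+\sum_j c_{k,j}R_j(s;z)$. It transports $z\mathcal{R}=M(n)\mathcal{R}+R_n e_n$ to the $\Phi$-basis using $\Phi(n;z;d\mu_1)=T(n;\lambda)\mathcal{R}(n;s;z)$ and $T(n;\lambda)e_n=e_n$. Linear independence of $\Phi_0,\dots,\Phi_{n-1}$ then forces $H^{(\mu_1)}(n;\Phi)-B_n=T(n;\lambda)M(n)T(n;\lambda)^{-1}$, and multiplicities follow from similarity.

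\textbf{Comparison.} The paper's argument yields a structural fact: the modified Hessenberg matrix is similar to the recurrence matrix of the $R_k$. However, it leans on the unproved remark that the zeros of $R_n(s;z)$ are the eigenvalues of $M(n)$. That remark is exactly the Hessenberg characteristic-polynomial fact you prove directly. Your route needs neither $T(n;\lambda)$, $M(n)$, nor the connection formula, and it gives the characteristic polynomial explicitly.

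\textbf{Details to tighten.} First, the vector $w$ with $(zI-A)w=\det(zI-A)e_n$ is the last \emph{column} of $\operatorname{adj}(zI-A)$, i.e.\ the cofactors of the last row of $zI-A$, not its last row. Second, the identification $w=\Phi(n;z;d\mu_1)$ needs a reason beyond the normalization. For every $z$, the first $n-1$ rows of $zI-A$ have rank $n-1$: deleting the first column leaves a lower triangular block with $-1$ on the diagonal. So both vectors lie in a one-dimensional kernel. They are then matched by their first entries, since the cofactor of the $(n,1)$ entry equals $1=\Phi_0$.
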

\begin{proof}
			Let
\begin{align}\label{sa thm 4 4.17}
				zR_{n-1}(s; z)= R_n(s; z)+\sum_{k=0}^{n-1}c_{n,k} R_k(s; z), \quad n\geq 1.
\end{align}
Then, we can write \eqref{sa thm 4 4.17} in the form
\begin{align}\label{sa thm 4 4.18}
				z\mathcal{R}(n; s; z)= M(n)\mathcal{R}(n; s; z)+R_n(s; z)\begin{bmatrix}
					0\\
					0 \\
					\vdots \\
					1\\
				\end{bmatrix}
\end{align}
where
\begin{equation*}
				M(n)= \begin{bmatrix}
					c_{1,0}   & 1 & 0 &0 & \dots & 0 \\
					c_{2,0}       & c_{2,1} & 1 & 0 & \dots & 0  \\
					c_{3,0} & c_{3,1} & c_{3,2}& 1 & \ldots & 0 \\
					\vdots & \vdots& \vdots & \vdots& \ddots  &\vdots\\
					c_{n,0}    & c_{n,1}& c_{n,2} & \dots &c_{n,n-2} & c_{n,n-1}
				\end{bmatrix}.
\end{equation*}
Note that the zeros of $R_n(s; z)$ are the eigenvalues of $M(n)$. Denoting $e_n = [0,0,\ldots,1]^T$, from \eqref{sa 4.13} we get
\begin{align}\label{sa thm 4 4.19}
				z \Phi(n; z;d\mu_1)= T(n; \lambda)z\mathcal{R}(n; s; z).
\end{align}
Now employing \eqref{sa 4.12} and \eqref{sa thm 4 4.18} in \eqref{sa thm 4 4.19}, we get
\begin{align}\label{sa thm 4 4.20}
				H^{(\mu_1)}(n; \Phi)\Phi(n; z;d\mu_1)+\Phi_n(z;d\mu_1) e_n = T(n;\lambda)\big[M(n)\mathcal{R}(n; s; z)+R_n(s; z) e_n \big].
\end{align}
Using \eqref{sa 4.14} in \eqref{sa thm 4 4.20}, and the fact that $T(n; \lambda)e_n =e_n$, we have
\newline
		$
		\displaystyle
		H^{(\mu_1)}(n; \Phi)\Phi(n; z;d\mu_1)=
		$
\begin{align}\label{sa thm 4 4.21}
			T(n; \lambda)M(n)T(n; \lambda)^{-1}\Phi(n; z;d\mu_1)+ \big[ R_n(s; z)-\Phi_n(z;d\mu_1)\big]e_n.
\end{align}
On the other hand, by using \eqref{fourier expansion of Rn w.r.to Phi_n}, we can write \eqref{sa thm 4 4.21} as
\begin{align*}
			\big[H^{(\mu_1)}(n; \Phi)-B_n -T(n; \lambda)M(n) T(n; \lambda)^{-1}\big]\Phi(n; z ;d\mu_1)=0
\end{align*}
for every $z$. Define $A := H^{(\mu_1)}(n; \Phi)-B_n -T(n; \lambda)M(n) T(n; \lambda)^{-1}.$ Then from above we can write
\begin{align*}
			A\Phi(n; z ;d\mu_1) =0 \qquad \text{for every} \quad z.
\end{align*}
Write $i$-th row of $A$ as $\left(a_{i,0}, a_{i,1}, \ldots, a_{i,n-1}\right).$ Then the $i$-th component of $	A\Phi(n; z ;d\mu_1) =0$ gives
\begin{align*}
			a_{i,0} \Phi_{0}(z; d\mu_1)+ a_{i,1} \Phi_{1}(z;d\mu_1)+\ldots + a_{i,n-1}\Phi_{n-1}(z;d\mu_1)=0.
\end{align*}
Since the polynomials $\Phi_{0}(z; d\mu_1), \Phi_{1}(z;d\mu_1), \ldots, \Phi_{n-1}(z;d\mu_1) $ are linearly independent, we must have $	a_{i,0}=  a_{i,1} = \ldots = a_{i,n-1}=0.$ This holds for every $i$, and hence $A=0$. Therefore,
\begin{align*}
			H^{(\mu_1)}(n; \Phi)-B_n = T(n; \lambda)M(n) T(n; \lambda)^{-1}.
\end{align*}
Thus $H^{(\mu_1)}(n; \Phi)-B_n $ is similar to $M(n).$ Hence the eigenvalues of $\left(H^{(\mu_1)}(n; \Phi)-B_n \right)$ and the eigenvalues of $\left( M(n)\right),$ are same up to their algebraic multiplicities. Consequently, the zeros of $R_n(s; z)$ are the precisely the eigenvalues of $	H^{(\mu_1)}(n; \Phi)-B_n .$
\end{proof}
	
\subsection{Location  of the zeros}
Motivated by Theorem \ref{Computation of zeros by using Hessenberg matrix}, we now turn our attention to the theoretical and numerical analysis of  locations of the zeros of the  polynomials $R_n(s; z)$  which are defined recursively by relation \eqref{sa 4.4} in Proposition \ref{connection formula}. Following the parameter choices, we proceed to numerically validate the case $s>2$ by specifically considering $s=3$. For both cases $s=2$ and $s=3$, we construct the polynomials $R_n(s; z)$ up to degree $10$. To proceed further, we start with a simple observation. Using induction hypothesis, one can easily see that, $R_n(s; z)$ can be written as $R_n(s; z)= (z-\alpha) Q_{n-1}(s; z)$, for all $n \geq 1$, where $Q_{n-1}(s; z)$ is a $n-1$ degree polynomial, satisfying the recurrence relation
\begin{align*}
		z^{n-1}= Q_{n-1}(s; z)+ \lambda_{n,n-1} Q_{n-2}(s; z)+\lambda_{n,n-2} Q_{n-3}(s; z) \quad \text{for all}\quad n \geq 1.
\end{align*}
From this recursion, it is observed that $\alpha$ is a common zero of the polynomials $R_n(s; z)$, for all $n \geq 1$. An easy computation shows that for each case, there is a common zero at $0.5$. To discuss  the location of the other zeros of these recursive polynomials, we will prove the following results:
\begin{theorem}\label{Location of zeros of R_n, s>2 case}
		For $s>2$, there exist $N_0$, such that for all $n\geq N_0$, all the zeros of the polynomials $R_n(s>2; z)$, defined recursively by \eqref{sa   4.4},  lies in the $\mathbb{D}$.
\end{theorem}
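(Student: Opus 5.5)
The plan is to exploit the explicit structure of the Bernstein--Szeg\H{o} measure $\mu_1$. For the weight $\frac{1-|\alpha|^2}{|z-\alpha|^2}$ one has $\Phi_n(z;d\mu_1)=z^{n-1}(z-\alpha)$ for $n\ge 1$, which is consistent with \eqref{sa 4.2}. Together with the factorization $R_n(s;z)=(z-\alpha)Q_{n-1}(s;z)$ noted just before the statement, this reduces the problem to the polynomials $Q_{n-1}$. These satisfy
\begin{align*}
z^{n-1}=Q_{n-1}(s;z)+\lambda_{n,n-1}Q_{n-2}(s;z)+\lambda_{n,n-2}Q_{n-3}(s;z).
\end{align*}
Since $|\alpha|<1$, it suffices to show that $Q_{n-1}(s;z)$ has no zeros in $|z|\ge 1$ for all large $n$.

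Set $V_n(z)=Q_{n-1}(s;z)/z^{n-1}$. This is analytic in $|z|\ge 1$, including at $\infty$, where $V_n(\infty)=1$. Dividing the recurrence by $z^{n-1}$ gives
\begin{align*}
1=V_n(z)+\frac{\lambda_{n,n-1}}{z}V_{n-1}(z)+\frac{\lambda_{n,n-2}}{z^2}V_{n-2}(z).
\end{align*}
On the closed unbounded set $E=\{|z|\ge 1\}$ we have $|1/z|\le 1$. The bootstrapping argument of the Remark following Remark \ref{asymptotic behaviour of the connection coefficients } then runs verbatim with $\xi_n=\lambda_{n,n-1}/z$ and $\eta_n=\lambda_{n,n-2}/z^2$. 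Choose $N$ with $|\lambda_{n,n-1}|+|\lambda_{n,n-2}|\le \tfrac12$ for $n\ge N$, and bound the finitely many $V_j$, $j<N$, on $E$ by a constant $M\ge 2$ via the maximum principle on $\{|z|\ge1\}\cup\{\infty\}$. Induction gives $\sup_E|V_n|\le M$, and hence
\begin{align*}
\sup_{E}|V_n-1|\le M\bigl(|\lambda_{n,n-1}|+|\lambda_{n,n-2}|\bigr)\to 0 .
\end{align*}
Picking $N_0$ so that this supremum is below $1/2$ for $n\ge N_0$, $V_n$ has no zeros on $E$. Hence every zero of $Q_{n-1}$, and therefore of $R_n(s>2;z)$, lies in $\mathbb{D}$.

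The main obstacle is justifying $\lambda_{n,n-1},\lambda_{n,n-2}\to 0$ rigorously, since Remark \ref{asymptotic behaviour of the connection coefficients } only asserts this. For $\lambda_{n,n-2}$ it is immediate once $\|R_{n-2}\|_{\lambda/n^s}$ is bounded below. By the minimal norm property, $\|R_{n-2}\|^2_{\lambda/n^s}\ge\|\Phi_{n-2}(\cdot;d\mu_1)\|^2_{L^2(d\mu_1)}\ge c>0$, so $\lambda_{n,n-2}=O(n^{2-s})$.

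For $\lambda_{n,n-1}$, the troublesome term is $R_{n-1}^{(n-2)}(s;0)/(n-3)!$, which equals $(n-2)$ times the coefficient of $z^{n-2}$ in $R_{n-1}$. That coefficient is $q_{n-2}-\alpha$, where $q_{n-2}$ is the coefficient of $z^{n-3}$ in $Q_{n-2}$, i.e.\ the coefficient of $z^{-1}$ in the expansion of $V_{n-1}$ at $\infty$. By Cauchy's estimate on $|z|=1$, $|q_{n-2}|\le\sup_E|V_{n-1}|$. Hence $|\lambda_{n,n-1}|\le C n^{2-s}\bigl(1+\sup_E|V_{n-1}|\bigr)$. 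I would therefore fold this bound into the same induction. Assuming $\sup_E|V_{n-1}|,\sup_E|V_{n-2}|\le M$ gives $|\lambda_{n,n-1}|+|\lambda_{n,n-2}|\le C'(1+M)n^{2-s}$. This is at most $\tfrac12$ once $n$ exceeds a threshold depending on $M$; since $s>2$, $M$ can be fixed first from the initial segment and the threshold chosen afterwards. The induction then closes, yielding both the uniform bound and $\lambda$'s $\to0$ simultaneously, which completes the argument.
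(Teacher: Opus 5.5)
Your reduction is sound and is essentially the paper's argument. $V_n=Q_{n-1}/z^{n-1}$ is exactly $R_n/\Phi_n(\cdot;d\mu_1)$. Showing $\sup_{|z|\ge1}|V_n-1|<\frac12$ is the same estimate as the paper's $|R_n-\Phi_n|<1-|\alpha|\le|\Phi_n|$ on $|z|=1$ followed by Rouch\'e; the paper bootstraps $A_n=\max_{|z|=1}|R_n|$ instead of $\sup_E|V_n|$. You are also right that the decay of $\lambda_{n,n-1}$ is the weak point: the paper takes $|\lambda_{n,n-1}|+|\lambda_{n,n-2}|\le C/n^{\delta}$ from the Remark and never addresses the term $R_{n-1}^{(n-2)}(s;0)/(n-3)!$.

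\textbf{The gap.} Your handling of that term does not close the induction. With $|\lambda_{n,n-1}|\le Cn^{2-s}(1+\sup_E|V_{n-1}|)$, the threshold $N(M)$ beyond which $|\lambda_{n,n-1}|+|\lambda_{n,n-2}|\le\frac12$ grows with $M$. But $M$ must dominate $\sup_E|V_j|$ for every $j<N(M)$. If you fix $M$ from an initial segment $j<N_1$ and then find $N(M)>N_1$, the indices $N_1\le j<N(M)$ are not covered by the induction step. Enlarging $M$ only pushes $N(M)$ further out, so the choice is circular. Writing $a_n=\sup_E|V_n|$, all you actually have is $a_n\le 1+C'n^{2-s}(1+a_{n-1})\max(a_{n-1},a_{n-2})$. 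This recursion is quadratic in $a$ and does not exclude blow-up when early values are large compared with $n^{s-2}$. In the paper's version the threshold condition $n^{\delta}\ge 4C$ is independent of $\eta$, which is what legitimises choosing the threshold first and the bound afterwards; your version loses exactly this independence.

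\textbf{The fix.} Use the norm in the denominator of $\lambda_{n,n-1}$, not merely a constant lower bound for it. Let $c_{n-1}$ be the coefficient of $z^{n-2}$ in $R_{n-1}$. Since $\mu_1'\ge\frac{1-|\alpha|}{1+|\alpha|}$ on $\mathbb{T}$, Parseval gives $\|R_{n-1}\|^2_{\frac{\lambda}{n^s}}\ge\|R_{n-1}\|^2_{L^2(d\mu_1)}\ge\frac{1-|\alpha|}{1+|\alpha|}\,(1+|c_{n-1}|^2)$. Meanwhile the bracket in $\lambda_{n,n-1}$ is $O\big(n^2(1+|c_{n-1}|)\big)$. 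Because $(1+x)/(1+x^2)$ is bounded on $[0,\infty)$, this gives $|\lambda_{n,n-1}|\le Kn^{2-s}$ with $K$ depending only on $\alpha,\beta,\lambda$, independently of any bound on the $V_j$. You can then fix $N$ first and choose $M$ afterwards from $V_0,\dots,V_{N-1}$, and the rest of your argument goes through unchanged.
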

\begin{proof}
		From \eqref{sa   4.4}, we can write
\begin{align*}
			A_n \leq (1+\left|\alpha\right|)+ \left|\lambda_{n,n-1}\right| A_{n-1}+ \left|\lambda_{n,n-2} \right| A_{n-2},
\end{align*}
where $A_n:= \max_{|z|=1} \left| R_n(s; z) \right|$. As observed in Remark \ref{asymptotic behaviour of the connection coefficients }, for $s>2$, $\lambda_{n,n-1}\to 0$ and  $\lambda_{n,n-2} \to 0$, from which we can write $\left|\lambda_{n,n-1}\right| +\left|\lambda_{n,n-2}\right| \leq \dfrac{C}{n^{\delta}}$, where $\delta >0$ is fixed and $C$ is independent of $n$. Now we claim that $A_n \leq \eta $, for all $n$. Note that $A_{0}=1$ and $A_1 \leq 1+\left| \alpha\right|$. We assume $A_j \leq \eta $, for all $j= 0,1,\ldots, n-1$. Then $A_n \leq C_0 + \dfrac{2C\eta}{n^{\delta}}$, where $C_0 := 1+\left| \alpha \right|$. Since $\dfrac{2C\eta}{n^{\delta}} \to 0$, there exists $N_0$, such that, for all $n \geq N_0$ ,
\begin{align*}
			\dfrac{2C\eta}{n^{\delta}} \leq \dfrac{\eta}{2}.
\end{align*}
This implies $A_n \leq C_0 + \dfrac{\eta}{2}$.  Now, if we choose $\eta \geq 2 C_0$, then for all $n \geq N_0$, we conclude $A_n \leq \eta$.  Now, if we define $\eta \geq  \max \{ A_0, A_1,\ldots, A_{N_{0}-1}, 2C_0\}$, then all the cases are covered explicitly. Hence, the claim is proved. Now we define $E_n(z):=R_n(s>2; z)- \Phi_{n}(z;d\mu_1)$. Since, on $|z|=1$, we have $\left| \Phi_{n}(z;d\mu_1)\right|> 1-|\alpha|:=\epsilon$, and $\left| E_n(z)\right|\leq \dfrac{2C\eta}{n^{\delta}}$, we choose $N_0$, such that $\dfrac{\eta}{2}<\epsilon$ and $\epsilon$ is independent of $n$. Then on $|z|=1$, we have
\begin{align*}
			\left| R_n(s>2; z) - \Phi_{n}(z; d\mu_1)\right| < \left| \Phi_n(z;d\mu_1)\right|.
\end{align*}
By Rouche's theorem,  $R_n(s>2; z)$ and $\Phi_n(z;d\mu_1)$ have the same number of zeros inside $\mathbb{D}$. Hence, all the zeros of $R_n(s>2; z)$ lie in $\mathbb{D}$, for all $n \geq N_0$.
\end{proof}
To obtain the location of the zeros of $R_n(2; z)$, we employ some restrictions on the coefficients of $R_n(2; z)$, $\left|z\right|=1$ and prove the following result.
\begin{theorem}\label{Location of  zeros of R_n, s=2 case}
Let $\{ R_n(2; z)\}_{n\geq 0}$ be the sequence of Sobolev orthogonal polynomials, defined recursively by \eqref{sa   4.4}, for $s=2$ case.  Define, for $|z|=1$,
\begin{align*}
			a_n(z):= \dfrac{R_{n-1}(2; z)}{\Phi_{n}(z;d\mu_1)} \quad  {\mbox{and}} \quad b_n(z):= \dfrac{R_{n-2}(2; z)}{\Phi_n(z;d\mu_1)}.
\end{align*}
		If $\sup_{|z|=1} \left| \lambda_{n,n-1} a_n(z)+ \lambda_{n,n-2} b_n(z)\right| <1$, then all the zeros of $R_n(2; z)$ lie in $\mathbb{D}$.
\end{theorem}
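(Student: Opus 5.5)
The natural route is Rouché's theorem on $|z|=1$, comparing $R_n(2;z)$ with $\Phi_n(z;d\mu_1)$, exactly as in the proof of Theorem \ref{Location of zeros of R_n, s>2 case}. The difference is that the smallness of the perturbation is now supplied by the hypothesis instead of by the decay of $\lambda_{n,n-1},\lambda_{n,n-2}$.

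\textbf{Rewriting the connection formula.} By \eqref{sa   4.4} with $s=2$,
\begin{align*}
R_n(2;z)=\Phi_n(z;d\mu_1)-\lambda_{n,n-1}R_{n-1}(2;z)-\lambda_{n,n-2}R_{n-2}(2;z).
\end{align*}
Before dividing by $\Phi_n(z;d\mu_1)$ I will check that it has no zeros on $|z|=1$. For $\mu_1=\frac{1-|\alpha|^2}{|z-\alpha|^2}\,dm$ the monic orthogonal polynomials are explicitly $\Phi_n(z;d\mu_1)=z^{n-1}(z-\alpha)$ for $n\ge1$ (and $\Phi_0=1$). This is the Bernstein--Szeg\H{o} structure already used in \eqref{sa 4.2}. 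Hence on $|z|=1$,
\begin{align*}
|\Phi_n(z;d\mu_1)|=|z-\alpha|\ge 1-|\alpha|>0 .
\end{align*}
So $a_n$ and $b_n$ are well defined and continuous there, and dividing gives
\begin{align*}
\frac{R_n(2;z)}{\Phi_n(z;d\mu_1)}=1-\bigl(\lambda_{n,n-1}a_n(z)+\lambda_{n,n-2}b_n(z)\bigr),\qquad |z|=1 .
\end{align*}

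\textbf{Applying Rouché.} Multiplying back, the hypothesis gives, for every $z$ on $|z|=1$,
\begin{align*}
|R_n(2;z)-\Phi_n(z;d\mu_1)|=|\Phi_n(z;d\mu_1)|\,\bigl|\lambda_{n,n-1}a_n(z)+\lambda_{n,n-2}b_n(z)\bigr|<|\Phi_n(z;d\mu_1)| .
\end{align*}
The inequality is strict pointwise because the supremum is strictly less than $1$. Rouché's theorem (in the symmetric form $|f-g|<|g|$ on the contour) then says $R_n(2;z)$ and $\Phi_n(z;d\mu_1)$ have the same number of zeros in $\mathbb{D}$, counted with multiplicity. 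The strict inequality also shows $R_n(2;z)\neq0$ on $|z|=1$.

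\textbf{Counting.} All $n$ zeros of $\Phi_n(z;d\mu_1)=z^{n-1}(z-\alpha)$ lie in $\mathbb{D}$. More generally, zeros of OPUC always lie in $\mathbb{D}$. Since $R_n(2;z)$ is monic of degree $n$, all of its $n$ zeros lie in $\mathbb{D}$.

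\textbf{Main obstacle.} The only delicate point is justifying that $\Phi_n(z;d\mu_1)$ does not vanish on $\mathbb{T}$, so that $a_n$ and $b_n$ make sense. I would settle this with the explicit Bernstein--Szeg\H{o} form, or alternatively with the general fact that OPUC have all their zeros in the open disc. Everything else is a direct application of Rouché's theorem.
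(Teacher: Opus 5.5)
Your proposal is correct and is essentially the paper's proof. You split $R_n(2;z)=\Phi_n(z;d\mu_1)+g(z)$ with $g=-\lambda_{n,n-1}R_{n-1}(2;z)-\lambda_{n,n-2}R_{n-2}(2;z)$, and apply Rouch\'e's theorem on $|z|=1$, where the hypothesis gives $|g|<|\Phi_n(z;d\mu_1)|$. Your explicit check that $\Phi_n(z;d\mu_1)=z^{n-1}(z-\alpha)$ does not vanish on $\mathbb{T}$ and has all $n$ zeros in $\mathbb{D}$ fills in two points the paper uses only implicitly.
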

\begin{proof}
		From \eqref{sa   4.4}, we can write
\begin{align*}
			R_n(2; z)= \Phi_{n}(z;d\mu_1) \left(1-\lambda_{n,n-1} a_n(z) -\lambda_{n,n-2} b_n(z)\right).
\end{align*}
Set $f(z)= \Phi_n(z;d\mu_1)$, $g(z):= -\lambda_{n,n-1} R_{n-1}(2; z) -\lambda_{n,n-2} R_{n-2}(2; z)$. Then on $|z|=1$, we have
\begin{align*}
			\dfrac{\left| g(z)\right|}{\left|f(z)\right|}= \left| \lambda_{n,n-1} a_n(z) +\lambda_{n,n-2} b_n(z)\right|.
\end{align*}
By hypothesis, $\left| g(z)\right| < \left| f(z)\right|$ for all $|z|=1$. Hence, by Rouche's theorem, $R_n(2; z)= f(z)+g(z)$ and $\Phi_n(z;d\mu_1)$ have the same number of zeros in $\mathbb{D}$. This justifies the fact that $R_n(2; z)$ has all its zeros in $\mathbb{D}$.
\end{proof}

To exhibit the results numerically, we obtain  $\sup_{|z|=1}|\lambda_{n,n-1}a_n+\lambda_{n,n-2}b_n|$
	for $s=2$, in the following table:
\begin{table}[h!]
		\centering
		\begin{tabular}{|c|c|c|c|c|c|}
			\hline
			$n$  & $1$ &$2$ & $3$ & $4$ & $5$ \\ \hline
			$\sup$ value & -- & $0.325000$ & $0.451441$ & $0.494901$ & $0.511366$ \\ \hline
			$n$ & $6$ & $7$ & $8$ & $9$ & $10$\\ \hline
			$\sup$ value & $0.519920$ & $0.525898$ & $0.530718$ & $0.534789$ & $0.538118$ \\ \hline
		\end{tabular}
		\caption{Rouché sup values showing
			$\sup_{|z|=1}|\lambda_{n,n-1}a_n+\lambda_{n,n-2}b_n|<1$
			for the $s=2$ case.}
\end{table}
Clearly the position of the zeros in Figure \ref{Location of zeros for Rn(z), s>2} and  Figure \ref{Location of zeros for Rn(z),  s=2} show that the zeros lie inside $\mathbb{D}$, supporting Theorem \ref{Location of zeros of R_n, s>2 case} and Theorem \ref{Location of  zeros of R_n, s=2 case}. It is also evident from the graphical illustration that the common zero is at $0.5$.


\begin{minipage}{1.0\linewidth}
		\centering
		\begin{minipage}{0.40\linewidth}
			\begin{figure}[H]
				\includegraphics[width=\linewidth]{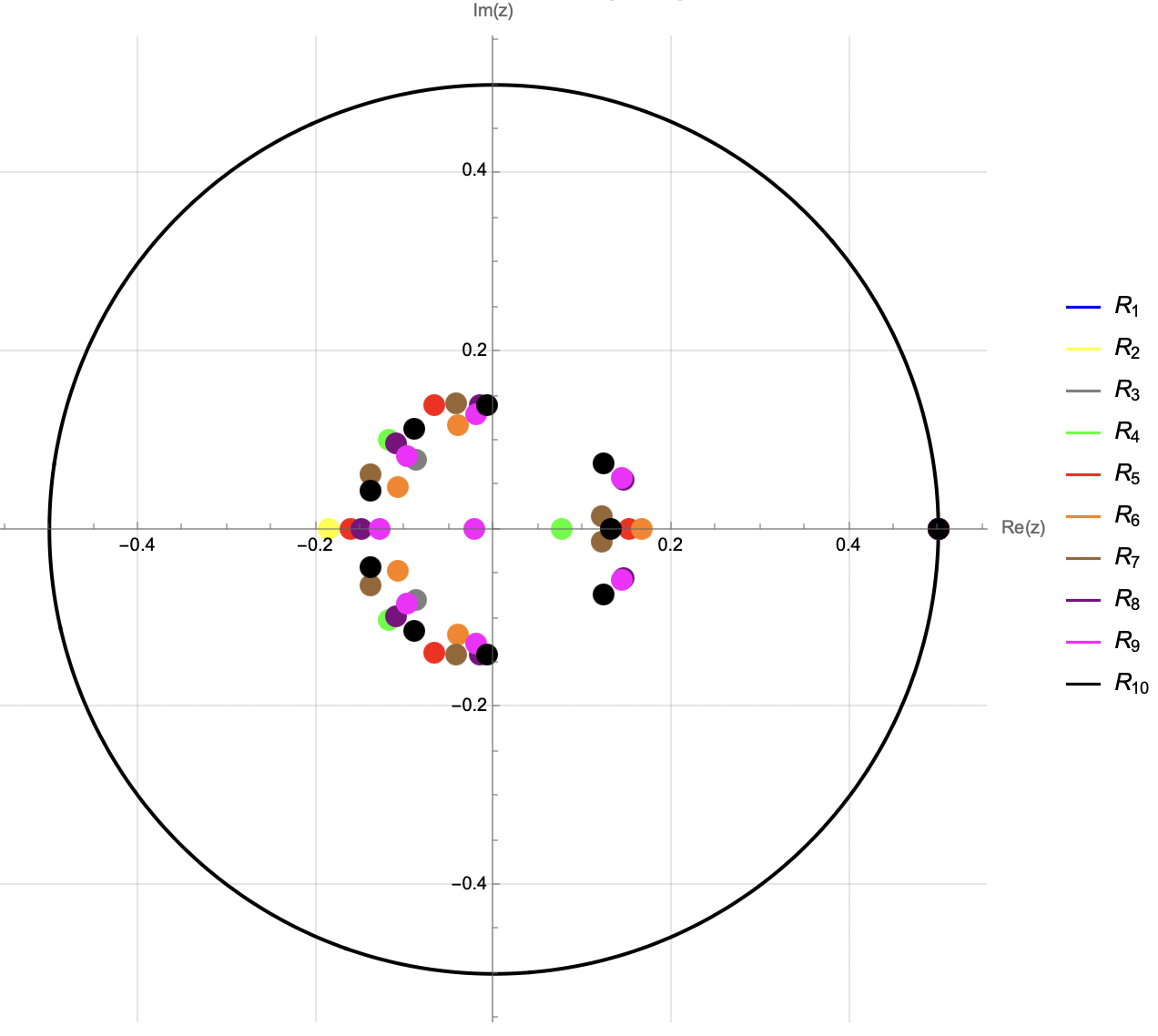}
                \captionof{figure}{Zeros of $R_1(3; z)$, \ldots, $R_{10}(3; z)$}
				\label{Location of zeros for Rn(z), s>2}
			\end{figure}
		\end{minipage}
		\hspace{0.08\linewidth}
		\begin{minipage}{0.40\linewidth}
			\begin{figure}[H]
				\includegraphics[width=\linewidth]{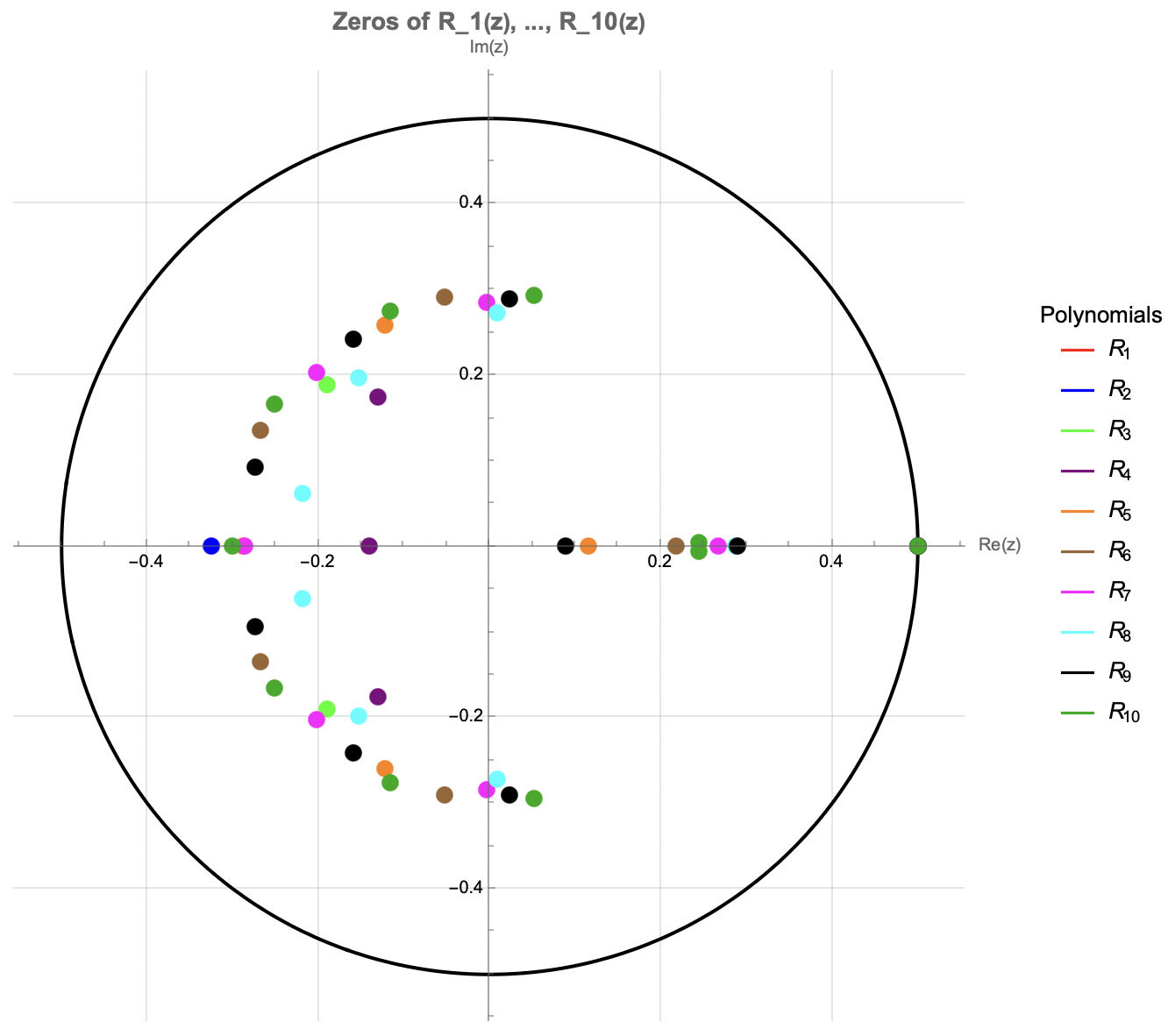}
                \captionof{figure}{Zeros of $R_1(2; z), \ldots, R_{10}(2; z)$}
				\label{Location of zeros for Rn(z),  s=2}
			\end{figure}
		\end{minipage}
\end{minipage}

{${}$}
\newline
{${}$}
We complement the theoretical analysis with a numerical study of the asymptotic behavior established in Sections $2$ and $3$, as well as of the asymptotic relation \eqref{sa thm4.1 3} in Theorem \ref{distribution of zeros for s=2, s>2 case} of Section $4$, for the recursively defined polynomials $R_n(s;z)$ introduced in Proposition \ref{connection formula}.
	Throughout, we fix parameters $\alpha =\frac{1}{2}, \beta = 2, \lambda =1 $. To analyze the $s>2$ case, we choose a particular value $s=3$ in our analysis and generate the sequence $\left\{R_n(s; z)\right\}_{n\geq 0}$ up to degree $n=15$ for both $s=3$ and $s=2$ cases.
\subsubsection{\bf{\underline{Strong Asymptotic}}}
\begin{itemize}
		\item[\rm (1)] \textbf{Asymptotic comparison:} For $s=3$, the curves $\text{Re}[\dfrac{R_n(s; z)}{z^n}]$ and limiting function are close but not identical. The gap remains small across the domain shown. However, it does not visibly shrink to zero for $n$. This suggests the existence of good pointwise approximation, but does not establish convergence. For $s=2$ case, there is a clear and persistent gap between $\text{Re}[\dfrac{R_n(s; z)}{z^n}]$ and the limit. This shows that $n\leq15$ is not large enough to make the approximation visually close.
\begin{figure}[htbp]
		\centering
		\begin{minipage}{0.80\textwidth}
			\centering
			\begin{subfigure}{0.43\textwidth}
				\centering
				\includegraphics[width=\linewidth]{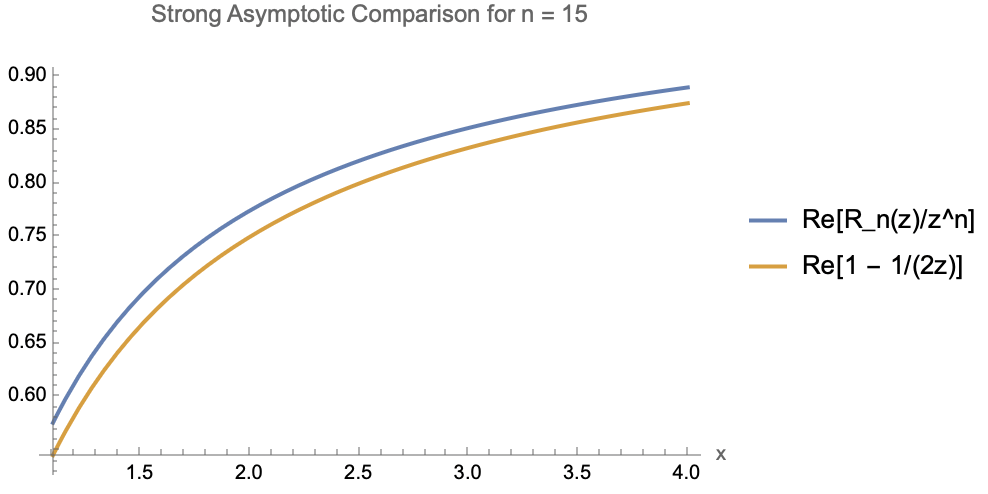}
				\label{fig:SA-comparison-s3}
			\end{subfigure}
			\hfill
			\begin{subfigure}{0.43\textwidth}
				\centering
				\includegraphics[width=\linewidth]{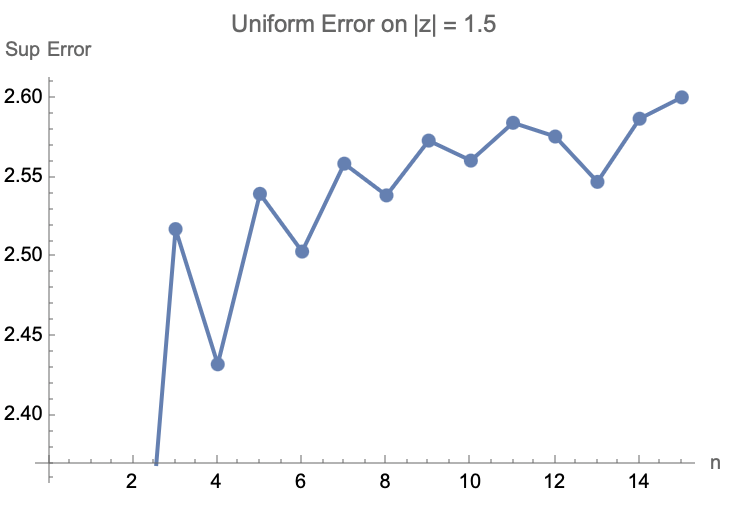}
				\label{fig:SA-uniform-error-s3}
			\end{subfigure}
			
			\vspace{-5 mm}
			
			\begin{subfigure}{0.43\textwidth}
				\centering
				\includegraphics[width=\linewidth]{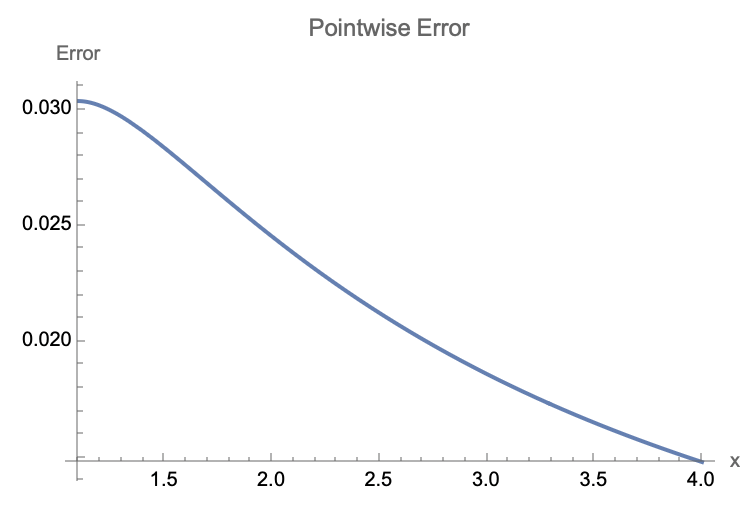}
				\label{fig:SA-pointwise-error-s3}
			\end{subfigure}
		\end{minipage}
		\vspace{-5mm}
		\caption{Strong asymptotic plots for the case $s=3$.}
		\label{Fig:SA-main-s3}
\end{figure}
\item[\rm (2)] \textbf{Supremum error:} For $s=3$, the supremum error, defined by
\begin{align*}
			E_n(s; r)= \sup_{|z|=r} \left| \dfrac{R_n(s; z)}{z^n}-f(z)\right|
\end{align*}
remains bounded and relatively stable as $n$ increases where we choose $r=1.5$. The computed error remains bounded on the tested circle $\left| z\right|=1.5$. There is no evidence of divergence or instability. However, since the error does not decrease towards zero, this does not imply uniform convergence. For $s=2$, the supremum error increases initially and then stabilizes at a larger value than in the $s=3$ case. So the approximation remains bounded and the error level is significantly higher and accuracy deteriorates compared to $s=3$.
\item[\rm (3)] \textbf{Pointwise error:} The pointwise error given by
\begin{align*}
	E_n(s;x)=\left| \dfrac{R_{n+1}(s; x)}{z^n}-f(x) \right|, \quad x>1
\end{align*}  decreases as $x$ increases for $s=3$ case. The approximation improves certain regions of the domain.  For $s=2$, we observed much weaker decay compared to $s=3$. The error starts large and decreases slowly and have persistent approximation error across the domain.
\end{itemize}
\begin{figure}[htbp]
	\centering
	\begin{minipage}{0.80\textwidth}
		\centering
		\begin{subfigure}{0.43\textwidth}
			\centering
			\includegraphics[width=\linewidth]{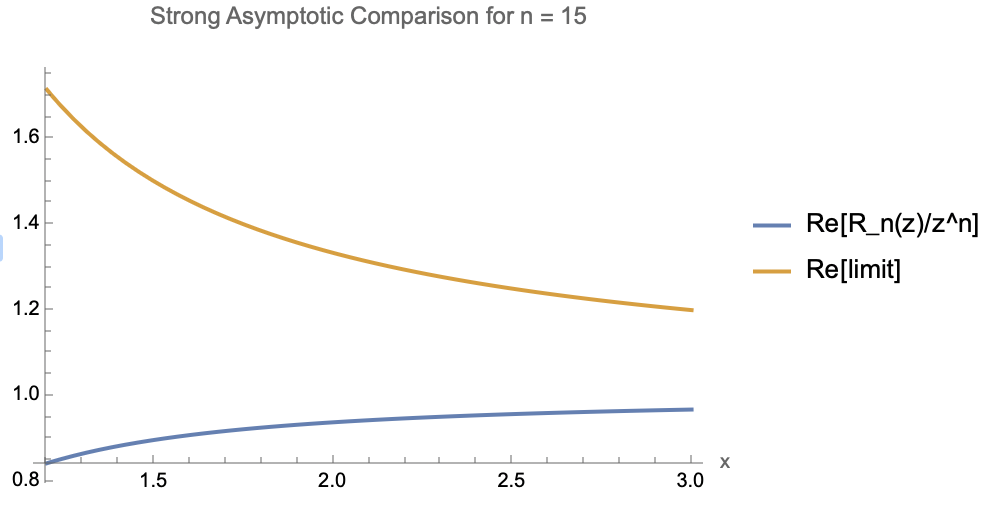}
			\label{fig:SA-comparison-s2}
		\end{subfigure}
		\hfill
		\begin{subfigure}{0.43\textwidth}
			\centering
			\includegraphics[width=\linewidth]{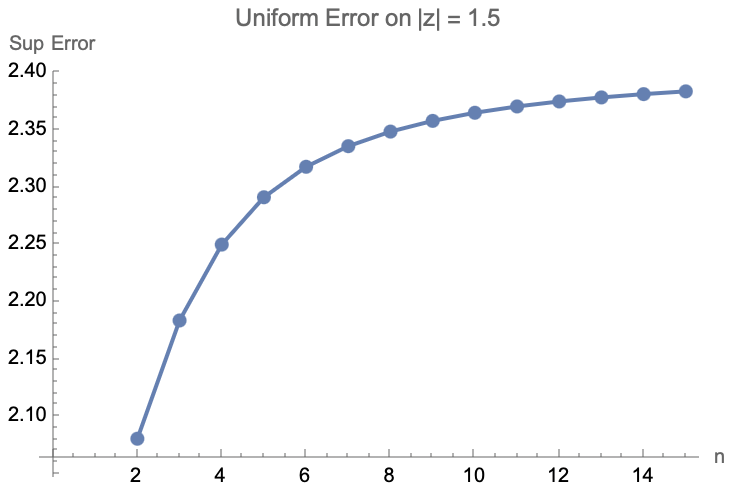}
			\label{fig:SA-uniform-error-s2}
		\end{subfigure}
		
		\vspace{-5 mm}
		
		\begin{subfigure}{0.43\textwidth}
			\centering
			\includegraphics[width=\linewidth]{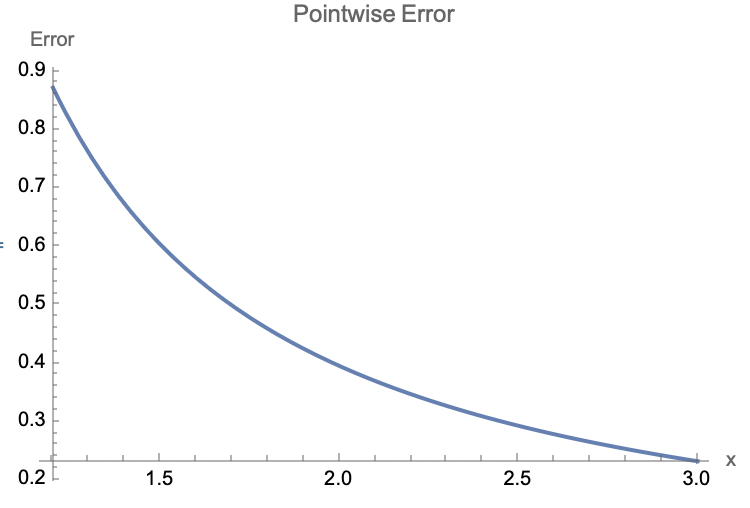}
			\label{fig:SA-pointwise-error-s2}
		\end{subfigure}
	\end{minipage}
	\vspace{-5mm}
	\caption{Strong asymptotic plots for the case $s=2$.}
	\label{Fig:SA-main-s2}
\end{figure}
\subsubsection{\bf{\underline{Ratio Asymptotic}}}
\begin{itemize}
\item[\rm (1)] \textbf{Ratio convergence:} The plots of $\text{Re} \left( \dfrac{R_{n+1}(s; z)}{R_n(s; z)}\right)$ against $x$ shows that, for $x>1$, the ratio closely follows the identity function $x$. This provides clear numerical evidence that $\dfrac{R_{n+1}(s; z)}{R_n(s; z)} \to z$ for $|z|>1$. This approximation is already visible at moderate degree $n=14$, and holds in both cases of  $s=2$ and $s=3$.
\begin{figure}[htbp]
	\centering
	\begin{minipage}{0.80\textwidth}
		\centering
		\begin{subfigure}{0.43\textwidth}
			\centering
			\includegraphics[width=\linewidth]{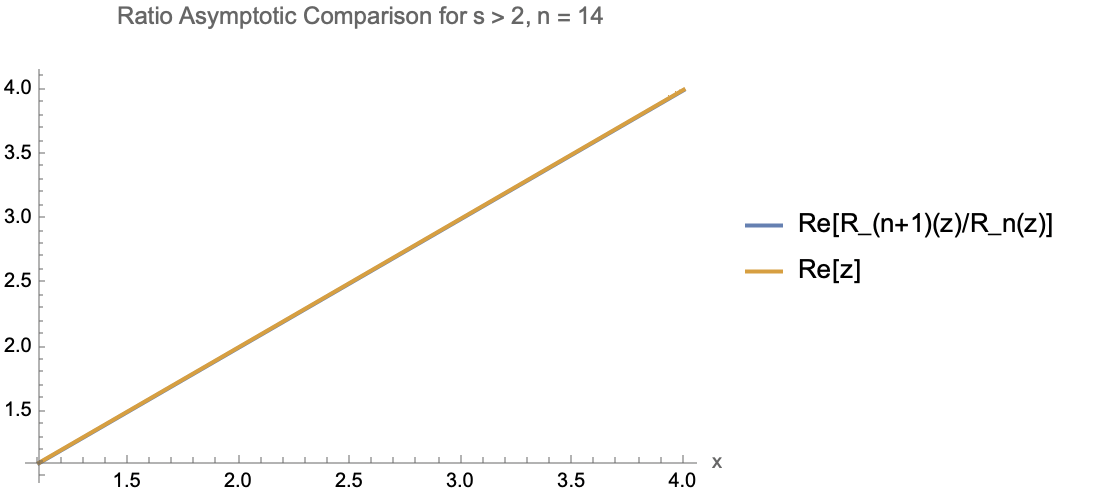}
			\label{fig:RA-comparison-s3}
		\end{subfigure}
		\hfill
		\begin{subfigure}{0.43\textwidth}
			\centering
			\includegraphics[width=\linewidth]{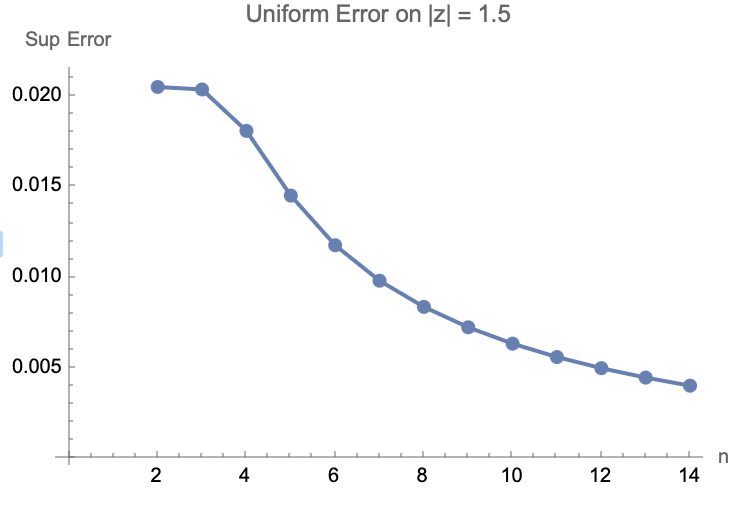}
			\label{fig:RA-uniform-error-s3}
		\end{subfigure}
		
		\vspace{-5 mm}
		
		\begin{subfigure}{0.43\textwidth}
			\centering
			\includegraphics[width=\linewidth]{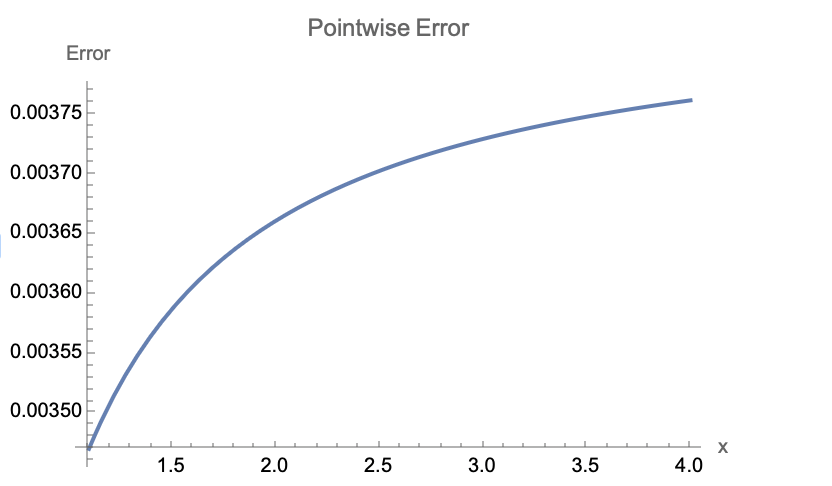}
			\label{fig:RA-pointwise-error-s3}
		\end{subfigure}
	\end{minipage}
	\vspace{-5mm}
	\caption{Ratio asymptotic plots for the case $s=3$.}
	\label{Fig:RA-main-s3}
\end{figure}
\item[\rm (2)] \textbf{Uniform error:} We computed the supremum error
\begin{align*}
E_n(s; r)= \sup_{|z|=r} \left| \dfrac{R_{n+1}(s; z)}{R_n(s; z)}-z \right|
\end{align*}
with $r=1.5$. The numerical results show $E_n(s; r)$ decreases monotonically as $n$ increases. The decay is smooth and stable  no oscillatory or unstable behavior is observed for both $s=3$ and $s=2$ cases. This provides numerical evidence for uniform convergence on the tested circle, consistent with the theoretical results for the uniform convergence on compact subsets of $|z|>1$.

\begin{figure}[htbp]
\centering
\begin{minipage}{0.80\textwidth}
\centering
\begin{subfigure}{0.43\textwidth}
	\centering
	\includegraphics[width=\linewidth]{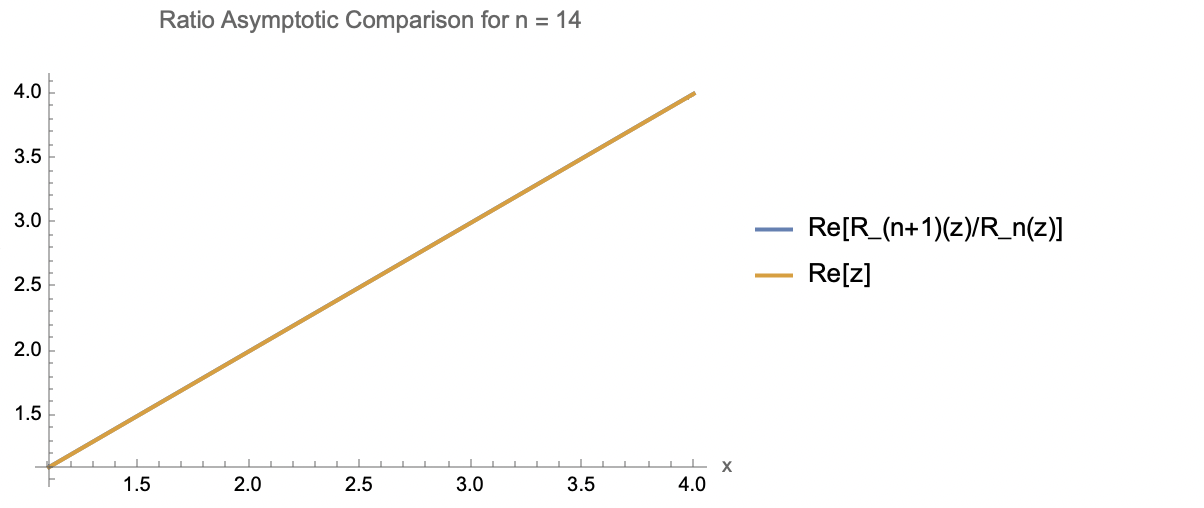}
	\label{fig:RA-comparison-s2}
\end{subfigure}
\hfill
\begin{subfigure}{0.43\textwidth}
	\centering
	\includegraphics[width=\linewidth]{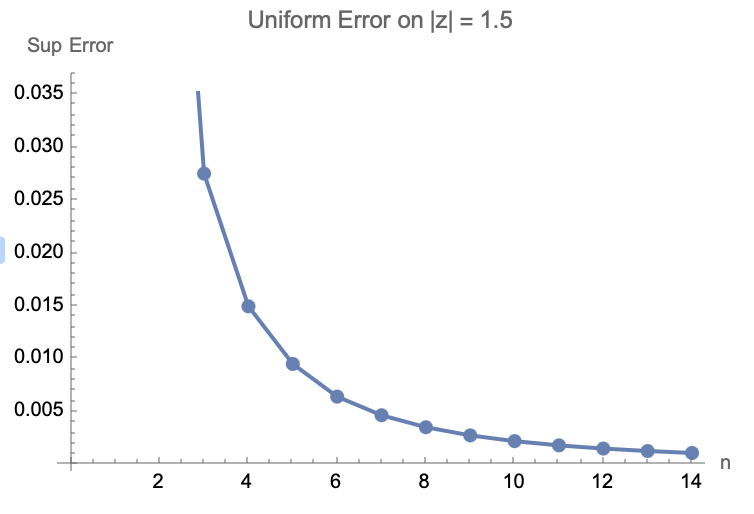}
	\label{fig:RA-uniform-error-s2}
\end{subfigure}

\vspace{-5 mm}

\begin{subfigure}{0.43\textwidth}
	\centering
	\includegraphics[width=\linewidth]{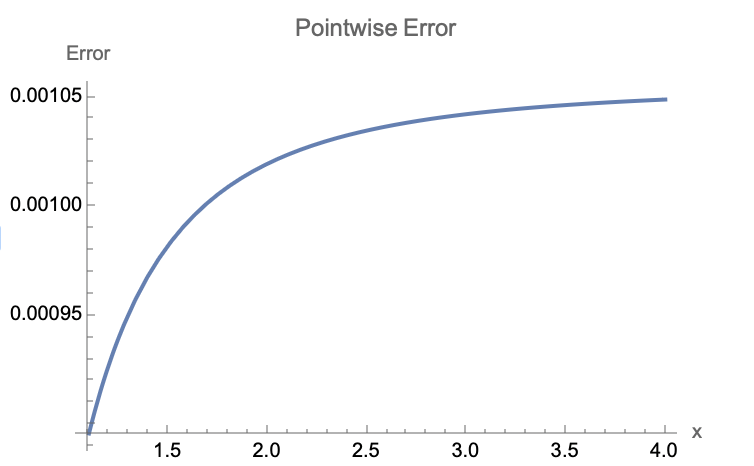}
	\label{fig:RA-pointwise-error-s2}
\end{subfigure}
\end{minipage}
\vspace{-5mm}
\caption{Ratio asymptotic plots for the case $s=2$.}
\label{fig:RA-main-s2}
\end{figure}
\item[\rm (3)] \textbf{Pointwise error:} The pointwise error $\left| \dfrac{R_{n+1}(s; x)}{R_n(s; x)}-x \right|$ remains small and varies smoothly with $x$ for both $s=3$ and $s=2$ case. No numerical instability or divergence is observed in both cases.
\end{itemize}
\subsubsection{\bf{\underline{Logarithmic Asymptotic}}}
\begin{itemize}
\item[\rm (1)] \textbf{Logarithmic asymptotic comparison:} For $x \in [1.1, 4]$, we compare $L_n(s; x):= \dfrac{1}{n}\log|R_n(s; x)|$ with $\log|x|$. In both cases $s=3$ and $s=2$, the graphs show that $L_n(s; x)$ closely follows $\log|x|$ across the entire interval. At $n=15$, the two curves are visually indistinguishable at plotting resolution. This provides strong numerical support for pointwise convergence for real argument outside the unit interval.
\begin{figure}[!h]
		\centering
		\begin{minipage}{0.80\textwidth}
			 \centering
		\begin{subfigure}{0.43\textwidth}
			\centering
			\includegraphics[width=\linewidth]{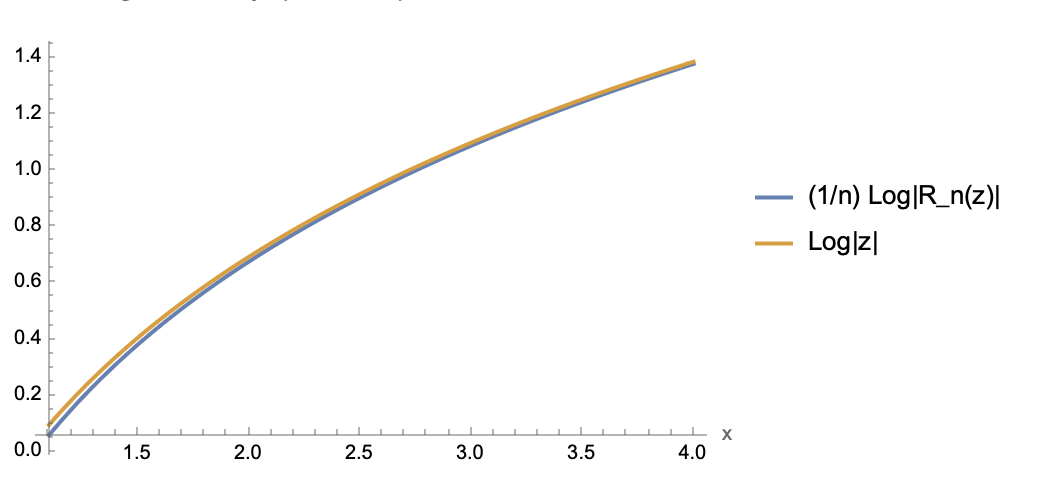}
			\label{fig:LA-comparison-s3}
		\end{subfigure}
		\hfill
		\begin{subfigure}{0.43\textwidth}
			\centering
			\includegraphics[width=\linewidth]{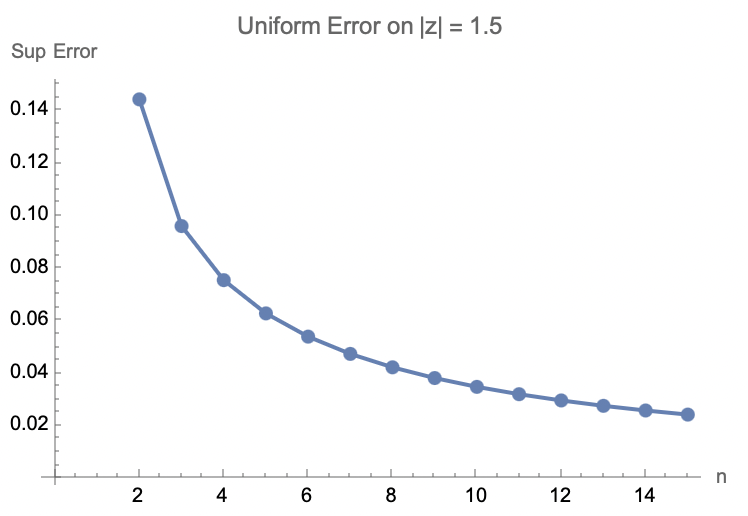}
			\label{fig:LA-uniform-error-s3}
		\end{subfigure}
		\vspace{-5mm}
		\begin{subfigure}{0.43\textwidth}
			\centering
			\includegraphics[width=\linewidth]{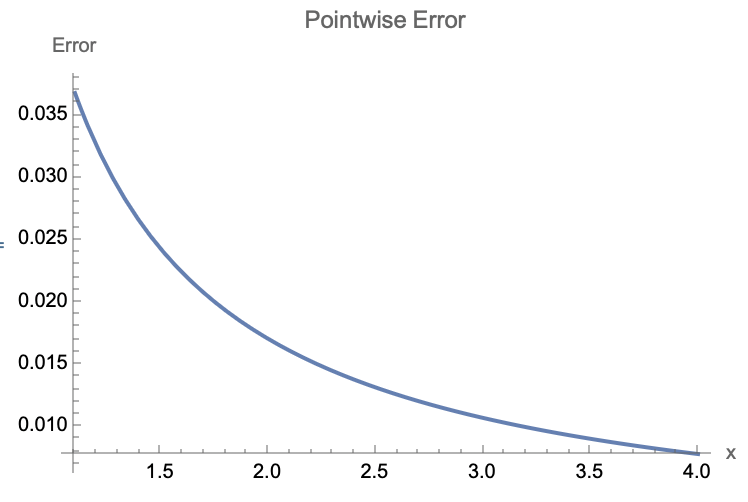}
			\label{fig:LA-pointwise-error-s3}
		\end{subfigure}
	\end{minipage}
		\vspace{-3mm}
		\caption{Logarithmic asymptotic plots for $s=3$.}
		\label{fig:LA-main-s3}
\end{figure}
\item[\rm(2)] \textbf{Uniform error on circles:} We evaluate the supremum error on the circle $|z|=1.5$, which is defined as
\begin{align*}
		E_n(s; 1.5)= \sup_{|z|=1.5}\left| \dfrac{1}{n} \log|R_n(s; z)| -\log|z|\right|.
\end{align*}
	In both cases, the numerical results show a monotone decrease of $E_n(s; 1.5)$ as $n$- increases. For $s=2$ the error decreases steadily and for $s=3$, the error also decreases, starting from a larger initial value but exhibiting constant decay. \newline These observations provide  numerical evidence of convergence in the supremum norm on the circle  $|z|=1.5$.
\begin{figure}[!hbtp]
		\centering
			\begin{minipage}{0.85\textwidth}
			\centering
		\begin{subfigure}{0.43\textwidth}
			\centering
			\includegraphics[width=\linewidth]{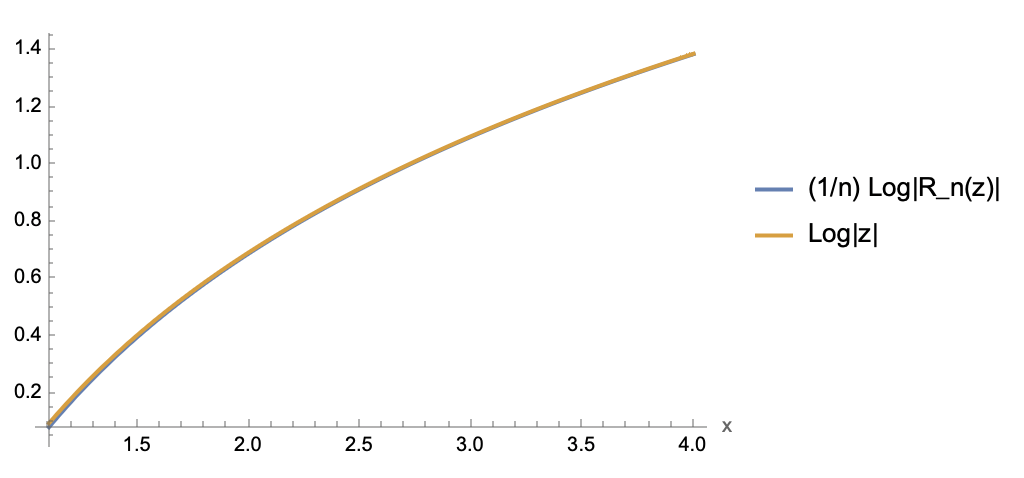}
			\label{fig:LA-comparison-s2}
		\end{subfigure}\hfill
		\begin{subfigure}{0.43\textwidth}
			\hspace{0.02\textwidth}
			\centering
			\includegraphics[width=\linewidth]{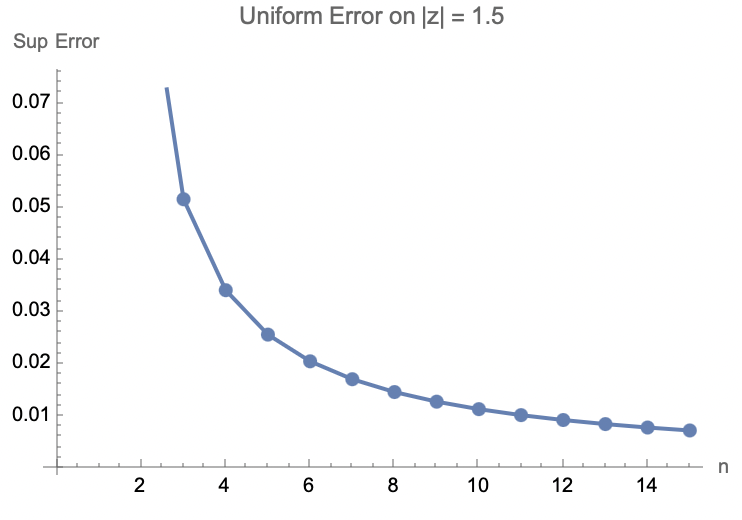}
			\label{fig:LA-uniform-error-s2}
		\end{subfigure}
		\vspace{-3mm}
		\begin{subfigure}{0.43\textwidth}
			\centering
			\includegraphics[width=\linewidth]{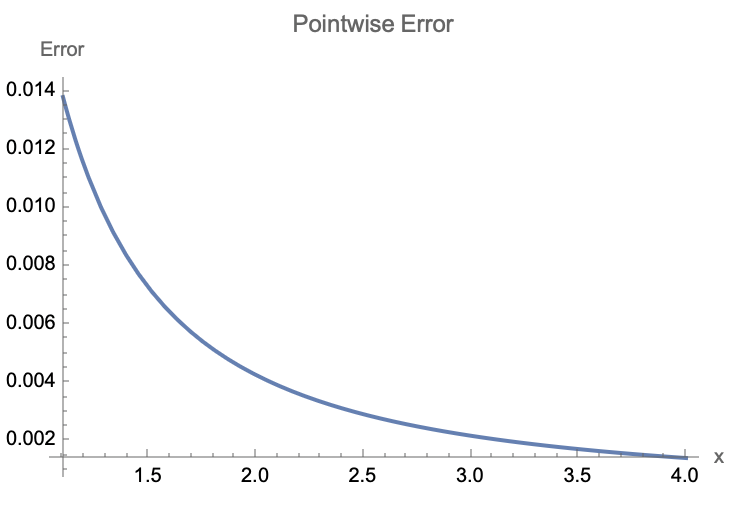}
			\label{fig:LA-pointwise-error-s2}
		\end{subfigure}
	\end{minipage}
		\vspace{-5mm}
		\caption{Logarithmic asymptotic plots for $s=2$.}
		\label{fig:LA-main-s2}
	\end{figure}
\item[\rm (3)] \textbf{Pointwise error behavior:} We consider the pointwise error
\begin{align*}
	E_n(s; x):= \left|\dfrac{1}{n} \log|R_n(s; x)| - \log|x|\right|, \quad x>1.
\end{align*}
The numerical plots shows that, for both $s=2$ and $s=3$, the error remains small across the interval $x \in [1.1, 4]$. No instability or irregular behavior is observed and the error varies smoothly as a function of $x$. The agreement between $\dfrac{1}{n} \log|R_n(s; x)|$ and $ \log|x|$ is consistently strong throughout the interval. These observation provide numerical support for pointwise convergence, without indicating an anomalous behavior in the considered range.
\end{itemize}

\noindent
{\bf{Acknowledgement.}}
This work was supported by the Project No. NBHM/RP-1/2019 of National Board for Higher Mathematics (NBHM), DAE, Government of India.

\noindent
{\bf{\underline{Data Availability}}}. Data sharing not applicable to this article as no datasets were generated or analysed during the current study.

\noindent
{\bf{\underline{Conflict of Interest}}}. There is no conflict of interest related to this manuscript.

\noindent
{\bf{\underline{Funding}}}. 
This work was supported by the Project No. NBHM/RP-1/2019 of National Board for Higher Mathematics (NBHM), DAE, Government of India.

\noindent
{\bf{\underline{Competing Interest}}}. The authors have no relevant financial or non-financial interests to disclose.

\noindent
{\bf{\underline{Authors contribution}}}. All authors contributed equally. All authors read and approved the final manuscript.

\end{document}